\numberwithin{equation}{subsection}
\newtheorem{theo}{Theorem}[section]
\newtheorem{lem}[theo]{Lemma}
\newtheorem{prop}[theo]{Proposition}
\newtheorem{coro}[theo]{Corollary}
\newtheorem{defi}[theo]{Definition}
\newtheorem{rem}[theo]{Remark}
\let\olddefi\defi
\renewcommand{\defi}{\olddefi\normalfont}
\let\oldrem\rem
\renewcommand{\rem}{\oldrem\upshape}
\def\mathrmdef#1{\expandafter\def\csname#1\endcsname{{\rm#1}}}
\def\mathsfdef#1{\expandafter\def\csname#1\endcsname{{\rm\sf#1}}}
\def\mathcaldef#1{\expandafter\def\csname#1\endcsname{{\mathcal#1}}}
\def\t{\mathtt{t}}
\def\AAA{\mathbb{A}}
\def\dd{\mathbbmss{d}}
\def\DD{\mathbbm{d}}
\def\ss{\mathbbmss{s}}
\def\uu{\mathbbmss{u}}
\def\ueta{\overline{\upeta} }
\def\etta{\uprho }
\def\terminall{\iota }
\def\d{\delta }
\def\D{\partial }
\def\s{\mathfrak{s} }
\def\umm{{\underline{\mathsf{1} } }}
\def\doiss{{\underline{\mathsf{2} } }}
\def\tress{{\underline{\mathsf{3} } }}
\def\um{\mathfrak{1} }
\def\dois{\mathfrak{2} }
\def\zero{\mathfrak{0}}
\def\AAAA{\mathcal{A}}
\def\BBBB{\mathcal{B}}
\def\FFFF{\mathcal{F}}
\def\WWWW{\mathcal{W}}
\def\ella{{\ell _ \ast}}
\def\H{ \mathcal{H}}
\def\SSS{\mathbb{S}}
\def\SSSS{\mathtt{S}}
\def\RRRR{\mathcal{R}}
\begin{document}  
	
\ifluatex
\catcode`\^^J=10
\directlua{dofile "dednat6load.lua"}
\else
%
\def\diagxyto{\ifnextchar/{\toop}{\toop/>/}}
\def\to     {\rightarrow}
\def\defded#1#2{\expandafter\def\csname ded-#1\endcsname{#2}}
\def\ifdedundefined#1{\expandafter\ifx\csname ded-#1\endcsname\relax}
\def\ded#1{\ifdedundefined{#1}
    \errmessage{UNDEFINED DEDUCTION: #1}
  \else
    \csname ded-#1\endcsname
  \fi
}
\def\defdiag#1#2{\expandafter\def\csname diag-#1\endcsname{\bfig#2\efig}}
\def\defdiagprep#1#2#3{\expandafter\def\csname diag-#1\endcsname{{#2\bfig#3\efig}}}
\def\ifdiagundefined#1{\expandafter\ifx\csname diag-#1\endcsname\relax}
\def\diag#1{\ifdiagundefined{#1}
    \errmessage{UNDEFINED DIAGRAM: #1}
  \else
    \csname diag-#1\endcsname
  \fi
}
\newlength{\celllower}
\newlength{\lcelllower}
\def\cellfont{}
\def\lcellfont{}
\def\cell #1{\lower\celllower\hbox to 0pt{\hss\cellfont${#1}$\hss}}
\def\lcell#1{\lower\celllower\hbox to 0pt   {\lcellfont${#1}$\hss}}
\def\expr#1{\directlua{output(tostring(#1))}}
\def\eval#1{\directlua{#1}}
\def\pu{\directlua{pu()}}
%

\defdiag{truncatedpseudocosimplicialcategory}{   
  \morphism(1200,0)|m|/->/<-1200,0>[{\AAAA\left(\mathsf{2}\right)}`{\AAAA\left(\mathsf{1}\right)};{\AAAA\left(s^0\right)}]
  \morphism(0,0)|m|/{@{->}@/^25pt/}/<1200,0>[{\AAAA\left(\mathsf{1}\right)}`{\AAAA\left(\mathsf{2}\right)};{\AAAA\left(d^0\right)}]
  \morphism(0,0)|m|/{@{->}@/_25pt/}/<1200,0>[{\AAAA\left(\mathsf{1}\right)}`{\AAAA\left(\mathsf{2}\right)};{\AAAA\left(d^1\right)}]
  \morphism(1200,0)|m|/->/<1200,0>[{\AAAA\left(\mathsf{2}\right)}`{\AAAA\left(\mathsf{3}\right)};{\AAAA\left(d^1\right)}]
  \morphism(1200,0)|m|/{@{->}@/^25pt/}/<1200,0>[{\AAAA\left(\mathsf{2}\right)}`{\AAAA\left(\mathsf{3}\right)};{\AAAA\left(d^0\right)}]
  \morphism(1200,0)|m|/{@{->}@/_25pt/}/<1200,0>[{\AAAA\left(\mathsf{2}\right)}`{\AAAA\left(\mathsf{3}\right)};{\AAAA\left(d^2\right)}]
}
\defdiag{truncatedpseudocosimplicialcategoryinducedbyamorphisminabifibredcategory}{   
  \morphism(1200,0)/->/<-1200,0>[{\FFFF\left(w\times_qw\right)}`{\FFFF\left(w\right)};]
  \morphism(0,0)|a|/{@{->}@<7pt>}/<1200,0>[{\FFFF\left(w\right)}`{\FFFF\left(w\times_qw\right)};{\FFFF(\pi^w)}]
  \morphism(0,0)|b|/{@{->}@<-7pt>}/<1200,0>[{\FFFF\left(w\right)}`{\FFFF\left(w\times_qw\right)};{\FFFF(\pi_w)}]
  \morphism(1200,0)/->/<1200,0>[{\FFFF\left(w\times_qw\right)}`{\FFFF\left(w\times_qw\times_qw\right)};]
  \morphism(1200,0)/{@{->}@<7pt>}/<1200,0>[{\FFFF\left(w\times_qw\right)}`{\FFFF\left(w\times_qw\times_qw\right)};]
  \morphism(1200,0)/{@{->}@<-7pt>}/<1200,0>[{\FFFF\left(w\times_qw\right)}`{\FFFF\left(w\times_qw\times_qw\right)};]
}
\defdiag{twodimensionalcokerneldiagram}{   
  \morphism(1200,0)/->/<-1200,0>[{b\uparrow_pb}`{b};]
  \morphism(0,0)|a|/{@{->}@<7pt>}/<1200,0>[{b}`{b\uparrow_pb};{\delta_{p\uparrow{p}}^0}]
  \morphism(0,0)|b|/{@{->}@<-7pt>}/<1200,0>[{b}`{b\uparrow_pb};{\delta_{p\uparrow{p}}^1}]
  \morphism(1200,0)/->/<1200,0>[{b\uparrow_pb}`{b\uparrow_pb\uparrow_pb};]
  \morphism(1200,0)/{@{->}@<7pt>}/<1200,0>[{b\uparrow_pb}`{b\uparrow_pb\uparrow_pb};]
  \morphism(1200,0)/{@{->}@<-7pt>}/<1200,0>[{b\uparrow_pb}`{b\uparrow_pb\uparrow_pb};]
}
\defdiag{Semanticdescentfactorizationofp}{   
  \morphism(0,0)|a|/->/<1200,0>[{e}`{b};{p}]
  \morphism(0,0)/->/<600,-300>[{e}`{\mathrm{lax}\textrm{-}\mathcal{D}\mathrm{esc}\left(\mathcal{H}_p\right)};]
  \morphism(600,-300)/->/<600,300>[{\mathrm{lax}\textrm{-}\mathcal{D}\mathrm{esc}\left(\mathcal{H}_p\right)}`{b};]
}
\defdiag{SemanticFactorizationp}{   
  \morphism(0,0)|a|/->/<1200,0>[{e}`{b};{p}]
  \morphism(0,0)/->/<600,-300>[{e}`{b^\t};]
  \morphism(600,-300)/->/<600,300>[{b^\t}`{b};]
}
\defdiag{pushoutdiagramdefinition}{   
  \morphism(0,0)|a|/->/<0,-450>[{e}`{b_0};{p_0}]
  \morphism(0,0)|a|/->/<900,0>[{e}`{b_1};{p_1}]
  \morphism(0,-450)|l|/->/<900,0>[{b_0}`{b_0\sqcup_{(p_0,p_1)}b_1};{\mathfrak{d}^1_{p_0\sqcup_{e}p_1}}]
  \morphism(900,0)|r|/->/<0,-450>[{b_1}`{b_0\sqcup_{(p_0,p_1)}b_1};{\mathfrak{d}^0_{p_0\sqcup_{e}p_1}}]
}
\defdiag{twocellpushoutdefinitionrightside}{   
  \morphism(675,0)|l|/{@{->}@/_25pt/}/<0,-525>[{b_1}`{y};{h_0}]
  \morphism(675,0)|r|/{@{->}@/^25pt/}/<0,-525>[{b_1}`{y};{h_0'}]
  \morphism(0,0)|l|/->/<0,-525>[{e}`{b_0};{p_0}]
  \morphism(0,0)|a|/->/<675,0>[{e}`{b_1};{p_1}]
  \morphism(0,-525)|r|/->/<675,0>[{b_0}`{y};{h_1}]
  \morphism(98,-262)/=/<180,0>[{\phantom{O}}`{\phantom{O}};]
  \morphism(525,-262)|a|/=>/<300,0>[{\phantom{O}}`{\phantom{O}};{\xi_0}]
}
\defdiag{twocellofpushoutdefinitionleftside}{   
  \morphism(0,-525)|a|/{@{->}@/^25pt/}/<675,0>[{b_0}`{y};{h_1'}]
  \morphism(0,0)|a|/->/<675,0>[{e}`{b_1};{p_1}]
  \morphism(0,-525)|b|/{@{->}@/_25pt/}/<675,0>[{b_0}`{y};{h_1}]
  \morphism(0,-525)|l|/<-/<0,525>[{b_0}`{e};{p_0}]
  \morphism(675,-525)|r|/<-/<0,525>[{y}`{b_1};{h_0'}]
  \morphism(338,-22)/=/<0,-180>[{\phantom{O}}`{\phantom{O}};]
  \morphism(338,-375)|r|/<=/<0,-300>[{\phantom{O}}`{\phantom{O}};{\xi_1}]
}
\defdiag{xizeropushout}{   
  \morphism(0,-450)|r|/{@{->}@/^28pt/}/<0,-600>[{b_0\sqcup_{(p_0,p_1)}b_1}`{y};{h'}]
  \morphism(0,-450)|l|/{@{->}@/_28pt/}/<0,-600>[{b_0\sqcup_{(p_0,p_1)}b_1}`{y};{h}]
  \morphism(0,0)|l|/->/<0,-450>[{b_1}`{b_0\sqcup_{(p_0,p_1)}b_1};{\mathfrak{d}^0_{p_0\sqcup_{e}p_1}}]
  \morphism(-188,-750)|a|/=>/<375,0>[{\phantom{O}}`{\phantom{O}};{\xi}]
}
\defdiag{xiumpushout}{   
  \morphism(0,-450)|r|/{@{->}@/^28pt/}/<0,-600>[{b_0\sqcup_{(p_0,p_1)}b_1}`{y};{h'}]
  \morphism(0,-450)|l|/{@{->}@/_28pt/}/<0,-600>[{b_0\sqcup_{(p_0,p_1)}b_1}`{y};{h}]
  \morphism(0,0)|l|/->/<0,-450>[{b_0}`{b_0\sqcup_{(p_0,p_1)}b_1};{\mathfrak{d}^1_{p_0\sqcup_{e}p_1}}]
  \morphism(-188,-750)|a|/=>/<375,0>[{\phantom{O}}`{\phantom{O}};{\xi}]
}
\defdiag{opcommatwocelldefinition}{   
  \morphism(375,0)|a|/->/<-375,-375>[{e}`{b_0};{p_0}]
  \morphism(375,0)|a|/->/<375,-375>[{e}`{b_1};{p_1}]
  \morphism(0,-375)|l|/->/<375,-375>[{b_0}`{p_0\uparrow{p_1}};{\delta_{p_0\uparrow{p_1}}^1}]
  \morphism(750,-375)|r|/->/<-375,-375>[{b_1}`{p_0\uparrow{p_1}};{\delta_{p_0\uparrow{p_1}}^0}]
  \morphism(150,-375)|a|/=>/<450,0>[{\phantom{O}}`{\phantom{O}};{\alpha^{p_0\uparrow{p_1}}}]
}
\defdiag{opcommatwocelldefinitionuniversalproperty}{   
  \morphism(375,0)|a|/->/<-375,-375>[{e}`{b_0};{p_0}]
  \morphism(375,0)|a|/->/<375,-375>[{e}`{b_1};{p_1}]
  \morphism(0,-375)|l|/->/<375,-375>[{b_0}`{p_0\uparrow{p_1}};{\delta_{p_0\uparrow{p_1}}^1}]
  \morphism(750,-375)|r|/->/<-375,-375>[{b_1}`{p_0\uparrow{p_1}};{\delta_{p_0\uparrow{p_1}}^0}]
  \morphism(375,-750)|a|/->/<0,-300>[{p_0\uparrow{p_1}}`{y};{h}]
  \morphism(150,-375)|a|/=>/<450,0>[{\phantom{O}}`{\phantom{O}};{\alpha^{p_0\uparrow{p_1}}}]
}
\defdiag{opcommatwocelldefinitionuniversalpropertyrightside}{   
  \morphism(375,0)|a|/->/<-375,-375>[{e}`{b_0};{p_0}]
  \morphism(375,0)|a|/->/<375,-375>[{e}`{b_1};{p_1}]
  \morphism(0,-375)|l|/->/<375,-375>[{b_0}`{y};{h_1}]
  \morphism(750,-375)|r|/->/<-375,-375>[{b_1}`{y};{h_0}]
  \morphism(188,-375)|a|/=>/<375,0>[{\phantom{O}}`{\phantom{O}};{\beta}]
}
\defdiag{twocellofopcommatwodefinitionrightsidenovo}{   
  \morphism(450,0)|l|/->/<-450,-450>[{e}`{b_0};{p_0}]
  \morphism(450,0)|r|/->/<450,-450>[{e}`{b_1};{p_1}]
  \morphism(0,-450)|m|/->/<450,-450>[{b_0}`{p_0\uparrow{p_1}};{\delta_{p_0\uparrow{p_1}}^1}]
  \morphism(900,-450)|m|/->/<-450,-450>[{b_1}`{p_0\uparrow{p_1}};{\delta_{p_0\uparrow{p_1}}^0}]
  \morphism(450,-900)|m|/->/<0,-450>[{p_0\uparrow{p_1}}`{y};{h}]
  \morphism(900,-450)|r|/{@{->}@/^30pt/}/<-450,-900>[{b_1}`{y};{h'\,\cdot\,\delta_{p_0\uparrow{p_1}}^0}]
  \morphism(225,-450)|a|/=>/<450,0>[{\phantom{O}}`{\phantom{O}};{\alpha^{p_0\uparrow{p_1}}}]
  \morphism(578,-900)|a|/=>/<345,0>[{\phantom{O}}`{\phantom{O}};{\xi_0}]
}
\defdiag{twocellofopcommatwodefinitionleftsidenovo}{   
  \morphism(450,0)|l|/->/<-450,-450>[{e}`{b_0};{p_0}]
  \morphism(450,0)|r|/->/<450,-450>[{e}`{b_1};{p_1}]
  \morphism(0,-450)|m|/->/<450,-450>[{b_0}`{p_0\uparrow{p_1}};{\delta_{p_0\uparrow{p_1}}^1}]
  \morphism(900,-450)|m|/->/<-450,-450>[{b_1}`{p_0\uparrow{p_1}};{\delta_{p_0\uparrow{p_1}}^0}]
  \morphism(450,-900)|m|/->/<0,-450>[{p_0\uparrow{p_1}}`{y};{h'}]
  \morphism(0,-450)|l|/{@{->}@/_30pt/}/<450,-900>[{b_0}`{y};{h\,\cdot\,\delta_{p_0\uparrow{p_1}}^1}]
  \morphism(225,-450)|a|/=>/<450,0>[{\phantom{O}}`{\phantom{O}};{\alpha^{p_0\uparrow{p_1}}}]
  \morphism(-22,-900)|a|/=>/<345,0>[{\phantom{O}}`{\phantom{O}};{\xi_1}]
}
\defdiag{xizeroopcomma}{   
  \morphism(0,-450)|r|/{@{->}@/^28pt/}/<0,-600>[{p_0\uparrow{p_1}}`{y};{h'}]
  \morphism(0,-450)|l|/{@{->}@/_28pt/}/<0,-600>[{p_0\uparrow{p_1}}`{y};{h}]
  \morphism(0,0)|l|/->/<0,-450>[{b_1}`{p_0\uparrow{p_1}};{\delta_{p_0\uparrow{p_1}}^0}]
  \morphism(-188,-750)|a|/=>/<375,0>[{\phantom{O}}`{\phantom{O}};{\xi}]
}
\defdiag{xiumopcomma}{   
  \morphism(0,-450)|r|/{@{->}@/^28pt/}/<0,-600>[{p_0\uparrow{p_1}}`{y};{h'}]
  \morphism(0,-450)|l|/{@{->}@/_28pt/}/<0,-600>[{p_0\uparrow{p_1}}`{y};{h}]
  \morphism(0,0)|l|/->/<0,-450>[{b_0}`{p_0\uparrow{p_1}};{\delta_{p_0\uparrow{p_1}}^1}]
  \morphism(-188,-750)|a|/=>/<375,0>[{\phantom{O}}`{\phantom{O}};{\xi}]
}
\defdiag{universaltwocelllaxdescentpsi}{   
  \morphism(450,0)|a|/->/<-450,-375>[{\lim(\mathfrak{D},\BBBB)}`{\BBBB{(\umm)}};{\dd^{\left(\mathfrak{D},\BBBB\right)}}]
  \morphism(450,0)|a|/->/<450,-375>[{\lim(\mathfrak{D},\BBBB)}`{\BBBB{(\umm)}};{\dd^{\left(\mathfrak{D},\BBBB\right)}}]
  \morphism(0,-375)|b|/->/<450,-375>[{\BBBB{(\umm)}}`{\BBBB{(\doiss)}};{\BBBB(\dd^1)}]
  \morphism(900,-375)|b|/->/<-450,-375>[{\BBBB{(\umm)}}`{\BBBB{(\doiss)}};{\BBBB(\dd^0)}]
  \morphism(225,-375)|a|/=>/<450,0>[{\phantom{O}}`{\phantom{O}};{\Psi^{\left(\mathfrak{D},\BBBB\right)}}]
}
\defdiag{universalonecelllaxdescentda}{   
  \morphism(0,0)|a|/->/<675,0>[{\lim(\mathfrak{D},\BBBB)}`{\BBBB{(\umm)}};{\dd^{\left(\mathfrak{D},\BBBB\right)}}]
}
\defdiag{laxdescentassociativityleftside}{   
  \morphism(300,0)|a|/->/<-300,-450>[{\BBBB{(\umm)}}`{\BBBB{(\doiss)}};{\BBBB(\dd^1)}]
  \morphism(300,0)|m|/->/<300,-450>[{\BBBB{(\umm)}}`{\BBBB{(\doiss)}};{\BBBB(\dd^1)}]
  \morphism(900,0)|a|/->/<-600,0>[{y}`{\BBBB{(\umm)}};{h}]
  \morphism(900,0)|r|/->/<300,-450>[{y}`{\BBBB{(\umm)}};{h}]
  \morphism(0,-450)|b|/->/<300,-450>[{\BBBB{(\doiss)}}`{\BBBB{(\tress)}};{\BBBB(\DD^2)}]
  \morphism(600,-450)|m|/->/<-300,-450>[{\BBBB{(\doiss)}}`{\BBBB{(\tress)}};{\BBBB(\DD^1)}]
  \morphism(1200,-450)|m|/->/<-600,0>[{\BBBB{(\umm)}}`{\BBBB{(\doiss)}};{\BBBB(\dd^0)}]
  \morphism(1200,-450)|r|/->/<-300,-450>[{\BBBB{(\umm)}}`{\BBBB{(\doiss)}};{\BBBB(\dd^0)}]
  \morphism(900,-900)|b|/->/<-600,0>[{\BBBB{(\doiss)}}`{\BBBB{(\tress)}};{\BBBB(\DD^0)}]
  \morphism(112,-450)|a|/=>/<375,0>[{\phantom{O}}`{\phantom{O}};{\BBBB(\sigma_{12})}]
  \morphism(548,-225)|a|/=>/<450,0>[{\phantom{O}}`{\phantom{O}};{\beta}]
  \morphism(562,-675)|a|/=>/<450,0>[{\phantom{O}}`{\phantom{O}};{\BBBB(\sigma_{01})}]
}
\defdiag{laxdescentassociativityrightside}{   
  \morphism(900,0)|a|/->/<300,-450>[{y}`{\BBBB{(\umm)}};{h}]
  \morphism(900,0)|m|/->/<-300,-450>[{y}`{\BBBB{(\umm)}};{h}]
  \morphism(300,0)|a|/<-/<600,0>[{\BBBB{(\umm)}}`{y};{h}]
  \morphism(300,0)|l|/->/<-300,-450>[{\BBBB{(\umm)}}`{\BBBB{(\doiss)}};{\BBBB(\dd^1)}]
  \morphism(1200,-450)|b|/->/<-300,-450>[{\BBBB{(\umm)}}`{\BBBB{(\doiss)}};{\BBBB(\dd^0)}]
  \morphism(600,-450)|m|/->/<300,-450>[{\BBBB{(\umm)}}`{\BBBB{(\doiss)}};{\BBBB(\dd^1)}]
  \morphism(0,-450)|m|/<-/<600,0>[{\BBBB{(\doiss)}}`{\BBBB{(\umm)}};{\BBBB(\dd^0)}]
  \morphism(0,-450)|l|/->/<300,-450>[{\BBBB{(\doiss)}}`{\BBBB{(\tress)}};{\BBBB(\DD^2)}]
  \morphism(300,-900)|b|/<-/<600,0>[{\BBBB{(\tress)}}`{\BBBB{(\doiss)}};{\BBBB(\DD^0)}]
  \morphism(712,-450)|a|/=>/<375,0>[{\phantom{O}}`{\phantom{O}};{\beta}]
  \morphism(202,-225)|a|/=>/<450,0>[{\phantom{O}}`{\phantom{O}};{\beta}]
  \morphism(188,-675)|a|/=>/<450,0>[{\phantom{O}}`{\phantom{O}};{\BBBB(\sigma_{02})}]
}
\defdiag{laxdescentidentityleftside}{   
  \morphism(0,0)|a|/->/<525,0>[{y}`{\BBBB{(\umm)}};{h}]
  \morphism(0,0)|l|/->/<0,-525>[{y}`{\BBBB{(\umm)}};{h}]
  \morphism(525,0)|m|/->/<0,-525>[{\BBBB{(\umm)}}`{\BBBB{(\doiss)}};{\BBBB(\dd^0)}]
  \morphism(525,0)/{@{=}@/^22pt/}/<300,-900>[{\BBBB{(\umm)}}`{\BBBB{(\umm)}};]
  \morphism(0,-525)|m|/->/<525,0>[{\BBBB{(\umm)}}`{\BBBB{(\doiss)}};{\BBBB(\dd^1)}]
  \morphism(0,-525)/{@{=}@/_20pt/}/<825,-375>[{\BBBB{(\umm)}}`{\BBBB{(\umm)}};]
  \morphism(525,-525)|m|/->/<300,-375>[{\BBBB{(\doiss)}}`{\BBBB{(\umm)}};{\BBBB(\ss^0)}]
  \morphism(150,-712)|a|/{@{=>}@<-3pt>}/<450,0>[{\phantom{O}}`{\phantom{O}};{\AAAA(\mathfrak{n}_1)^{-1}}]
  \morphism(540,-450)|a|/=>/<345,0>[{\phantom{O}}`{\phantom{O}};{\AAAA(\mathfrak{n}_0)}]
  \morphism(90,-262)|a|/=>/<345,0>[{\phantom{O}}`{\phantom{O}};{\beta}]
}
\defdiag{laxdescentidentityrightside}{   
  \morphism(0,0)|r|/{@{->}@/^15pt/}/<0,-900>[{y}`{\BBBB{(\umm)}};{h}]
  \morphism(0,0)|l|/{@{->}@/_15pt/}/<0,-900>[{y}`{\BBBB{(\umm)}};{h}]
  \morphism(-98,-450)/=/<195,0>[{\phantom{O}}`{\phantom{O}};]
}
\defdiag{laxdescentgeneralfactorization}{   
  \morphism(0,0)|a|/->/<1800,0>[{y}`{\BBBB{(\umm)}};{h}]
  \morphism(0,0)|l|/->/<900,-375>[{y}`{\lim\left(\mathfrak{D},\BBBB\right)};{h^{(\BBBB{,}\beta)}}]
  \morphism(900,-375)|r|/->/<900,375>[{\lim\left(\mathfrak{D},\BBBB\right)}`{\BBBB{(\umm)}};{\dd^{(\mathfrak{D},\BBBB)}}]
}
\defdiag{equation_on_the_descent_datum_leftside}{   
  \morphism(450,0)|r|/->/<0,-300>[{y}`{\lim(\mathfrak{D},\BBBB)};{h^{(\BBBB{,}\beta)}}]
  \morphism(450,-300)|l|/->/<-450,-300>[{\lim(\mathfrak{D},\BBBB)}`{\BBBB{(\umm)}};{\dd^{(\mathfrak{D},\BBBB)}}]
  \morphism(450,-300)|r|/->/<450,-300>[{\lim(\mathfrak{D},\BBBB)}`{\BBBB{(\umm)}};{\dd^{(\mathfrak{D},\BBBB)}}]
  \morphism(0,-600)|l|/->/<450,-300>[{\BBBB{(\umm)}}`{\BBBB{(\doiss)}};{\BBBB(\dd^1)}]
  \morphism(900,-600)|r|/->/<-450,-300>[{\BBBB{(\umm)}}`{\BBBB{(\doiss)}};{\BBBB(\dd^0)}]
  \morphism(188,-600)|a|/=>/<525,0>[{\phantom{O}}`{\phantom{O}};{\Psi^{(\mathfrak{D},\BBBB)}}]
}
\defdiag{equation_on_the_descent_datum_rightside}{   
  \morphism(450,0)|l|/->/<-450,-600>[{y}`{\BBBB{(\umm)}};{h}]
  \morphism(450,0)|r|/->/<450,-600>[{y}`{\BBBB{(\umm)}};{h}]
  \morphism(0,-600)|l|/->/<450,-300>[{\BBBB{(\umm)}}`{\BBBB{(\doiss)}};{\BBBB(\dd^1)}]
  \morphism(900,-600)|r|/->/<-450,-300>[{\BBBB{(\umm)}}`{\BBBB{(\doiss)}};{\BBBB(\dd^0)}]
  \morphism(188,-450)|a|/=>/<525,0>[{\phantom{O}}`{\phantom{O}};{\beta}]
}
\defdiag{equation_two_cell_for_descent_left_side}{   
  \morphism(0,0)|a|/->/<600,0>[{y}`{\BBBB{(\umm)}};{h_0}]
  \morphism(0,0)|m|/{@{->}@/^20pt/}/<0,-600>[{y}`{\BBBB{(\umm)}};{h_0}]
  \morphism(0,0)|l|/{@{->}@/_20pt/}/<0,-600>[{y}`{\BBBB{(\umm)}};{h_1}]
  \morphism(600,0)|r|/->/<0,-600>[{\BBBB{(\umm)}}`{\BBBB{(\doiss)}};{\BBBB(\dd^0)}]
  \morphism(0,-600)|r|/->/<600,0>[{\BBBB{(\umm)}}`{\BBBB{(\doiss)}};{\BBBB(\dd^1)}]
  \morphism(202,-300)|a|/=>/<375,0>[{\phantom{O}}`{\phantom{O}};{\beta_0}]
  \morphism(-150,-300)|a|/=>/<300,0>[{\phantom{O}}`{\phantom{O}};{\xi}]
}
\defdiag{equation_two_cell_for_descent_right_side}{   
  \morphism(0,0)|a|/->/<0,-600>[{y}`{\AAAA{(\umm)}};{h_1}]
  \morphism(0,0)|a|/{@{->}@/^20pt/}/<600,0>[{y}`{\BBBB{(\umm)}};{h_0}]
  \morphism(0,0)|m|/{@{->}@/_20pt/}/<600,0>[{y}`{\BBBB{(\umm)}};{h_1}]
  \morphism(600,0)|r|/->/<0,-600>[{\BBBB{(\umm)}}`{\AAAA{(\doiss)}};{\BBBB(\dd^0)}]
  \morphism(0,-600)|r|/->/<600,0>[{\AAAA{(\umm)}}`{\AAAA{(\doiss)}};{\BBBB(\dd^1)}]
  \morphism(112,-300)|a|/{@{=>}@<-8pt>}/<375,0>[{\phantom{O}}`{\phantom{O}};{\beta_1}]
  \morphism(300,150)|r|/<=/<0,-300>[{\phantom{O}}`{\phantom{O}};{\xi}]
}
\defdiag{xilaxdescenttwocellproperty}{   
  \morphism(0,-675)|l|/{@{<-}@/^30pt/}/<0,675>[{\lim(\mathfrak{D},\BBBB)}`{y};{h_1^{(\BBBB,\beta_1)}}]
  \morphism(0,-675)|r|/{@{<-}@/_30pt/}/<0,675>[{\lim(\mathfrak{D},\BBBB)}`{y};{h_0^{(\BBBB,\beta_0)}}]
  \morphism(0,-1125)|l|/<-/<0,450>[{\BBBB{(\umm)}}`{\lim(\mathfrak{D},\BBBB)};{\dd^{\left(\mathfrak{D},\BBBB\right)}}]
  \morphism(-225,-338)|a|/=>/<450,0>[{\phantom{O}}`{\phantom{O}};{\xi^{(\BBBB,\beta_1,\beta_0)}}]
}
\defdiag{opcomma_p_along_p}{   
  \morphism(300,0)|a|/->/<-300,-300>[{e}`{b};{p}]
  \morphism(300,0)|a|/->/<300,-300>[{e}`{b};{p}]
  \morphism(0,-300)|l|/->/<300,-300>[{b}`{b\uparrow_pb};{\d^1}]
  \morphism(600,-300)|r|/->/<-300,-300>[{b}`{b\uparrow_pb};{\d^0}]
  \morphism(98,-300)|a|/=>/<405,0>[{\phantom{O}}`{\phantom{O}};{\upalpha}]
}
\defdiag{pushoutdiagramfordefiningtwodimensionalcokerneldiagram}{   
  \morphism(0,0)|a|/->/<0,-525>[{b}`{b\uparrow_pb};{\d^1}]
  \morphism(0,0)|a|/->/<525,0>[{b}`{b\uparrow_pb};{\d^0}]
  \morphism(0,-525)|l|/->/<525,0>[{b\uparrow_pb}`{b\uparrow_pb\uparrow_pb};{\D^0}]
  \morphism(525,0)|r|/->/<0,-525>[{b\uparrow_pb}`{b\uparrow_pb\uparrow_pb};{\D^2}]
}
\defdiag{twodimensionalcokerneldiagramassociativityleftsidedefinition}{   
  \morphism(375,0)|a|/->/<-375,-375>[{e}`{b};{p}]
  \morphism(375,0)|a|/->/<375,-375>[{e}`{b};{p}]
  \morphism(0,-375)|l|/->/<375,-375>[{b}`{b\uparrow_pb};{\d^1}]
  \morphism(750,-375)|r|/->/<-375,-375>[{b}`{b\uparrow_pb};{\d^0}]
  \morphism(375,-750)|l|/->/<0,-375>[{b\uparrow_pb}`{b\uparrow_pb\uparrow_pb};{\D^1}]
  \morphism(150,-375)|a|/=>/<450,0>[{\phantom{O}}`{\phantom{O}};{\upalpha}]
}
\defdiag{twodimensionalcokerneldiagramassociativityrightsidedefinition}{   
  \morphism(675,0)|a|/->/<-675,-750>[{e}`{b};{p}]
  \morphism(675,0)|a|/->/<675,-750>[{e}`{b};{p}]
  \morphism(0,-750)|l|/->/<675,-375>[{b}`{b\uparrow_pb\uparrow_pb};{\D^2\d^1}]
  \morphism(1350,-750)|r|/->/<-675,-375>[{b}`{b\uparrow_pb\uparrow_pb};{\D^0\d^0}]
  \morphism(675,-375)|m|/{@{->}@/_21pt/}/<0,-750>[{b}`{b\uparrow_pb\uparrow_pb};{\D^0\d^0}]
  \morphism(675,-375)|m|/{@{->}@/^21pt/}/<0,-750>[{b}`{b\uparrow_pb\uparrow_pb};{\D^0\d^1}]
  \morphism(675,0)|m|/->/<0,-375>[{e}`{b};{p}]
  \morphism(112,-750)|a|/{@{=>}@<10pt>}/<375,0>[{\phantom{O}}`{\phantom{O}};{\id_{\D^2}\ast\upalpha}]
  \morphism(862,-750)|a|/{@{=>}@<10pt>}/<375,0>[{\phantom{O}}`{\phantom{O}};{\id_{\D^0}\ast\upalpha}]
  \morphism(578,-750)/=/<195,0>[{\phantom{O}}`{\phantom{O}};]
}
\defdiag{twodimensionalcokerneldiagramidentityleftsidedefinition}{   
  \morphism(375,0)|a|/->/<-375,-375>[{e}`{b};{p}]
  \morphism(375,0)|a|/->/<375,-375>[{e}`{b};{p}]
  \morphism(0,-375)|l|/->/<375,-375>[{b}`{b\uparrow_pb};{\d^1}]
  \morphism(750,-375)|r|/->/<-375,-375>[{b}`{b\uparrow_pb};{\d^0}]
  \morphism(375,-750)|l|/->/<0,-375>[{b\uparrow_pb}`{b};{\s^0}]
  \morphism(150,-375)|a|/=>/<450,0>[{\phantom{O}}`{\phantom{O}};{\upalpha}]
}
\defdiag{twodimensionalcokerneldiagramidentityrightsidedefinition}{   
  \morphism(0,0)|m|/{@{->}@/_21pt/}/<0,-1125>[{e}`{b};{p}]
  \morphism(0,0)|m|/{@{->}@/^21pt/}/<0,-1125>[{e}`{b};{p}]
  \morphism(-98,-562)/=/<195,0>[{\phantom{O}}`{\phantom{O}};]
}
\defdiag{semanticdescent_factorization_of_p}{   
  \morphism(0,0)|a|/->/<1800,0>[{e}`{b};{p}]
  \morphism(0,0)|l|/->/<900,-375>[{e}`{\lim\left(\mathfrak{D},\mathcal{H}_p\right)};{p^\H}]
  \morphism(900,-375)|r|/->/<900,375>[{\lim\left(\mathfrak{D},\mathcal{H}_p\right)}`{b};{\dd^{p}}]
}
\defdiag{equation_on_the_descent_datum_facorization_twodimensionalcokerneldiagram_leftside}{   
  \morphism(450,0)|r|/->/<0,-300>[{e}`{\lim(\mathfrak{D},\mathcal{H}_p)};{p^{\H}}]
  \morphism(450,-300)|l|/->/<-450,-300>[{\lim(\mathfrak{D},\mathcal{H}_p)}`{b};{\dd^{p}}]
  \morphism(450,-300)|r|/->/<450,-300>[{\lim(\mathfrak{D},\mathcal{H}_p)}`{b};{\dd^{p}}]
  \morphism(0,-600)|l|/->/<450,-300>[{b}`{b\uparrow_pb};{\d^1}]
  \morphism(900,-600)|r|/->/<-450,-300>[{b}`{b\uparrow_pb};{\d^0}]
  \morphism(188,-600)|a|/=>/<525,0>[{\phantom{O}}`{\phantom{O}};{\Psi^{p}}]
}
\defdiag{equation_on_the_descent_datum_facorization_twodimensionalcokerneldiagram_rightside}{   
  \morphism(450,0)|l|/->/<-450,-600>[{e}`{b};{p}]
  \morphism(450,0)|r|/->/<450,-600>[{e}`{b};{p}]
  \morphism(0,-600)|l|/->/<450,-300>[{b}`{b\uparrow_pb};{\d^1}]
  \morphism(900,-600)|r|/->/<-450,-300>[{b}`{b\uparrow_pb};{\d^0}]
  \morphism(188,-450)|a|/=>/<525,0>[{\phantom{O}}`{\phantom{O}};{\upalpha}]
}
\defdiag{image_by_of_opcomma_p_along_p_Ax}{   
  \morphism(450,0)|a|/->/<-450,-450>[{\AAA\left(x,e\right)}`{\AAA\left(x,b\right)};{\AAA\left(x,p\right)}]
  \morphism(450,0)|a|/->/<450,-450>[{\AAA\left(x,e\right)}`{\AAA\left(x,b\right)};{\AAA\left(x,p\right)}]
  \morphism(0,-450)|l|/->/<450,-450>[{\AAA\left(x,b\right)}`{\AAA\left(x,b\uparrow_pb\right)};{\AAA\left(x,\d^1\right)}]
  \morphism(900,-450)|r|/->/<-450,-450>[{\AAA\left(x,b\right)}`{\AAA\left(x,b\uparrow_pb\right)};{\AAA\left(x,\d^0\right)}]
  \morphism(232,-450)|a|/=>/<435,0>[{\phantom{O}}`{\phantom{O}};{\AAA\left(x,\upalpha\right)}]
}
\defdiag{image_semanticdescent_factorization_of_p}{   
  \morphism(0,0)|a|/->/<1800,0>[{\AAA\left(x,e\right)}`{\AAA\left(x,b\right)};{\AAA(x,p)}]
  \morphism(0,0)|m|/->/<900,-480>[{\AAA\left(x,e\right)}`{\lim\left(\mathfrak{D},\AAA(x,\mathcal{H}_p-)\right)};{\AAA(x,p)^{\left(\AAA\left(x,\mathcal{H}_p-\right),\AAA\left(x,\upalpha\right)\right)}}]
  \morphism(900,-480)|m|/->/<900,480>[{\lim\left(\mathfrak{D},\AAA(x,\mathcal{H}_p-)\right)}`{\AAA\left(x,b\right)};{\dd^{\left(\mathfrak{D},\AAA\left(x,\mathcal{H}_p-\right)\right)}}]
}
\defdiag{dual_semanticcodescent_factorization_of_l}{   
  \morphism(0,0)|a|/->/<1800,0>[{e}`{b};{l}]
  \morphism(0,0)|l|/->/<900,-480>[{e}`{\colim\left(\mathfrak{D},\mathcal{H}^l\right)};{\dd_{l}}]
  \morphism(900,-480)|r|/->/<900,480>[{\colim\left(\mathfrak{D},\mathcal{H}^l\right)}`{b};{l_\H}]
}
\defdiag{multiplication_of_the_monad_definition}{   
  \morphism(450,0)|l|/->/<-450,-300>[{b}`{b};{t}]
  \morphism(0,-300)|l|/->/<450,-300>[{b}`{b};{t}]
  \morphism(450,0)|r|/->/<0,-600>[{b}`{b};{t}]
  \morphism(75,-300)|a|/=>/<375,0>[{\phantom{O}}`{\phantom{O}};{m}]
}
\defdiag{unit_of_the_monad_definition}{   
  \morphism(0,0)/=/<0,-600>[{b}`{b};]
  \morphism(0,0)|r|/{@{->}@/^40pt/}/<0,-600>[{b}`{b};{t}]
  \morphism(-38,-300)|a|/=>/<375,0>[{\phantom{O}}`{\phantom{O}};{\eta}]
}
\defdiag{leftsideoftheequationassociativityofmonad}{   
  \morphism(600,0)|r|/->/<0,-600>[{b}`{b};{t}]
  \morphism(0,-600)|b|/->/<600,0>[{b}`{b};{t}]
  \morphism(0,0)|m|/->/<600,-600>[{b}`{b};{t}]
  \morphism(0,-600)|l|/->/<0,600>[{b}`{b};{t}]
  \morphism(0,0)|a|/->/<600,0>[{b}`{b};{t}]
  \morphism(450,-0)|r|/{@{=>}@<-5pt>}/<0,-375>[{\phantom{O}}`{\phantom{O}};{m}]
  \morphism(150,-225)|l|/{@{=>}@<5pt>}/<0,-375>[{\phantom{O}}`{\phantom{O}};{m}]
}
\defdiag{rightsideoftheequationassociativityofmonad}{   
  \morphism(600,0)|r|/->/<0,-600>[{b}`{b};{t}]
  \morphism(0,0)|a|/->/<600,0>[{b}`{b};{t}]
  \morphism(0,-600)|l|/->/<0,600>[{b}`{b};{t}]
  \morphism(0,-600)|b|/->/<600,0>[{b}`{b};{t}]
  \morphism(0,-600)|m|/->/<600,600>[{b}`{b};{t}]
  \morphism(150,-0)|l|/{@{=>}@<5pt>}/<0,-375>[{\phantom{O}}`{\phantom{O}};{m}]
  \morphism(450,-225)|r|/{@{=>}@<-5pt>}/<0,-375>[{\phantom{O}}`{\phantom{O}};{m}]
}
\defdiag{firstsideoftheequationidenityofamonad}{   
  \morphism(0,0)|r|/->/<600,-600>[{b}`{b};{t}]
  \morphism(0,-600)|m|/->/<0,600>[{b}`{b};{t}]
  \morphism(0,-600)|b|/->/<600,0>[{b}`{b};{t}]
  \morphism(0,0)/{@{=}@/_35pt/}/<0,-600>[{b}`{b};]
  \morphism(-292,-300)|a|/=>/<300,0>[{\phantom{O}}`{\phantom{O}};{\eta}]
  \morphism(300,-255)|r|/{@{=>}@<-13pt>}/<0,-360>[{\phantom{O}}`{\phantom{O}};{m}]
}
\defdiag{secondsideoftheequationidenityofamonad}{   
  \morphism(0,0)|m|/->/<600,-600>[{b}`{b};{t}]
  \morphism(0,-600)|l|/->/<0,600>[{b}`{b};{t}]
  \morphism(0,-600)|b|/->/<600,0>[{b}`{b};{t}]
  \morphism(600,-600)/{@{=}@/_35pt/}/<-600,600>[{b}`{b};]
  \morphism(300,-52)|r|/{@{=>}@<12pt>}/<0,-300>[{\phantom{O}}`{\phantom{O}};{\eta}]
  \morphism(300,-255)|l|/{@{=>}@<-10pt>}/<0,-360>[{\phantom{O}}`{\phantom{O}};{m}]
}
\defdiag{algebra_multiplication_of_the_monad_definition}{   
  \morphism(0,0)|a|/->/<600,-600>[{b}`{b};{t}]
  \morphism(0,-600)|l|/->/<0,600>[{b^\t}`{b};{\uu^\t}]
  \morphism(0,-600)|b|/->/<600,0>[{b^\t}`{b};{\uu^\t}]
  \morphism(300,-255)|l|/{@{=>}@<-10pt>}/<0,-360>[{\phantom{O}}`{\phantom{O}};{\mu^\t}]
}
\defdiag{algebra_leftsideoftheequationassociativityofmonad}{   
  \morphism(600,0)|r|/->/<0,-600>[{b}`{b};{t}]
  \morphism(0,-600)|b|/->/<600,0>[{y}`{b};{h}]
  \morphism(0,0)|m|/->/<600,-600>[{b}`{b};{t}]
  \morphism(0,-600)|l|/->/<0,600>[{y}`{b};{h}]
  \morphism(0,0)|a|/->/<600,0>[{b}`{b};{t}]
  \morphism(450,-0)|r|/{@{=>}@<-5pt>}/<0,-375>[{\phantom{O}}`{\phantom{O}};{m}]
  \morphism(150,-225)|l|/{@{=>}@<5pt>}/<0,-375>[{\phantom{O}}`{\phantom{O}};{\beta}]
}
\defdiag{algebra_rightsideoftheequationassociativityofmonad}{   
  \morphism(600,0)|r|/->/<0,-600>[{b}`{b};{t}]
  \morphism(0,0)|a|/->/<600,0>[{b}`{b};{t}]
  \morphism(0,-600)|l|/->/<0,600>[{y}`{b};{h}]
  \morphism(0,-600)|b|/->/<600,0>[{y}`{b};{h}]
  \morphism(0,-600)|m|/->/<600,600>[{y}`{b};{h}]
  \morphism(150,-0)|l|/{@{=>}@<5pt>}/<0,-375>[{\phantom{O}}`{\phantom{O}};{\beta}]
  \morphism(450,-225)|r|/{@{=>}@<-5pt>}/<0,-375>[{\phantom{O}}`{\phantom{O}};{\beta}]
}
\defdiag{algebra_secondsideoftheequationidenityofamonad}{   
  \morphism(0,0)|m|/->/<600,-600>[{b}`{b};{t}]
  \morphism(0,-600)|l|/->/<0,600>[{y}`{b};{h}]
  \morphism(0,-600)|b|/->/<600,0>[{y}`{b};{h}]
  \morphism(600,-600)/{@{=}@/_35pt/}/<-600,600>[{b}`{b};]
  \morphism(300,-52)|r|/{@{=>}@<12pt>}/<0,-300>[{\phantom{O}}`{\phantom{O}};{\eta}]
  \morphism(300,-255)|l|/{@{=>}@<-10pt>}/<0,-360>[{\phantom{O}}`{\phantom{O}};{\beta}]
}
\defdiag{algebra_firstsideoftheequationidenityofamonad}{   
  \morphism(0,-600)|r|/{@{->}@/_25pt/}/<0,600>[{y}`{b};{h}]
  \morphism(0,-600)|l|/{@{->}@/^25pt/}/<0,600>[{y}`{b};{h}]
  \morphism(-90,-300)/=/<180,0>[{\phantom{O}}`{\phantom{O}};]
}
\defdiag{generalalgebrafactorization_universalproperty}{   
  \morphism(0,0)|a|/->/<1800,0>[{y}`{b};{h}]
  \morphism(0,0)|l|/->/<900,-375>[{y}`{b^\t};{h^{(\t{,}\beta)}}]
  \morphism(900,-375)|r|/->/<900,375>[{b^\t}`{b};{\uu^\t}]
}
\defdiag{lefttwocell_algebrastructure_monad_resulting}{   
  \morphism(600,0)|a|/->/<600,-600>[{b}`{b};{t}]
  \morphism(600,-600)|l|/->/<0,600>[{b^\t}`{b};{\uu^\t}]
  \morphism(600,-600)|m|/->/<600,0>[{b^\t}`{b};{\uu^\t}]
  \morphism(0,-900)|a|/->/<600,300>[{y}`{b^\t};{h_1^{(\t{,}\beta_1)}}]
  \morphism(0,-900)|b|/{@{->}@/_30pt/}/<1200,300>[{y}`{b};{\uu^\t{\,\cdot\,}{h_0^{(\t{,}\beta_0)}}}]
  \morphism(900,-255)|l|/{@{=>}@<-10pt>}/<0,-360>[{\phantom{O}}`{\phantom{O}};{\mu^\t}]
  \morphism(600,-645)|r|/=>/<0,-360>[{\phantom{O}}`{\phantom{O}};{\xi}]
}
\defdiag{righttwocell_algebrastructure_monad_resulting}{   
  \morphism(600,0)|a|/->/<600,-600>[{b}`{b};{t}]
  \morphism(600,-600)|m|/->/<0,600>[{b^\t}`{b};{\uu^\t}]
  \morphism(600,-600)|b|/->/<600,0>[{b^\t}`{b};{\uu^\t}]
  \morphism(0,-900)|b|/->/<600,300>[{y}`{b^\t};{h_0^{(\t{,}\beta_0)}}]
  \morphism(0,-900)|l|/{@{->}@/^25pt/}/<600,900>[{y}`{b};{{\uu^\t}{\,\cdot\,}h_1^{(\t{,}\beta_1)}}]
  \morphism(900,-255)|l|/{@{=>}@<-10pt>}/<0,-360>[{\phantom{O}}`{\phantom{O}};{\mu^\t}]
  \morphism(158,-450)|a|/=>/<360,0>[{\phantom{O}}`{\phantom{O}};{\xi}]
}
\defdiag{xilaxdescenttwocellproperty_algebras}{   
  \morphism(0,-675)|l|/{@{<-}@/^30pt/}/<0,675>[{b^\t}`{y};{h_1^{(\t{,}\beta_1)}}]
  \morphism(0,-675)|r|/{@{<-}@/_30pt/}/<0,675>[{b^\t}`{y};{h_0^{(\t{,}\beta_0)}}]
  \morphism(0,-1050)|l|/<-/<0,375>[{b}`{b^\t};{\uu^\t}]
  \morphism(-225,-338)|a|/=>/<450,0>[{\phantom{O}}`{\phantom{O}};{\xi_{(\t,\beta_1,\beta_0)}}]
}
\defdiag{secondsideoftheequation_definition_of_Kan_Extensions}{   
  \morphism(0,0)|m|/->/<750,-750>[{x}`{y};{\ran_gf}]
  \morphism(0,-750)|l|/->/<0,750>[{z}`{x};{g}]
  \morphism(0,-750)|b|/->/<750,0>[{z}`{y};{f}]
  \morphism(0,0)|a|/{@{->}@/^35pt/}/<750,-750>[{x}`{y};{h}]
  \morphism(375,-112)|r|/{@{=>}@<12pt>}/<0,-270>[{\phantom{O}}`{\phantom{O}};{\beta}]
  \morphism(375,-382)|l|/{@{=>}@<-15pt>}/<0,-330>[{\phantom{O}}`{\phantom{O}};{\gamma^{\ran_gf}}]
}
\defdiag{firstsideoftheequation_definition_of_Kan_Extensions}{   
  \morphism(0,0)|m|/->/<750,-750>[{x}`{y};{\ran_gf}]
  \morphism(0,0)|a|/{@{->}@/^35pt/}/<750,-750>[{x}`{y};{h}]
  \morphism(375,-112)|r|/{@{=>}@<12pt>}/<0,-270>[{\phantom{O}}`{\phantom{O}};{\beta}]
}
\defdiag{firstdiagram_for_the_proof_of_opcomma_result_about_Kan_Extensions}{   
  \morphism(900,-525)|b|/->/<375,0>[{p_0\uparrow{p_1}}`{y};{h}]
  \morphism(300,-525)|b|/->/<600,0>[{b_1}`{p_0\uparrow{p_1}};{\delta^0_{p_0\uparrow{p_1}}}]
  \morphism(0,-525)|l|/->/<0,525>[{e}`{b_0};{p_0}]
  \morphism(0,-525)|b|/->/<300,0>[{e}`{b_1};{p_1}]
  \morphism(0,0)|a|/{@{->}@/^40pt/}/<1275,-525>[{b_0}`{y};{h_1'}]
  \morphism(450,38)|r|/{@{=>}@<12pt>}/<0,-525>[{\phantom{O}}`{\phantom{O}};{\check{\beta}}]
}
\defdiag{seconddiagram_for_the_proof_of_opcomma_result_about_Kan_Extensions}{   
  \morphism(900,-525)|b|/->/<375,0>[{p_0\uparrow{p_1}}`{y};{h'}]
  \morphism(300,-525)|b|/->/<600,0>[{b_1}`{p_0\uparrow{p_1}};{\delta^0_{p_0\uparrow{p_1}}}]
  \morphism(0,-525)|l|/->/<0,525>[{e}`{b_0};{p_0}]
  \morphism(0,-525)|b|/->/<300,0>[{e}`{b_1};{p_1}]
  \morphism(0,0)|a|/->/<900,-525>[{b_0}`{p_0\uparrow{p_1}};{\delta^1_{p_0\uparrow{p_1}}}]
  \morphism(0,0)|a|/{@{->}@/^40pt/}/<1275,-525>[{b_0}`{y};{h_1'}]
  \morphism(450,-195)|l|/{@{=>}@<-19pt>}/<0,-300>[{\phantom{O}}`{\phantom{O}};{\alpha^{p_0\uparrow{p_1}}}]
  \morphism(638,-98)/{@{=}@<13pt>}/<0,-180>[{\phantom{O}}`{\phantom{O}};]
}
\defdiag{KanExtension_Identity_Support_Lemma_Opcomma_Objects}{   
  \morphism(0,0)|m|/->/<900,-450>[{p_0\uparrow{p_1}}`{y};{h}]
  \morphism(0,-450)|l|/->/<0,450>[{b_1}`{p_0\uparrow{p_1}};{\delta^0_{p_0\uparrow{p_1}}}]
  \morphism(0,-450)|b|/->/<450,0>[{b_1}`{p_0\uparrow{p_1}};{\delta^0_{p_0\uparrow{p_1}}}]
  \morphism(450,-450)|b|/->/<450,0>[{p_0\uparrow{p_1}}`{y};{h}]
  \morphism(0,0)|a|/{@{->}@/^35pt/}/<900,-450>[{p_0\uparrow{p_1}}`{y};{h'}]
  \morphism(450,15)|r|/{@{=>}@<12pt>}/<0,-300>[{\phantom{O}}`{\phantom{O}};{\underline{\beta}}]
  \morphism(450,-225)/{@{=}@<-15pt>}/<0,-180>[{\phantom{O}}`{\phantom{O}};]
}
\defdiag{Leftside_KanExtension_Identity_Support_Lemma_Opcomma_Objects}{   
  \morphism(0,-450)|l|/->/<0,450>[{b_1}`{p_0\uparrow{p_1}};{\delta^0_{p_0\uparrow{p_1}}}]
  \morphism(0,-450)|b|/->/<450,0>[{b_1}`{p_0\uparrow{p_1}};{\delta^0_{p_0\uparrow{p_1}}}]
  \morphism(450,-450)|b|/->/<450,0>[{p_0\uparrow{p_1}}`{y};{h}]
  \morphism(0,0)|a|/{@{->}@/^35pt/}/<900,-450>[{p_0\uparrow{p_1}}`{y};{h'}]
  \morphism(450,-128)/=/<0,-195>[{\phantom{O}}`{\phantom{O}};]
}
\defdiag{uniquetwocellbetaunderlineoftheproofsupportopcommadois}{   
  \morphism(450,0)|l|/->/<-450,-450>[{e}`{b_0};{p_0}]
  \morphism(450,0)|r|/->/<450,-450>[{e}`{b_1};{p_1}]
  \morphism(0,-450)|m|/->/<450,-450>[{b_0}`{p_0\uparrow{p_1}};{\delta_{p_0\uparrow{p_1}}^1}]
  \morphism(900,-450)|m|/->/<-450,-450>[{b_1}`{p_0\uparrow{p_1}};{\delta_{p_0\uparrow{p_1}}^0}]
  \morphism(450,-900)|m|/->/<0,-450>[{p_0\uparrow{p_1}}`{y};{h'}]
  \morphism(900,-450)|r|/{@{->}@/^40pt/}/<-450,-900>[{b_1}`{y};{h\,\cdot\,\delta_{p_0\uparrow{p_1}}^0}]
  \morphism(225,-450)|a|/=>/<450,0>[{\phantom{O}}`{\phantom{O}};{\alpha^{p_0\uparrow{p_1}}}]
  \morphism(698,-900)/=/<180,0>[{\phantom{O}}`{\phantom{O}};]
}
\defdiag{uniquetwocellbetaunderlineoftheproofsupportopcomma}{   
  \morphism(450,0)|l|/->/<-450,-450>[{e}`{b_0};{p_0}]
  \morphism(450,0)|r|/->/<450,-450>[{e}`{b_1};{p_1}]
  \morphism(0,-450)|m|/->/<450,-450>[{b_0}`{p_0\uparrow{p_1}};{\delta_{p_0\uparrow{p_1}}^1}]
  \morphism(900,-450)|m|/->/<-450,-450>[{b_1}`{p_0\uparrow{p_1}};{\delta_{p_0\uparrow{p_1}}^0}]
  \morphism(450,-900)|m|/->/<0,-450>[{p_0\uparrow{p_1}}`{y};{h}]
  \morphism(0,-450)|l|/{@{->}@/_40pt/}/<450,-900>[{b_0}`{y};{h'_1}]
  \morphism(225,-450)|a|/=>/<450,0>[{\phantom{O}}`{\phantom{O}};{\alpha^{p_0\uparrow{p_1}}}]
  \morphism(-90,-900)|a|/=>/<405,0>[{\phantom{O}}`{\phantom{O}};{\underline{\beta}\ast{\id_{\delta_{p_0\uparrow{p_1}}^1}}}]
}
\defdiag{left_side_codensity_multiplication_definition}{   
  \morphism(0,0)|m|/->/<750,-750>[{b}`{b};{t}]
  \morphism(0,-750)|l|/->/<0,750>[{e}`{b};{p}]
  \morphism(0,-750)|b|/->/<750,0>[{e}`{b};{p}]
  \morphism(0,0)|a|/{@{->}@/^30pt/}/<750,-750>[{b}`{b};{t^2}]
  \morphism(375,-142)|r|/{@{=>}@<12pt>}/<0,-330>[{\phantom{O}}`{\phantom{O}};{m}]
  \morphism(375,-345)|l|/{@{=>}@<-15pt>}/<0,-405>[{\phantom{O}}`{\phantom{O}};{\gamma}]
}
\defdiag{right_side_codensity_multiplication_definition}{   
  \morphism(0,0)|a|/->/<375,-375>[{b}`{b};{t}]
  \morphism(0,-750)|l|/->/<0,750>[{e}`{b};{p}]
  \morphism(0,-750)|b|/->/<750,0>[{e}`{b};{p}]
  \morphism(375,-375)|a|/->/<375,-375>[{b}`{b};{t}]
  \morphism(0,-750)|m|/->/<375,375>[{e}`{b};{p}]
  \morphism(375,-428)|r|/=>/<0,-330>[{\phantom{O}}`{\phantom{O}};{\gamma}]
  \morphism(188,-240)|l|/{@{=>}@<-4pt>}/<0,-330>[{\phantom{O}}`{\phantom{O}};{\gamma}]
}
\defdiag{left_side_codensity_identity_definition}{   
  \morphism(0,0)|m|/->/<600,-600>[{b}`{b};{t}]
  \morphism(0,-600)|l|/->/<0,600>[{e}`{b};{p}]
  \morphism(0,-600)|b|/->/<600,0>[{e}`{b};{p}]
  \morphism(0,0)/{@{=}@/^30pt/}/<600,-600>[{b}`{b};]
  \morphism(300,-68)|r|/{@{=>}@<12pt>}/<0,-330>[{\phantom{O}}`{\phantom{O}};{\eta}]
  \morphism(300,-210)|l|/{@{=>}@<-15pt>}/<0,-405>[{\phantom{O}}`{\phantom{O}};{\gamma}]
}
\defdiag{right_side_codensity_identity_definition}{   
  \morphism(0,0)/=/<600,-600>[{b}`{b};]
  \morphism(0,-600)|l|/->/<0,600>[{e}`{b};{p}]
  \morphism(0,-600)|b|/->/<600,0>[{e}`{b};{p}]
  \morphism(98,-300)/{@{=}@<-15pt>}/<180,0>[{\phantom{O}}`{\phantom{O}};]
}
\defdiag{semantic_factorization_of_p_definition_dednat}{   
  \morphism(0,0)|a|/->/<1800,0>[{e}`{b};{p}]
  \morphism(0,0)|l|/->/<900,-375>[{e}`{b^\t};{p^\t}]
  \morphism(900,-375)|r|/->/<900,375>[{b^\t}`{b};{\uu^\t}]
}
\defdiag{imageofAx_semantic_factorization_of_p_definition_dednat}{   
  \morphism(0,0)|a|/->/<1800,0>[{\AAA\left({x},e\right)}`{\AAA\left({x},b\right)};{\AAA\left(x,p\right)}]
  \morphism(0,0)|m|/->/<900,-375>[{\AAA\left({x},e\right)}`{\AAA\left({x},b\right)^{\AAA\left({x},\t\right)}};{\AAA\left({x},p\right)^{\left(\AAA\left(x,\t\right){,}\AAA\left({x},\gamma\right)\right)}}]
  \morphism(900,-375)|m|/->/<900,375>[{\AAA\left({x},b\right)^{\AAA\left({x},\t\right)}}`{\AAA\left({x},b\right)};{\uu^{\AAA\left(x,\t\right)}}]
}
\defdiag{first_diagram_triangle_identity}{   
  \morphism(0,0)/=/<0,-600>[{b}`{b};]
  \morphism(600,0)/=/<0,-600>[{e}`{e};]
  \morphism(0,-600)|b|/->/<600,0>[{b}`{e};{l}]
  \morphism(0,0)|a|/->/<600,0>[{b}`{e};{l}]
  \morphism(600,0)|m|/->/<-600,-600>[{e}`{b};{p}]
  \morphism(8,-0)|a|/{@{=>}@<-20pt>}/<405,0>[{\phantom{O}}`{\phantom{O}};{\eta}]
  \morphism(188,-600)|a|/{@{=>}@<20pt>}/<405,0>[{\phantom{O}}`{\phantom{O}};{\varepsilon}]
}
\defdiag{second_diagram_triangle_identity}{   
  \morphism(0,0)/=/<0,-600>[{b}`{b};]
  \morphism(600,0)/=/<0,-600>[{e}`{e};]
  \morphism(0,-600)|b|/<-/<600,0>[{b}`{e};{p}]
  \morphism(0,0)|a|/<-/<600,0>[{b}`{e};{p}]
  \morphism(600,0)|m|/<-/<-600,-600>[{e}`{b};{l}]
  \morphism(8,-0)|a|/{@{=>}@<-20pt>}/<405,0>[{\phantom{O}}`{\phantom{O}};{\eta}]
  \morphism(188,-600)|a|/{@{=>}@<20pt>}/<405,0>[{\phantom{O}}`{\phantom{O}};{\varepsilon}]
}
\defdiag{usualfactorizationKleisli}{   
  \morphism(0,0)|a|/->/<1800,0>[{e}`{b};{l}]
  \morphism(0,0)|l|/->/<900,-375>[{e}`{b_{\left(pl,\id_p\ast\varepsilon\ast\id_l,\eta\right)}};{\mathbbmss{l}_{\left(pl,\id_p\ast\varepsilon\ast\id_l,\eta\right)}}]
  \morphism(900,-375)|r|/->/<900,375>[{b_{\left(pl,\id_p\ast\varepsilon\ast\id_l,\eta\right)}}`{b};{l_{\t}}]
}
\defdiag{firstsideoftheequation_definition_preservation_of_Kan_Extensions}{   
  \morphism(0,0)|a|/->/<750,-750>[{x}`{y};{\ran_gf}]
  \morphism(0,-750)|l|/->/<0,750>[{z}`{x};{g}]
  \morphism(0,-750)|b|/->/<750,0>[{z}`{y};{f}]
  \morphism(750,-750)|b|/->/<600,0>[{y}`{y'};{\delta}]
  \morphism(375,-360)|l|/{@{=>}@<-15pt>}/<0,-375>[{\phantom{O}}`{\phantom{O}};{\gamma^{\ran_gf}}]
}
\defdiag{First_Mate_Definition}{   
  \morphism(0,0)/=/<0,-450>[{b_0}`{b_0};]
  \morphism(0,-450)|l|/->/<0,-450>[{b_0}`{b_1};{h_b}]
  \morphism(450,-450)/=/<0,-450>[{e_1}`{e_1};]
  \morphism(450,0)|r|/->/<0,-450>[{e_0}`{e_1};{h_e}]
  \morphism(0,0)|a|/->/<450,0>[{b_0}`{e_0};{l_0}]
  \morphism(0,-900)|b|/->/<450,0>[{b_1}`{e_1};{l_1}]
  \morphism(450,0)|m|/->/<-450,-450>[{e_0}`{b_0};{p_0}]
  \morphism(450,-450)|m|/->/<-450,-450>[{e_1}`{b_1};{p_1}]
  \morphism(0,-225)|a|/{@{=>}@<12pt>}/<300,0>[{\phantom{O}}`{\phantom{O}};{\eta_0}]
  \morphism(150,-675)|a|/{@{=>}@<-13pt>}/<300,0>[{\phantom{O}}`{\phantom{O}};{\varepsilon_1}]
  \morphism(75,-450)|a|/=>/<300,0>[{\phantom{O}}`{\phantom{O}};{\beta}]
}
\defdiag{Twocell_Mate_Definition}{   
  \morphism(450,0)|r|/->/<0,-450>[{e_0}`{e_1};{h_e}]
  \morphism(0,-450)|l|/->/<0,-450>[{b_0}`{b_1};{h_b}]
  \morphism(450,0)|m|/->/<-450,-450>[{e_0}`{b_0};{p_0}]
  \morphism(450,-450)|m|/->/<-450,-450>[{e_1}`{b_1};{p_1}]
  \morphism(75,-450)|a|/=>/<300,0>[{\phantom{O}}`{\phantom{O}};{\beta}]
}
\defdiag{Second_Mate_Definition}{   
  \morphism(0,0)/=/<0,-450>[{e_0}`{e_0};]
  \morphism(0,-450)|l|/->/<0,-450>[{e_0}`{e_1};{h_e}]
  \morphism(450,-450)/=/<0,-450>[{b_1}`{b_1};]
  \morphism(450,0)|r|/->/<0,-450>[{b_0}`{b_1};{h_b}]
  \morphism(0,0)|a|/->/<450,0>[{e_0}`{b_0};{p_0}]
  \morphism(0,-900)|b|/->/<450,0>[{e_1}`{b_1};{p_1}]
  \morphism(450,0)|m|/->/<-450,-450>[{b_0}`{e_0};{l_0}]
  \morphism(450,-450)|m|/->/<-450,-450>[{b_1}`{e_1};{l_1}]
  \morphism(0,-225)|a|/{@{<=}@<12pt>}/<300,0>[{\phantom{O}}`{\phantom{O}};{\varepsilon_0}]
  \morphism(150,-675)|a|/{@{<=}@<-13pt>}/<300,0>[{\phantom{O}}`{\phantom{O}};{\eta_1}]
  \morphism(75,-450)|a|/<=/<300,0>[{\phantom{O}}`{\phantom{O}};{\beta{'}}]
}
\defdiag{SecondTwocell_Mate_Definition}{   
  \morphism(450,0)|r|/->/<0,-450>[{b_0}`{b_1};{h_e}]
  \morphism(0,-450)|l|/->/<0,-450>[{e_0}`{e_1};{h_b}]
  \morphism(450,0)|m|/->/<-450,-450>[{b_0}`{e_0};{l_0}]
  \morphism(450,-450)|m|/->/<-450,-450>[{b_1}`{e_1};{l_1}]
  \morphism(75,-450)|a|/<=/<300,0>[{\phantom{O}}`{\phantom{O}};{\beta{'}}]
}
\defdiag{firstequationwrtmaintheoremscodensity_monada_Kanextensionopcommaobjects}{   
  \morphism(300,0)|a|/->/<-300,-300>[{e}`{b};{p}]
  \morphism(300,0)|a|/->/<300,-300>[{e}`{b};{p}]
  \morphism(0,-300)|l|/->/<300,-300>[{b}`{b\uparrow_pb};{\d^1}]
  \morphism(600,-300)|r|/->/<-300,-300>[{b}`{b\uparrow_pb};{\d^0}]
  \morphism(300,-600)|l|/->/<0,-300>[{b\uparrow_pb}`{b};{\ell}]
  \morphism(112,-300)|a|/=>/<375,0>[{\phantom{O}}`{\phantom{O}};{\upalpha}]
}
\defdiag{firstequationwrtmaintheoremscodensity_monada_Kanextension}{   
  \morphism(0,0)|a|/->/<750,-750>[{b}`{b};{t}]
  \morphism(0,-750)|l|/->/<0,750>[{e}`{b};{p}]
  \morphism(0,-750)|b|/->/<750,0>[{e}`{b};{p}]
  \morphism(375,-345)|l|/{@{=>}@<-15pt>}/<0,-405>[{\phantom{O}}`{\phantom{O}};{\gamma}]
}
\defdiag{basicdiagramthatcommutesforthedefinitionofella}{   
  \morphism(0,0)|a|/->/<600,0>[{b}`{b\uparrow_pb};{\d^0}]
  \morphism(0,0)|l|/->/<0,-600>[{b}`{b\uparrow_pb};{\d^1}]
  \morphism(0,-600)|b|/->/<600,0>[{b\uparrow_pb}`{b\uparrow_pb\uparrow_pb};{\D^0}]
  \morphism(600,0)|r|/->/<0,-600>[{b\uparrow_pb}`{b\uparrow_pb\uparrow_pb};{\D^2}]
  \morphism(0,-600)|b|/{@{->}@/_25pt/}/<1050,-450>[{b\uparrow_pb}`{b};{\ell}]
  \morphism(600,0)|r|/{@{->}@/^25pt/}/<450,-1050>[{b\uparrow_pb}`{b};{t\ell}]
  \morphism(600,-600)|a|/->/<450,-450>[{b\uparrow_pb\uparrow_pb}`{b};{\ella}]
}
\defdiag{betaumofthemainproof}{   
  \morphism(375,0)|a|/->/<-375,-375>[{x}`{b};{h}]
  \morphism(375,0)|a|/->/<375,-375>[{x}`{b};{h}]
  \morphism(0,-375)|m|/->/<375,-375>[{b}`{b\uparrow_pb};{\d^1}]
  \morphism(750,-375)|m|/->/<-375,-375>[{b}`{b\uparrow_pb};{\d^0}]
  \morphism(375,-750)|m|/->/<0,-450>[{b\uparrow_pb}`{b\uparrow_pb\uparrow_pb};{\D^1}]
  \morphism(0,-375)|l|/{@{->}@/_25pt/}/<375,-825>[{b}`{b\uparrow_pb\uparrow_pb};{\D^2\d^1}]
  \morphism(750,-375)|r|/{@{->}@/^25pt/}/<-375,-825>[{b}`{b\uparrow_pb\uparrow_pb};{\D^0\d^0}]
  \morphism(188,-375)|a|/=>/<375,0>[{\phantom{O}}`{\phantom{O}};{\beta}]
  \morphism(510,-788)/=/<180,0>[{\phantom{O}}`{\phantom{O}};]
  \morphism(60,-788)/=/<180,0>[{\phantom{O}}`{\phantom{O}};]
}
\defdiag{betacofthemainproof}{   
  \morphism(750,0)|a|/->/<-750,-480>[{x}`{b};{h}]
  \morphism(750,0)|a|/->/<750,-480>[{x}`{b};{h}]
  \morphism(750,0)|m|/->/<0,-480>[{x}`{b};{h}]
  \morphism(750,-480)|m|/->/<0,-480>[{b}`{b\uparrow_pb};{\d^0}]
  \morphism(750,-480)|m|/->/<750,-480>[{b}`{b\uparrow_pb};{\d^1}]
  \morphism(1500,-960)|b|/->/<-750,-480>[{b\uparrow_pb}`{b\uparrow_pb\uparrow_pb};{\D^0}]
  \morphism(1500,-480)|r|/->/<0,-480>[{b}`{b\uparrow_pb};{\d^0}]
  \morphism(0,-480)|l|/->/<750,-480>[{b}`{b\uparrow_pb};{\d^1}]
  \morphism(750,-960)|l|/->/<0,-480>[{b\uparrow_pb}`{b\uparrow_pb\uparrow_pb};{\D^2}]
  \morphism(225,-480)|a|/=>/<300,0>[{\phantom{O}}`{\phantom{O}};{\beta}]
  \morphism(975,-480)|a|/=>/<300,0>[{\phantom{O}}`{\phantom{O}};{\beta}]
  \morphism(1035,-960)/=/<180,0>[{\phantom{O}}`{\phantom{O}};]
}
\defdiag{algebra_rightsideoftheequationassociativityofmonadforthemainproof}{   
  \morphism(600,0)|r|/->/<0,-600>[{b}`{b};{t}]
  \morphism(0,0)|a|/->/<600,0>[{b}`{b};{t}]
  \morphism(0,-600)|l|/->/<0,600>[{x}`{b};{h}]
  \morphism(0,-600)|b|/->/<600,0>[{x}`{b};{h}]
  \morphism(0,-600)|m|/->/<600,600>[{x}`{b};{h}]
  \morphism(150,-0)|l|/{@{=>}@<10pt>}/<0,-375>[{\phantom{O}}`{\phantom{O}};{\id_\ell\ast\beta}]
  \morphism(450,-225)|r|/{@{=>}@<-10pt>}/<0,-375>[{\phantom{O}}`{\phantom{O}};{\id_\ell\ast\beta}]
}
\defdiag{algebra_leftsideoftheequationassociativityofmonadforthemainproof}{   
  \morphism(600,0)|r|/->/<0,-600>[{b}`{b};{t}]
  \morphism(0,-600)|b|/->/<600,0>[{y}`{b};{h}]
  \morphism(0,0)|m|/->/<600,-600>[{b}`{b};{t}]
  \morphism(0,-600)|l|/->/<0,600>[{y}`{b};{h}]
  \morphism(0,0)|a|/->/<600,0>[{b}`{b};{t}]
  \morphism(450,-0)|r|/{@{=>}@<-10pt>}/<0,-375>[{\phantom{O}}`{\phantom{O}};{m}]
  \morphism(150,-225)|l|/{@{=>}@<10pt>}/<0,-375>[{\phantom{O}}`{\phantom{O}};{\id_\ell\ast\beta}]
}

\def\pu{}
\fi


\title{Semantic Factorization and Descent} 
\author{Fernando Lucatelli Nunes}
\address{CMUC, Centre for Mathematics, University of Coimbra, Portugal \& \\
	Department of Information and Computing Sciences, Utrecht University, The Netherlands
 }
\eaddress{f.lucatellinunes@uu.nl}
\amsclass{18N10, 18Cxx, 18Dxx, 18A22, 18A30, 18A40,  18A25}

\keywords{formal monadicity theorem, formal theory of monads, codensity monads, semantic lax descent factorization, descent data, two-dimensional cokernel diagram, opcomma object, effective faithful morphism, B\'{e}nabou-Roubaud theorem, lax descent category, two-dimensional limits}

\thanks{This research was partially supported by the Institut de Recherche en Math\'{e}matique et Physique (IRMP, UCLouvain, Belgium), and by the Centre for Mathematics of the University of Coimbra - UIDB/00324/2020, funded by the Portuguese Government through FCT/MCTES. \\	
This work was also supported through the programme ``Oberwolfach Leibniz Fellows'' by the Mathematisches Forschungsinstitut Oberwolfach in 2022.}

\maketitle 

\begin{abstract}
Let $\AAA $ be a $2$-category with suitable opcomma objects and pushouts.
We give a direct proof that, provided that the codensity monad of a morphism $p$ exists and
is preserved by a suitable morphism, the factorization given by the lax descent object of the \textit{two-dimensional cokernel diagram} of $p$ is up to isomorphism the same 
as the semantic factorization of $p$, either one existing if the other does. The result can be seen as
a counterpart account to the celebrated
B\'{e}nabou-Roubaud theorem. This leads in particular to a monadicity theorem, since it characterizes monadicity via descent.
It should be
noted that all the conditions on the codensity monad of $p$ trivially hold
whenever $p$ has a left adjoint and, hence, in this case, we find monadicity to be a two-dimensional exact
condition on $p$, namely, to be an effective faithful morphism of the $2$-category $\AAA $. 
\end{abstract}

\tableofcontents  
\setcounter{secnumdepth}{-1}
\section{Introduction}
\textit{Grothendieck descent theory}~\cite{MR1603475} has been generalized
from a solution of the problem of understanding the image of the functors  $\Mod (f) $ in which $\Mod :\Ring\to \Cat $ is the usual pseudofunctor between the category of rings and the $2$-category of categories that associates each ring $ \RRRR $ with the category $\Mod ( \RRRR )$ of right $\RRRR $-modules (\textit{e.g.} \cite{MR2107397}).

It is often more descriptive to portray \textit{descent theory} as a higher dimensional counterpart of \textit{sheaf theory} (see, for instance, the introduction of \cite{MR1285884}). In this context, 
the analogy can be roughly stated  as follows:  the \textit{descent condition} and the \textit{descent data}
are respectively two-dimensional counterparts of the \textit{sheaf condition} and the \textit{gluing condition}.

The most fundamental constructions in descent theory are the lax descent category and its variations
(\textit{e.g.} \cite[pag.~177]{MR0401868}). Namely, given
a truncated pseudocosimplicial category
$$ \AAAA : \Delta _\mathrm{3}\to \Cat $$
%
\pu
\begin{equation*}
\diag{truncatedpseudocosimplicialcategory}
\end{equation*}
we construct its \textit{lax descent category} or \textit{descent category}. An object of the lax descent category (descent category) is an object $x$ of the category $\AAAA (\mathsf{1} )$
endowed with a descent data which is a morphism (respectively, invertible morphism) $\AAAA (d^1)(x)\to \AAAA (d^0)(x) $ satisfying the usual
\textit{cocycle/associativity} and \textit{identity} conditions. 
Morphisms are morphisms between the underlying objects in $\AAAA (\mathsf{1} )$ that respect the \textit{descent data}.

Another perspective, which highlights descent theory's main role in Janelidze-Galois theory,
is that, given a bifibred category, the lax descent category of the truncated pseudocosimplicial category 
induced by an internal category generalizes the notion of the category of \textit{internal (pre)category actions} (\textit{e.g.} \cite[Section~1]{MR1466540}). 

In the setting above, if the bifibration is the basic one, we actually get the notion 
of internal actions. The simplest example is the category of actions of a small category 
in $\Set $; that is to say, the category of functors from a small category into $\Set $. A small category $a $ is just an internal category in $\Set $ and the category of actions 
(functors) $a\to \Set $
coincides with the lax descent category of the composition of the (image by $\op : \Cat ^\mathrm{co} \to \Cat$ of the) internal category $a$, $\op (a): \Delta _\mathrm{3} \to \Set ^\op $, with
the pseudofunctor $\Set / - :\Set ^\op \to \Cat  $ that comes from the basic fibration.

Assume that we have a pseudofunctor  $\FFFF : \mathbb{C} ^\op\to \Cat $ 
such that $\mathbb{C}$ has pullbacks, and $\FFFF (q)!\dashv\FFFF (q) $ for every morphism $q $ of $\mathbb{C}$.
Given a morphism $q : w\to w' $ of $\mathbb{C}$, the B\'{e}nabou-Roubaud theorem
(see \cite{MR0255631} or, for instance, 
\cite[Theorem~1.4]{2016arXiv160604999L}) says that the (lax) descent category
of the truncated pseudocosimplicial category
%
\pu
\begin{equation*}
\diag{truncatedpseudocosimplicialcategoryinducedbyamorphisminabifibredcategory}
\end{equation*}
given by the composition of $\FFFF $ with the internal groupoid induced by $q $, 
is equivalent to the Eilenberg-Moore category of the monad induced by the adjunction $\FFFF (q)!\dashv\FFFF (q) $, 
provided that $\FFFF $ satisfies the so called Beck-Chevalley condition (see, for instance, the Beck-Chevalley condition for  pseudofunctors in \cite[Section~4]{2019arXiv190600517L}).

Since \textit{monad theory} already was a established 
subfield of category theory, the B\'{e}nabou-Roubaud theorem gave
an insightful connection between the theories,
motivating what is nowadays often called \textit{monadic approach to descent} by  giving a 
characterization of  \textit{descent via 
monadicity} in several cases of interest
(see, for instance, \cite[Section~1]{MR1271335}, \cite[Section~2]{MR1285884}, or  the introduction of \cite{2016arXiv160604999L}).

The main contribution of the present article can be seen as a 
counterpart account to the  B\'{e}nabou-Roubaud theorem. We give the \textit{semantic factorization} via descent, hence giving, in particular, a characterization of 
\textit{monadicity via descent}. Although the B\'{e}nabou-Roubaud theorem is originally a result 
in the setting of the $2$-category  $\Cat $,
our contribution takes place in 
the more general context of \textit{two-dimensional category theory} (\textit{e.g.} \cite{MR0357542}), or in the so called
\textit{formal category theory}, as briefly explained below.

In his pioneering work on bicategories, B\'{e}nabou observed that the notion of \textit{monad}, formerly called \textit{standard construction} or \textit{triple}, coincides with the notion of a lax
functor $\mathsf{1}\to\Cat $ and can be pursued in any bicategory, giving convincing examples to the
generalization of the notion~\cite[Section~5]{MR0220789}. 

Taking B\'{e}nabou's point in consideration, Street~\cite{MR0299653, MR0347936} gave a formal account and generalization of the former established theory of monads by developing   
the theory within the general setting of $2$-categories. \textit{The formal theory of monads} is a celebrated example of how two-dimensional category theory can give insight to
$1$-dimensional category theory, since, besides generalizing several notions, it 
conceptually enriches the formerly established theory of monads. 
Street~\cite{MR0299653} starts showing that, when it exists, the 
Eilenberg-Moore construction of a monad in a $2$-category $\mathbb{A}$ 
is given by a right $2$-reflection of the monad along a $2$-functor from the $2$-category $\mathbb{A} $ into the $2$-category
of monads in $\mathbb{A} $. From this point, making good use of 
the four dualities of \textit{two-dimensional category theory}, Street develops the formal account of aspects
of monad theory, including distributive laws, Kleisli construction,   and a generalization of 
the \textit{semantics-structure adjunction} (\textit{e.g.} \cite[Chapter~II]{MR0280560}).

The theory of \textit{two-dimensional limits} (\textit{e.g.} \cite{MR0401868, MR998024}), or weighted limits in $2$-categories, also provides a great
account of formal category theory, since it shows that several constructions previously introduced in
$1$-dimensional category theory
 are actually examples of weighted limits and, hence, are universally defined and can be pursued in the
general context of a $2$-category.

Examples of the constructions that are particular weighted limits are: the lax descent category
and variations, the Eilenberg-Moore category~\cite[Theorem~2.2]{MR0184984} and the comma category~\cite[pag.~36]{LawvereThesis}. Duality also plays important role in this context: it 
usually illuminates or expands the original concepts of $1$-dimensional category theory. For instance: 
\begin{itemize}
\renewcommand\labelitemi{--}
\item The dual of the notion of descent object gives
the notion of codescent object, which is important, for instance, in $2$-dimensional monad theory (see, for instance, \cite{MR1935980, MR3491845, 2016arXiv160703087L});
\item The
dual notion of the \textit{Eilenberg-Moore object} in $\Cat $ gives the Kleisli category~\cite{MR0177024} of a monad, while the codual gives the category of the coalgebras of a comonad.
\end{itemize}

Despite receiving less attention in the literature than the notion of \textit{comma object}, the dual notion, 
called \textit{opcomma object}, was already considered in
\cite[pag.~109]{MR0354813} and it is essential to the present work.
More precisely, given a morphism $p: e\to b $ of a $2$-category $\AAA $, if $\AAA $ has suitable opcomma objects and pushouts,
on one hand, we can consider the \textit{two-dimensional cokernel diagram}  
$$\mathcal{H}_p: \Delta _\textrm{Str}\to \AAA $$
%
\pu
\begin{equation*}
\diag{twodimensionalcokerneldiagram}
\end{equation*}
of $p$, whose precise definition can be found in \ref{definicaodehighercokernel} below. Assuming that $\AAA $ has the lax descent object of $\H_p $, the universal property of $\mathrm{lax}\textrm{-}\mathcal{D}\mathrm{esc}\left(\mathcal{H}_p\right)$ and the universal $2$-cell of the opcomma object induce a factorization
%
\pu
\begin{equation}\label{eq:lax-descentfactorization-introduction}
	\diag{Semanticdescentfactorizationofp}\tag{$\mathsf{SLDF}$}
\end{equation} 
of $p$, called herein the \textit{semantic lax descent factorization}. If the comparison morphism $e\to \mathrm{lax}\textrm{-}\mathcal{D}\mathrm{esc}\left(\mathcal{H}_p \right) $ is an equivalence, we say that $p$ is an \textit{effective faithful morphism}. This concept is actually self-codual, meaning that its codual notion
 coincides with the original one.

On the other hand, if such a morphism $p$ has a \textit{codensity monad} $\t $  which means that the right Kan extension of $p$ along itself exists in $\AAA $, then the universal $2$-cell of the $\ran _p p $ induces the  \textit{semantic factorization}
%
\pu
\begin{equation}\label{eq:semantic-introduction}
	\diag{SemanticFactorizationp}\tag{$\mathsf{SF}$}
\end{equation} 
through the Eilenberg-Moore object $b^\t $ of $\t $ provided that it exists (see, for instance,  
\cite[pag.~67]{MR0280560} for the case of the $2$-category of enriched categories). If the comparison $e\to b ^\t $ is an equivalence,
we say that $p$ is \textit{monadic}. The codual notion is that of \textit{comonadicity}. 

The main theorem of the present article concerns both the factorizations above. More precisely, Theorem \ref{maintheoremofthepaper} states the following:

\begin{inmtheorem}
Let $\AAA $ be a $2$-category which has the two-dimensional cokernel diagram of a morphism $p$. Moreover, assume that $\ran _p p $ exists and is preserved
by the universal morphism $\delta _ {p\uparrow p} ^0   $ of the opcomma object $ b\uparrow _p b$. 

There is an isomorphism between the Eilenberg-Moore object $b ^\t $ and the lax descent object $\mathrm{lax}\textrm{-}\mathcal{D}\mathrm{esc}\left(\mathcal{H}_p \right)$, either one existing if the other does. In this case, the semantic factorization \eqref{eq:semantic-introduction} is isomorphic to the semantic lax descent factorization \eqref{eq:lax-descentfactorization-introduction}.
\end{inmtheorem}

In particular, this gives a \textit{formal monadicity theorem} as a corollary, since it shows that, 
assuming that a morphism $p$ of $\AAA $ satisfies the conditions above on the codensity monad, $p$ is monadic if and only if $p$ is an effective faithful morphism. 
Moreover, since this result holds for any $2$-category, we can consider the duals of this formal monadicity theorem: namely, we also get characterizations
of comonadic, Kleisli and co-Kleisli morphisms.

By the \textit{Dubuc-Street formal adjoint-functor theorem} (\textit{viz.}, \cite[Theorem~I.4.1]{MR0280560} and \cite[Prop.~2]{MR0463261}), if $p $ has a left adjoint, 
the codensity monad is the monad induced by the adjunction and $\ran _p p $ is absolute. Thus, in this case,
assuming the existence of the two-dimensional cokernel diagram, 
the hypothesis of our theorem holds. Therefore, as a corollary of our main result, we get the following monadicity characterization:
\begin{intheorem}
Assume that the $2$-category $\AAA $ has the two-dimensional cokernel diagram of $p: e\to b$.
\begin{itemize}
\renewcommand\labelitemi{--}
\item The morphism $p$  is monadic if and only if $p$ is an effective faithful morphism and has a left adjoint;
\item The morphism $p$ is comonadic if and only if $p$ is an effective faithful morphism and has a right adjoint.
\end{itemize}
\end{intheorem}

Recall that, in the particular case of $\AAA = \Cat $ (and other $2$-categories,
such as the $2$-category of enriched categories), we have  
 Beck's monadicity theorem (\textit{e.g.} \cite[Theorem~3.14]{MR2178101} and \cite[Theorem~II.2.1]{MR0280560}). It states that:  a functor is monadic if and only if it creates absolute coequalizers and it has a left adjoint. Hence, by our main result, we can conclude that: provided that the functor $p$ has a left adjoint, $p$ creates absolute coequalizers if and only if it is an effective faithful morphism.

The fact
above suggests the following question: are effective faithful morphisms in $\Cat $ characterized by the property 
of creating absolute coequalizers? In Remark \ref{whyitisnotthesamething} we show that the answer to this question is negative by the self coduality
of the concept of effective faithful morphism and non-self duality of the concept of functor that creates  absolute
coequalizers.

This work was motivated by two main aims. Firstly, 
to 
get a formal monadicity 
theorem given by a \textit{$2$-dimensional exact condition}. 
Secondly, to better understand the relation between descent and monadicity
in a given $2$-category and, together with \cite{2016arXiv160604999L}, get alternative
guiding templates for the development of higher descent theory and monadicity.  

Although we do not make these connections in this paper, 
the results on $2$-dimensional category theory of the present work already establish framework and have applications to 
 the author's ongoing work on 
descent theory in the context of \cite{MR1285884, 2016arXiv160604999L}.

The main aim of Section \ref{sectioncategoriesdelta} is to set up basic terminology related to the category of the finite nonempty ordinals
$\Delta $ and its strict replacement $\Delta _ \mathrm{Str} $. As observed above, this work is meant to be applicable in the 
classical context of descent theory and, hence, we should consider lax descent categories of pseudofunctors $\Delta _\mathrm{3}\to \Cat $.
In order to do so, we consider suitable strict replacements $\Delta _ \mathrm{Str}\to \Cat $.

The main results (Theorem \ref{principal} and Theorem \ref{maintheoremofthepaper}) can be seen as theorems on $2$-dimensional limits and colimits. 
For this reason, we recall basics on  $2$-dimensional limits in Section \ref{2dimensionallimitscolimits}. 
We give an explicit definition of the $2$-dimensional limits 
 related to the two-dimensional cokernel diagram. This helps to establish terminology and framework for 
the rest of the paper. 

In \ref{laxdescentobjectsdetails}, we give  an explicit definition of the lax descent object for $2$-functors
$\Delta _ \mathrm{Str}\to \AAA $  in order to 
establish the lax descent factorization induced by the two-dimensional cokernel diagram \eqref{laxdescentfactorizationofthehighercokernel} of a morphism $p$, the \textit{semantic lax descent factorization of $p$}. This perspective over lax descent objects 
is also useful to future work on 
giving further applications of the results of the present paper in \textit{Grothendieck descent theory} within the context of \cite{MR1285884, 2016arXiv160604999L}.

In Section \ref{semanticandfurther}, we recall basic aspects of Eilenberg-Moore objects in a $2$-category $\AAA $. Given a tractable
morphism $p$ in $\AAA $, it induces a monad and, in the presence of the Eilenberg-Moore objects, it also induces a factorization,
the \textit{semantic factorization} of $p$ (\textit{e.g.} \cite[Section~2]{MR0299653} or \cite[Theorem~II.1.1]{MR0280560}). We are only interested
in morphisms $p$ that have codensity monads, that is to say, the right Kan extension of $p$ along itself. We recall the
basics of this setting, including the definitions of right Kan extensions and codensity monads in Section \ref{semanticandfurther}.

We do not present more than very basic toy examples of codensity monads.
We refer to \cite[Chapter~II]{MR0280560} for the classical theory on codensity monads, while \cite{MR3856919, MR3080612} are recent considerations that
can be particularly useful to understand interesting examples.

Still in Section \ref{semanticandfurther}, Lemma \ref{teoremadeultimomomento} gives a connection between opcomma objects and right Kan extensions. The statement 
is particularly useful for establishing an important adjunction (see Propositions \ref{rightkanextensionoftheidentityalongd0} and \ref{conditionofpreservation}) and proving the main results.
The Dubuc-Street formal adjoint-functor theorem, also important for our proofs,
is recalled in Theorem \ref{DUBUCSTREET}. 

The \textit{mate correspondence}~\cite[Proposition~2.1]{MR0357542} is a 
useful framework in $2$-dimensional category theory
that states an isomorphism between two special double categories that come from each $2$-category $\AAA $. It plays a central role in the proof of Theorem \ref{principal}, but we only need it in very basic terms
as recalled in Remark \ref{Mate Correspondence Theorem PhD}, with which we finish Section \ref{semanticandfurther}. 

The point of Section \ref{MainTheoremsSection} is to prove the main results of the present paper.  We start by establishing an important condition to the main theorem, which arises from Propositions \ref{rightkanextensionoftheidentityalongd0}
and \ref{conditionofpreservation}: the 
condition of preservation of $\ran _ p p $ by the universal morphism $\d^0_ {p\uparrow p}  $. We, then, go towards the proof
of the main result,  constructing an adjunction  in Proposition \ref{definitionella}, defining particularly useful $2$-cells for
our proof in Lemma \ref{important2cells} and, finally, proving Theorem \ref{principal}. 

We also give a brief discussion on the condition of Proposition \ref{conditionofpreservation}. Firstly,
we show that every right adjoint morphism satisfies the condition in Proposition \ref{okforrightadjoints}. Then
we give examples and counterexamples in \ref{toyexamples}. 

The final section is mostly intended to apply our main result 
in order to get our monadicity theorem using the concept of \textit{effective faithful morphism}. We finish the article with a remark
on the self-coduality of this concept, in opposition to the non-self duality of the property of creating absolute coequalizers. This gives a comparison between the Beck's monadicity theorem and ours, showing in particular that effective faithful morphisms in $\Cat$ are not characterized by the property of creating absolute coequalizers.

\setcounter{secnumdepth}{5}

\section{Categories of ordinals}\label{sectioncategoriesdelta}

Let $\Cat $ be the cartesian closed category of categories in some universe. We denote the internal hom  by 
$$\Cat[-,-]: \Cat ^\op\times \Cat\to \Cat .$$ 

A $2$-category $\AAA $ herein is the same as a $\Cat $-enriched category.
As usual, 
the composition of $1$-cells (morphisms) is denoted
by $\circ $, $\cdot $, or omitted whenever it is clear from the context. 
The vertical composition of $2$-cells is denoted by $\cdot $  or omitted when it is clear,  while the horizontal composition is denoted
by $\ast$. Recall that, from the vertical and horizontal compositions, we construct the basic operation of \textit{pasting} (see \cite[pag.~79]{MR0357542} and \cite{MR1040947}).

As mentioned in the introduction, duality is one of the most fundamental
aspects of theories on $2$-categories. Unlike $1$-dimensional category theory, two-dimensional category theory 
has four duals. More precisely,
any $2$-category $\AAA $ gives rise to four $2$-categories:  $\AAA $, $\AAA ^\op $,  $\AAA ^\co $, $\AAA ^{\coop } $ which are respectively related 
to \textit{inverting the directions} of nothing, morphisms, $2$-cells, morphisms and $2$-cells. Hence every concept/result gives rise to four (not necessarily different) 
duals: the concept/result itself, the dual, the codual, the codual of the dual.

Although it is important to keep in mind the importance of duality, 
we usually leave to the interested reader the straightforward exercise of stating precisely the four duals of
most of the dualizable aspects of the present work.

In this section, we fix notation related to the categories of ordinals and the strict replacement $\Delta _\mathrm{Str} $. 
We denote by $\Delta  $ the locally discrete  $2$-category of finite nonempty ordinals and order preserving functions between them. Recall that 
$\Delta $ is generated
by  the degeneracy and face maps; 
that is to say, $\Delta  $ is generated by the diagram
$$\xymatrix{  \mathsf{1}\ar@<2ex>[rr]|-{d^0}\ar@<-2ex>[rr]|-{d^1} && \mathsf{2}\ar[ll]|-{s^0}
\ar@<2 ex>[rr]|-{d ^0}\ar[rr]|-{d ^1}\ar@<-2ex>[rr]|-{d ^2} && \mathsf{3}\ar@/_4ex/@<-2 ex>[ll]|-{s^0}\ar@/^4ex/@<2ex>[ll]|-{s^1}\ar@<1.5ex>[rr]\ar@<0.5ex>[rr]\ar@<-0.5ex>[rr]\ar@<-1.5ex>[rr]&& \cdots\ar@/^2ex/@<1.5ex>[ll]\ar@/^5ex/@<1.5ex>[ll]\ar@/_3ex/@<-1.5 ex>[ll] } $$
with the following relations: 
\begin{equation*}
\begin{aligned}
d ^k d ^i&=& d^{i}d^{k-1}, &\hspace{1mm}\mbox{if  }\hspace{1mm} i<k ;& \\
s^ks^i &=& s^is^{k+1}, &\hspace{1mm}\mbox{if  }\hspace{1mm} i\leq k ;&  \\
s^k d^i &=& d^i s^{k-1}, &\hspace{1mm}\mbox{if  }\hspace{1mm} i< k ;&
\end{aligned}
\qquad\qquad\qquad
\begin{aligned}
s^k d^i &= \id ,\hspace{1mm}\mbox{if  }\hspace{1mm} i=k\mbox{ or } i=k+1 ; \\
s^k d^i &= d^{i-1}s^k, \hspace{1mm}\mbox{if  }\hspace{1mm} i>k+1 . 
\end{aligned}
\end{equation*}
We are particularly interested in the sub-$2$-category $\Delta _\mathrm{3} $ of $\Delta $
with the objects $\mathsf{1}, \mathsf{2} $ and $\mathsf{3} $ 
generated by the morphisms below.
$$\xymatrix{  \mathsf{1}\ar@<2ex>[rr]|-{d^0}\ar@<-2ex>[rr]|-{d^1} && \mathsf{2}\ar[ll]|-{s^0}
\ar@<2 ex>[rr]|-{d ^0}\ar[rr]|-{d ^1}\ar@<-2ex>[rr]|-{d ^2} && \mathsf{3} } $$
For simplicity, we use the same notation to the objects and morphisms of $\Delta $ and their images by the 
usual inclusion $\Delta\to\Cat $ which is locally bijective on objects. It should be noted that the image  
of the faces and degeneracy maps by $\Delta\to\Cat $ are given by:
\begin{equation*}
\begin{aligned}
d ^k : &\mathsf{n}-\mathsf{1} &\to &\hskip 1em\mathsf{n} & \\
       &\mathsf{t} &\mapsto &
			\begin{dcases}
    		\mathsf{t} + \mathsf{1},& \text{if } t\geq k\\
    \mathsf{t} ,              & \text{otherwise}
\end{dcases} &
\end{aligned}
\qquad\qquad\qquad
\begin{aligned}
s ^k : &\mathsf{n}+\mathsf{1} &\to &\hskip 1em\mathsf{n} & \\
    &\mathsf{t} &\mapsto &
			\begin{dcases}
    \mathsf{t},& \text{if } t\leq k\\
    \mathsf{t} - \mathsf{1},              & \text{otherwise.}
\end{dcases} & 
\end{aligned}
\end{equation*}
Furthermore, in order to give the weight of the \textit{lax descent object},
we consider
the $2$-category $\Delta _ \mathrm{Str} $.

\begin{defi}[$\Delta _\mathrm{Str} $]\label{deltaestritodescidasecao1}
We denote by $\Delta _\mathrm{Str} $ the $2$-category freely generated by the diagram
\[\xymatrix{ \umm\ar@<1.7 ex>[rrr]^-{\dd ^0}\ar@<-1.7ex>[rrr]_-{\dd ^1} &&& \doiss\ar[lll]|-{\ss ^0}
\ar@<1.7 ex>[rrr]^{\DD ^0}\ar[rrr]|-{\DD ^1}\ar@<-1.7ex>[rrr]_{\DD ^2} &&& \tress }\] 
with the invertible $2$-cells:
\begin{equation*}
\begin{aligned}
\sigma_{01} &:&  \DD ^1 \dd ^0\Rightarrow \DD ^{0}\dd ^{0},\\
\sigma_{02} &:&  \DD ^2 \dd ^0\Rightarrow \DD ^{0}\dd ^{1},\\
\sigma_{12} &:&  \DD ^2 \dd ^1\Rightarrow \DD ^{1}\dd ^{1},									
\end{aligned}
\qquad\qquad\qquad
\begin{aligned}										
 \mathfrak{n}_0        &:&  \ss ^0\dd ^0\Rightarrow \id _{\umm},  \\
 \mathfrak{n}_1        &:&  \ss ^{0}\dd ^{1}\Rightarrow \id _{\umm } .
\end{aligned}
\end{equation*}
\end{defi}

\begin{lem}[$\mathtt{e}_{\Delta _ \mathrm{Str} }$]
There is a biequivalence 
$\mathtt{e}_{\Delta _ \mathrm{Str} } : \Delta _ \mathrm{Str}\approx \Delta_{\mathrm{3}}$ 
which is bijective on objects,
defined by:
$$ \umm \mapsto \mathsf{1}, \mbox{  }\, \doiss\mapsto \mathsf{2}, \mbox{  }\, \tress\mapsto \mathsf{3}, \hskip2em  \dd ^k \mapsto d^k, \mbox{  }\,  \ss ^0 \mapsto s^0, \mbox{  }\, \DD ^k \mapsto d^k, \hskip2em \sigma _ {ki}\mapsto \id _ {d ^i d ^k}, \mbox{  }\, \mathfrak{n} _ {k}\mapsto \id _ {\id _ {{\mathsf{1} } } }.$$
\end{lem}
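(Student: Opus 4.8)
The plan is to show that $\mathtt{e}_{\Delta_{\mathrm{Str}}}$ is a well-defined $2$-functor which is bijective on objects and a local equivalence; since a $2$-functor which is bijective on objects is a biequivalence exactly when it is a local equivalence, this suffices. First I would check well-definedness. As $\Delta_{\mathrm{Str}}$ is the $2$-category freely generated by the displayed $2$-computad, a $2$-functor out of it is determined by free choices of images for the objects, for the generating $1$-cells and for the generating $2$-cells, subject only to the images having matching boundaries. On $1$-cells this is clear; on $2$-cells it amounts to the assertions that $\mathtt{e}_{\Delta_{\mathrm{Str}}}(\sigma_{ki})=\id$ and $\mathtt{e}_{\Delta_{\mathrm{Str}}}(\mathfrak{n}_k)=\id$ are $2$-cells of $\Delta_{\mathrm{3}}$ with the prescribed source and target, which is precisely the content of the simplicial identities $d^1d^0=d^0d^0$, $d^2d^0=d^0d^1$, $d^2d^1=d^1d^1$ and $s^0d^0=s^0d^1=\id_{\mathsf{1}}$. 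Bijectivity on objects is immediate from the definition.

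It then remains to prove that each functor $\mathtt{e}_{\Delta_{\mathrm{Str}}}\colon\Delta_{\mathrm{Str}}(x,y)\to\Delta_{\mathrm{3}}(\mathtt{e}_{\Delta_{\mathrm{Str}}}x,\mathtt{e}_{\Delta_{\mathrm{Str}}}y)$ is an equivalence of categories. Since $\Delta_{\mathrm{3}}$ is locally discrete, this reduces to: (a) every morphism of $\Delta_{\mathrm{3}}$ is $\mathtt{e}_{\Delta_{\mathrm{Str}}}$ of some $1$-cell of $\Delta_{\mathrm{Str}}$; and (b) for parallel $1$-cells $f,g$ of $\Delta_{\mathrm{Str}}$ there is exactly one $2$-cell $f\Rightarrow g$ when $\mathtt{e}_{\Delta_{\mathrm{Str}}}f=\mathtt{e}_{\Delta_{\mathrm{Str}}}g$ and none otherwise. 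Part (a) is clear, as the generating $1$-cells of $\Delta_{\mathrm{Str}}$ map onto the generating faces and degeneracies of $\Delta_{\mathrm{3}}$; and the ``none otherwise'' half of (b) is automatic, since by local discreteness $\Delta_{\mathrm{3}}$ admits a $2$-cell $\mathtt{e}_{\Delta_{\mathrm{Str}}}f\Rightarrow\mathtt{e}_{\Delta_{\mathrm{Str}}}g$ only when these $1$-cells are equal. The work is the existence and uniqueness of the $2$-cell $f\Rightarrow g$ when the images agree.

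For this I would first describe the $1$-cells of $\Delta_{\mathrm{Str}}$ explicitly. The shape of the generating graph (only $\dd^0,\dd^1$ land in $\doiss$, only $\ss^0$ is a non-identity arrow into $\umm$, only $\DD^0,\DD^1,\DD^2$ land in $\tress$, and nothing leaves $\tress$) forces every $1$-cell to be an alternating word $\DD^{?}\dd^{i_1}\ss^0\dd^{i_2}\ss^0\cdots$, with the leading and trailing pieces present or absent according to its source and target. Orienting the generating $2$-cells reductively — $\mathfrak{n}_k\colon\ss^0\dd^k\to\id$ and $\sigma_{01}\colon\DD^1\dd^0\to\DD^0\dd^0$, $\sigma_{02}\colon\DD^2\dd^0\to\DD^0\dd^1$, $\sigma_{12}\colon\DD^2\dd^1\to\DD^1\dd^1$ — yields a rewriting system on these words which is terminating (via an evident decreasing measure) and which has no critical pairs, since all left-hand sides are two letters long, begin with $\ss^0$ or a $\DD$-letter and end with a $\dd$-letter, so no two redexes can overlap. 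Hence the system is orthogonal and terminating, so every $1$-cell has a unique normal form; the normal forms are readily enumerated, and the simplicial identities show that $\mathtt{e}_{\Delta_{\mathrm{Str}}}$ carries them bijectively onto the morphisms of $\Delta_{\mathrm{3}}$. Since the generating $2$-cells are invertible, each hom-category $\Delta_{\mathrm{Str}}(x,y)$ is a groupoid whose connected components are precisely the fibres of the normal-form map; this establishes (a) and reduces (b) to the claim that these hom-groupoids are posetal.

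That posetality is the main obstacle: it is a coherence statement ruling out nontrivial $2$-automorphisms built by pasting the $\sigma$'s, the $\mathfrak{n}$'s and their inverses. I would settle it by the rewriting-theoretic coherence principle (Squier): a terminating rewriting system with no critical pairs is coherent, so the free $2$-category it generates has at most one $2$-cell between any two parallel $1$-cells; concretely, every $2$-cell $f\Rightarrow g$ must equal $\rho_g^{-1}\cdot\rho_f$, where $\rho_f\colon f\Rightarrow\mathrm{nf}(f)$ is the canonical composite of whiskered reduction $2$-cells, the needed identifications of $2$-cells coming from the one-step confluences, which here are trivial because the redexes involved are disjoint. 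A less computational alternative is to invoke the general strictification fact that the free $2$-category on a presentation of a $1$-category $\mathcal{C}$ — one invertible generating $2$-cell per defining relation — is a cofibrant replacement of $\mathcal{C}$, hence biequivalent to it: $\Delta_{\mathrm{Str}}$ is exactly this construction applied to the standard presentation of $\Delta_{\mathrm{3}}$ by faces and degeneracies, which also explains why $2$-functors $\Delta_{\mathrm{Str}}\to\AAA$ coincide with pseudofunctors $\Delta_{\mathrm{3}}\to\AAA$ and so motivates the introduction of $\Delta_{\mathrm{Str}}$.
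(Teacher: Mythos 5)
Your proposal is correct, but there is no proof in the paper to compare it with: the lemma is stated and immediately followed by Remark \ref{biequivalencedeltadeltastr}, the author evidently regarding the verification as routine, so your argument simply supplies the omitted details. The structure is sound throughout: well-definedness correctly comes down to the simplicial identities $d^1d^0=d^0d^0$, $d^2d^0=d^0d^1$, $d^2d^1=d^1d^1$ and $s^0d^0=s^0d^1=\id_{\mathsf{1}}$ giving the identity $2$-cells their prescribed boundaries; local equivalence does reduce, by local discreteness of $\Delta_{\mathrm{3}}$, to surjectivity on $1$-cells plus the hom-groupoids of $\Delta_{\mathrm{Str}}$ being codiscrete on the fibres of $\mathtt{e}_{\Delta_{\mathrm{Str}}}$; and your rewriting system genuinely has no critical pairs (every left-hand side ends in a $\dd$-letter and begins with $\ss^0$ or a $\DD$-letter, and a word contains at most one $\DD$-letter, necessarily leftmost) and terminates, e.g.\ by the lexicographic measure given by word length followed by the superscript of the leading $\DD$-letter. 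The appeal to Squier-type coherence is legitimate — an empty set of critical branchings is a homotopy basis, the residual local confluences involving disjoint redexes being resolved by the interchange law — although for hom-categories this small a direct enumeration would serve equally well. Two points you leave implicit are finite checks worth flagging: the injectivity of $\mathtt{e}_{\Delta_{\mathrm{Str}}}$ on normal forms (needed for full faithfulness, not only surjectivity), and the fact that surjectivity on $1$-cells uses that $\Delta_{\mathrm{3}}$ is by definition \emph{generated} by the displayed faces and degeneracies. Your closing observation, that $\Delta_{\mathrm{Str}}$ is exactly the free $(2,1)$-categorical resolution of the standard presentation of $\Delta_{\mathrm{3}}$, is precisely the conceptual content the paper exploits in Remark \ref{biequivalencedeltadeltastr} to replace pseudofunctors $\Delta_{\mathrm{3}}\to\AAA$ by $2$-functors $\Delta_{\mathrm{Str}}\to\AAA$.
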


\begin{rem}\label{biequivalencedeltadeltastr}
It should be noted that, given a $2$-category $\AAA $ and a pseudofunctor $\BBBB  : \Delta _ {\mathrm{3}}\to \AAA $, we can replace it by a $2$-functor $\AAAA  : \Delta _ \mathrm{Str}\to \AAA $ defined by
\begin{equation*}
\begin{aligned}
\AAAA  (\dd ^k ) &:= & \BBBB \circ \mathtt{e}_{\Delta _ \mathrm{Str} } (\dd ^k )\\
\AAAA (\DD ^k ) &:= & \BBBB \circ \mathtt{e}_{\Delta _ \mathrm{Str} } (\DD ^k )\\
\AAAA (\ss ^0 ) &:= & \BBBB \circ \mathtt{e}_{\Delta _ \mathrm{Str} } (\ss ^0 )
\end{aligned}
\qquad\qquad\qquad
\begin{aligned}			
\AAAA (\sigma_{ki} ) &: = & \left(\mathfrak{b}  _{d^i d^{k-1} }\right) ^{-1} &\cdot \mathfrak{b}  _{d^k d^i} \\
\AAAA ( \mathfrak{n}_k ) &: = &  \mathfrak{b} _{s^0 d^k} &							
\end{aligned}
\end{equation*}
in which, for each pair of morphisms $ (v, v') $ of $\Delta _ {\mathrm{3} }$,  $\mathfrak{b} _{v v' }$ is the invertible $2$-cell
$$\mathfrak{b}  _{v v' } : \BBBB  (v)\BBBB  (v ')\Rightarrow  \BBBB  (vv') $$
component of the pseudofunctor $\BBBB $ (see, for instance, \cite[Def.~2.1]{MR3491845}).
Whenever we refer to a pseudofunctor (truncated pseudocosimplicial category) $\Delta _ \mathrm{3}\to \Cat$
in the introduction, we actually  consider the replacement 
$2$-functor $\Delta_{\mathrm{Str}}\to \Cat $. For this work, there is no need for further considerations on coherence theorems.
\end{rem}

\section{Weighted colimits and the two-dimensional cokernel diagram}\label{2dimensionallimitscolimits}
The main result of this paper relates the factorization given by the \textit{lax descent object} of the \textit{two-dimensional cokernel diagram} of a morphism with the \textit{semantic factorization}, in the presence of \textit{opcomma objects} and \textit{pushouts} inside a $2$-category $\AAA $. In other words, it relates the \textit{lax descent objects}, the \textit{Eilenberg-Moore objects}, the opcomma objects and pushouts. These are known to be examples of $2$-dimensional limits and colimits.
Hence, in this section,  before defining the \textit{two-dimensional cokernel diagram} and the factorization induced by its lax descent object,
we recall the basics of the special weighted (co)limits related to the definitions.

Two dimensional limits are the same as weighted limits in the $\Cat $-enriched context~\cite{MR0401868, MR998024}. 
Assuming that $\SSS $ is a small $2$-category, let $\WWWW : \SSS\to \Cat, \mathcal{D} : \SSS \to \Cat $ and $\mathcal{D} ' : \SSS^\op \to\AAA $ be $2$-functors. If it exists, we denote the \textit{weighted limit} of $\mathcal{D} $ with weight $\WWWW $ by  $\lim\left(\WWWW, \mathcal{D}\right) $. Dually, we denote by $\colim \left(\WWWW , \mathcal{D} '\right) $ the \textit{weighted colimit} of $\mathcal{D} ' $ provided that it exists. Recall  that $\colim \left(\WWWW , \mathcal{D} '\right) $ is a weighted colimit if and only if we have a $2$-natural isomorphism (in $z$) 
$$\AAA (\colim \left( \WWWW , \mathcal{D} '\right) , z )\cong \left[\SSS ^\op, \Cat \right](\WWWW , \AAA (\mathcal{D} ' -, z  ) )\cong \lim \left(\WWWW , \AAA (\mathcal{D} ' -, z  )\right)
$$
in which $\left[\SSS ^\op , \Cat \right]$ denotes the $2$-category of $2$-functors $\SSS ^\op\to\Cat $, $2$-natural transformations and modifications. By the Yoneda embedding of $2$-categories, if a two dimensional (co)limit exists, it is unique up to isomorphism. It is also important to keep in mind the fact that existing
weighted limits in $\AAA $ are created by the Yoneda embedding $\AAA \to \left[ \AAA ^\op , \Cat \right] $,
since it preserves weighted limits and is locally an isomorphism ($\Cat$-fully faithful).

Recall that $\Cat $ has all weighted colimits and all weighted limits.
Moreover, in any $2$-category $\AAA $, every weighted colimit 
can be constructed from some special $2$-colimits provided that they exist: namely, tensor coproducts (with $\mathsf{2} $), coequalizers
and (conical) coproducts. Dually, weighted limits can be constructed from cotensor products (with $\mathsf{2}$), equalizers and
products provided that they exist.

\subsection{Tensorial coproducts}
\textit{Tensorial products} and \textit{tensorial coproducts} are weighted limits and colimits with the domain/shape $\mathsf{1} $.
So, in this case, the weight of a tensorial coproduct is entirely defined by a category $a$ in $\Cat $. If
$b $ is an object of $\mathbb{A} $, assuming its existence, we usually denote by $a\otimes b$ the tensorial coproduct, while the dual,
the cotensorial product, is denoted by $a\pitchfork b $. 

Clearly, if $b$ is an object of $\Cat $, the tensorial coproduct $a\otimes b$  in $\Cat $ is isomorphic to the (conical) product $a\times b $, while $a\pitchfork b \cong \Cat[a,b] $.

\subsection{Pushouts and coproducts}
Two dimensional conical (co)limits are just weighted limits with a weight constantly equal to the terminal category $\mathsf{1} $. Hence the weight/shape of a two dimensional conical 
(co)limits is entirely defined by the domain of the diagram.

The existence of a $2$-dimensional conical (co)limit of a $2$-functor $\mathcal{D}: \SSS\to \mathbb{A} $ defined in a locally discrete $2$-category $\SSS $  (\textit{i.e.} a diagram defined in a category $\SSS $) in a $2$-category $\mathbb{A} $ is stronger than the
existence of the $1$-dimensional conical (co)limit of the underlying functor of the $2$-functor $\mathcal{D} $ in the underlying category of $\mathbb{A} $.
However, in the presence of the former, by the Yoneda lemma for $2$-categories, both are isomorphic.

As in the $1$-dimensional case, the conical $2$-colimits of diagrams shaped by discrete categories are 
called \textit{coproducts}, while the conical $2$-colimits of diagrams with the domain being the opposite of the category $\SSSS $ defined by \eqref{eq:category-S} gives the notion
of \textit{pushout}. 
\begin{equation}\label{eq:category-S} 
\xymatrix@=1.5em{ & \dois &\\ \zero \ar[ru]^{\mathtt{d}_1} && \um\ar[lu]_{\mathtt{d}_0} }
\end{equation} 

Recall that, if $p_0 :e\to b_0 $, $p_1 : e\to b_1 $ are morphisms of a $2$-category $\AAA $, assuming its existence, the \textit{pushout} of 
$p_1 $ along $p_0 $
is an object $\displaystyle b_0\sqcup _{(p_0, p_1)} b_1 $, also denoted by  $\displaystyle p_0\sqcup_{e}p_1 $,
satisfying the following:  
there are $1$-cells 
\begin{center}
$ \displaystyle \mathfrak{d} ^0_ {p_0\sqcup_{e}p_1 } :  b_1\to b_0\sqcup _{(p_0, p_1)} b_1 $ and $\displaystyle\mathfrak{d}_{p_0\sqcup_{e}p_1} ^1:  b_0\to b_0\sqcup _{(p_0, p_1)} b_1 $ 
\end{center}
making the diagram
%
\pu
\begin{equation}
\diag{pushoutdiagramdefinition}
\end{equation}
commutative and, 
for every object $y $ and every pair of $2$-cells $$(\xi _ 0 : h_0\Rightarrow h_0' : b_1\to y,\, \xi _ 1 : h_1\Rightarrow h_1' : b_0\to y ) $$
such that the equation
%
\pu
%
\pu
\begin{equation}
\diag{twocellofpushoutdefinitionleftside}\quad = \quad \diag{twocellpushoutdefinitionrightside}
\end{equation}
holds, 
there is a unique $2$-cell $\xi : h\Rightarrow h' :  b_0\sqcup_{(p_0,p_1)}b_1\to y $ satisfying the equations
%
\pu
%
\pu
\begin{equation}
\xi_1 \, = \diag{xiumpushout}\qquad\mbox{and}\qquad \diag{xizeropushout} = \,\xi_0.
\end{equation}

\subsection{Opcomma objects}\label{OPCOMMAOBJECTSSECTIONWEIGHTEDLIMITS}
We consider the $2$-category $\SSSS $ defined in \eqref{eq:category-S} and the
weight 
$P:\SSSS\to \Cat $, defined by  $P(\um ): = P(\zero ): = \mathsf{1} $, $P(\dois ): =\mathsf{2} $, and 
$P(\mathtt{d}_0 )= d^0 , P(\mathtt{d}_1 ) =  d^1  $; that is to say, the weight
$$\xymatrix@=1.5em{ & \mathsf{2} &\\ \mathsf{1} \ar[ru]^{d^1 } && \mathsf{1}\ar[lu]_{d^0} } $$
in which $d^0 $ and $d^1 $ are respectively the inclusion of the codomain and the inclusion of the domain of the non-trivial morphism of $\mathsf{2} $ (as defined in Section \ref{sectioncategoriesdelta}).

Limits
weighted by $P $ are the well known \textit{comma objects}, while the colimits weighted by $P$ are
called \textit{opcomma objects}.
By definition, if $p_0 :e\to b_0 $, $p_1 : e\to b_1 $ are morphisms of a $2$-category $\AAA $ and   $p_0\uparrow p_1 $ 
is the opcomma object of $p_1 $ along $p_0 $, then  $\AAA ( p_0\uparrow p_1 , - ) $ is the 
comma object of   $\AAA ( p_1 , - )$ along  $\AAA ( p_0 , - )$. This means that: 
there are $1$-cells 
\begin{equation}
\delta _{p_0\uparrow p_1} ^0 :  b_1\to p_0\uparrow p_1,\quad \delta _{p_0\uparrow p_1} ^1:  b_0\to p_0\uparrow p_1 
\end{equation}
and a $2$-cell 
%
\pu
\begin{equation}\label{universal_two_cell_opcomma_object}
\diag{opcommatwocelldefinition}
\end{equation}
satisfying the following: 
\begin{enumerate} 
\item For every triple $(h_0: b_1\to y,\, h_1: b_0\to y, \,\beta : h_1 p_0\Rightarrow h_0 p_1 ) $ in which $h_0, h_1 $ are morphisms
and $\beta $ is a $2$-cell of $\AAA $, there is a unique morphism $h:  p_0\uparrow p_1\to y $ such that the equations
$h_0 = h\cdot\delta _ {p_0\uparrow p_1} ^0 $, $h_1 = h\cdot \delta _{p_0\uparrow p_1} ^1 $ and
%
\pu
%
\pu
\begin{equation}
\diag{opcommatwocelldefinitionuniversalproperty}\quad =\quad \diag{opcommatwocelldefinitionuniversalpropertyrightside}
\end{equation}
hold.
\item For every pair of $2$-cells $$(\xi _ 0 : h\cdot \delta_{p_0\uparrow{p_1}}^0\Rightarrow h'\cdot \delta_{p_0\uparrow{p_1}}^0 : b_1\to y,\quad \xi _ 1 : h\cdot \delta_{p_0\uparrow{p_1}}^1\Rightarrow h'\cdot \delta_{p_0\uparrow{p_1}}^1 : b_0\to y ) $$ such that
%
\pu
%
\pu
\begin{equation}\label{definition_for_twocells_for_opcommaobjects}
\diag{twocellofopcommatwodefinitionleftsidenovo}\quad=\quad\diag{twocellofopcommatwodefinitionrightsidenovo}
\end{equation}
holds, 
there is a unique $2$-cell $\xi : h\Rightarrow h' :  p_0\uparrow  p_1\to y $ such that
%
\pu
%
\pu
\begin{equation*}
\xi_1 \, = \diag{xiumopcomma}\qquad\mbox{and}\qquad \diag{xizeroopcomma} = \,\xi_0.
\end{equation*}
\end{enumerate}

\begin{rem}
Since $\Cat $ has all weighted colimits and limits, it has opcomma objects. More generally, 
if any $2$-category $\AAA$ has tensorial coproducts and pushouts, 
then $\AAA$ has opcomma objects.

More precisely, assuming that the tensorial coproduct $\mathsf{2}\otimes e $ exists in $\AAA $, 
we have the universal $2$-cell
$ d^1\otimes e\Rightarrow d^0\otimes e :  e\to \mathsf{2}\otimes e $
given by the image of the identity $\mathsf{2}\otimes e\to \mathsf{2}\otimes e $ by the isomorphism
$$\AAA (\mathsf{2}\otimes e, \mathsf{2}\otimes e )\cong \Cat\left[ \mathsf{2} , \AAA (e, \mathsf{2}\otimes e)
\right] . $$
If it exists, the  conical colimit of the diagram below is the opcomma object $p_0\uparrow  p_1 $ of  $p_1 : e\to b_1 $ along $p_0 : e\to b_0 $. 
$$\xymatrix{
&
e
\ar[ld]|-{p_1}
\ar[rd]|-{d^1\otimes e}
&
&
e
\ar[rd]|-{p_0}
\ar[ld]|-{d^0\otimes e}
&
\\
b_1
&
&
\mathsf{2}\otimes e
&
&
b_0
}$$
\end{rem}

\subsection{Lax descent objects}\label{laxdescentobjectsdetails}
We consider the $2$-category $\Delta _ \mathrm{Str} $ of Definition \ref{deltaestritodescidasecao1}
and we define the weight $\mathfrak{D} : \Delta _ \mathrm{Str}\to \Cat $ by
$$\xymatrix@C=2.8em{ \Delta _ \mathrm{Str}(\umm , \umm )\times \mathsf{1} 
\ar@<2.2 ex>[rrr]^-{\id _ {\Delta _ \mathrm{Str}(\umm , \umm )}\times d^0}
\ar@<-2.2ex>[rrr]_-{\id_{\Delta _ \mathrm{Str}(\umm ,\umm )}\times d^1} &&& \Delta _ \mathrm{Str}(\umm ,\umm )\times \mathsf{2} \ar[lll]|-{\overline{S}   }
\ar@<2.2 ex>[rrr]^{ \id _ {\Delta _ \mathrm{Str}(\umm ,\umm )}\times \overline{D}^0 }\ar[rrr]|-{\id _ {\Delta _ \mathrm{Str}(\umm ,\umm )}\times\overline{D}^1 }\ar@<-2.2ex>[rrr]_{\id _ {\Delta _ \mathrm{Str}(\umm ,\umm )}\times\overline{D}^2 } 
&&& \Delta _ \mathrm{Str}(\umm ,\umm )\times \left\langle\mathsf{3}\right\rangle } $$
in which:
\begin{itemize}
\renewcommand\labelitemi{--}
\item The functor $\overline{S} : \Delta _ \mathrm{Str}(\umm ,\umm )\times \mathsf{2}\to \Delta _ \mathrm{Str}(\umm ,\umm )\times \mathsf{1} $
 is defined by $$\overline{S} (\overline{v}: v\cong v' , \mathsf{0}\to\mathsf{1} ) = \left(\left( \mathfrak{n}_0^{-1}\cdot \mathfrak{n}_1 \right)\ast \overline{v}, \id _ {\mathsf{0}}\right)  = \left( s^0 d ^1  v\cong s^0 d^0  v' , \id _ {\mathsf{0}}\right). $$
\item $\left\langle\mathsf{3}\right\rangle $  is the category corresponding to the preordered set 
$$ \left\{(\mathsf{i}, \mathsf{k} )\in \left\{ \mathsf{0}, \mathsf{1} , \mathsf{2}\right\} ^2 : \mathsf{i}\neq 
\mathsf{k}\right\} $$
in which the preorder induced by the first coordinate, that is to say, $(\mathsf{i}, \mathsf{k})\leq (\mathsf{i}', \mathsf{k}') $
if $\mathsf{i}\leq \mathsf{i}' $. In other words, the category $\left\langle\mathsf{3}\right\rangle  $ is defined 
by the preordered set below.
$$\xymatrix{
&
(\mathsf{2}, \mathsf{0} )
\ar@{<->}[r]^-{\cong }
&
(\mathsf{2}, \mathsf{1} ) 
\\
(\mathsf{1}, \mathsf{2} )
\ar@{<->}[r]^-{\cong }
&
(\mathsf{1}, \mathsf{0} )
\ar[u]
&
\\
(\mathsf{0}, \mathsf{2})
\ar[u]
\ar@{<->}[rr]_-{\cong }
&
&
(\mathsf{0}, \mathsf{1}).
\ar[uu]
}$$
\item 
The functors 
$\overline{D}^0, \overline{D}^1 , \overline{D}^2 : \mathsf{2}\to \left\langle\mathsf{3}\right\rangle $ are defined by 
\begin{center}
$\overline{D}^0 (\mathsf{0}\to\mathsf{1} ) = \left((\mathsf{1} , \mathsf{0} )\to (\mathsf{2}, \mathsf{0})\right)$, $
\overline{D}^2 (\mathsf{0}\to\mathsf{1} ) = \left((\mathsf{0} , \mathsf{2} )\to (\mathsf{1}, \mathsf{2})\right)
 $ 

and

$\overline{D}^1 (\mathsf{0}\to\mathsf{1} ) = \left((\mathsf{0} , \mathsf{1} )\to (\mathsf{2}, \mathsf{1})\right). $
\end{center}
\item The natural transformations $\mathfrak{D} (\sigma _ {01} )$, $\mathfrak{D} (\sigma _ {02} )$ and $\mathfrak{D} (\sigma _ {12} )$
are defined by
$$\mathfrak{D} (\sigma _ {ij} ):= \id _ {\id _ {\Delta _ \mathrm{Str}(\umm ,\umm )}}\times \overline{\mathfrak{D} (\sigma _ {ij} )} , $$
in which
\begin{center}
$\overline{\mathfrak{D} (\sigma _ {01} )}_{\mathsf{0}}:=  \left((\mathsf{2}, \mathsf{1})\cong (\mathsf{2}, \mathsf{0})\right) $, 
$\overline{\mathfrak{D} (\sigma _ {02} )}_ {\mathsf{0}}  : =    \left((\mathsf{1}, \mathsf{2})\cong (\mathsf{1}, \mathsf{0})\right)     $ 

and 

$\overline{\mathfrak{D} (\sigma _ {12} )}_ {\mathsf{0}}:= \left((\mathsf{0}, \mathsf{2})\cong (\mathsf{0}, \mathsf{1})\right) $.
\end{center}

\item The natural transformation
$$\mathfrak{D} (\mathfrak{n} _ {i} ) 
: \overline{S}\circ \left(\id _ {\Delta _ \mathrm{Str}(\umm , \umm )}\times d^i\right)\Rightarrow \id  _ {\Delta _ \mathrm{Str}(\umm , \umm )\times \mathsf{1} } $$  
is defined by
$
\mathfrak{D} (\mathfrak{n} _ {i} )_{  (v, \mathsf{0} )   } : = \left( \mathfrak{n} _ {i}\ast \id _ v, \id _{\mathsf{0}} \right)  
$.
\end{itemize}
\begin{defi}[Lax descent object]
Given a $2$-functor $\BBBB :  \Delta _ {\mathrm{Str}}\to \AAA $, if it exists, the weighted limit
$\lim (\mathfrak{D} , \BBBB )$ is called the \textit{lax descent object} of $\BBBB $. 
\end{defi}

\begin{rem}
\textit{Since $\Cat $ has all weighted limits, it has lax descent objects.} More precisely,
if $\AAAA :  \Delta _ {\mathrm{Str}}\to \Cat $ is a $2$-functor, 
$$\lim (\mathfrak{D} , \AAAA )\cong \left[\Delta _ {\mathrm{Str}} , \Cat \right] (\mathfrak{D}, \AAAA)  $$
is the category in which:
\begin{enumerate}
\item Objects are $2$-natural transformations $\overline{\psi }: \mathfrak{D} \longrightarrow \AAAA $. We have a bijective correspondence between such $2$-natural transformations and pairs 
$(w, \psi )$ in which $w$ is an object of $ \AAAA (\umm ) $ and $\psi : \AAAA (\dd ^1)(w)\to\AAAA (\dd ^0)(w) $ is a morphism in $ \AAAA (\doiss ) $ satisfying the following equations:	
\begin{itemize}
\item[] Associativity:
$$\AAAA (\DD ^0 )(\psi )\,\cdot \, \AAAA (\sigma _ {02}) _ {w}\, \cdot \, \AAAA (\DD ^2)(\psi ) = \AAAA (\sigma _ {01} )_ {w}\,\cdot\,\AAAA(\DD ^1)(\psi )\,\cdot\,\AAAA (\sigma _ {12}) _ {w};   $$
\normalsize
\item[] Identity:
$$\AAAA(\mathfrak{n} _0) _ {w}\,\cdot\, \AAAA(\ss^0) (\psi ) = \AAAA(\mathfrak{n} _1) _ {w}. $$
\normalsize
\end{itemize}
If $\overline{\psi }: \mathfrak{D}\longrightarrow \AAAA $ is a $2$-natural transformation, we get such pair by the correspondence 
$$\overline{\psi } \mapsto (\overline{\psi }_{\umm } (\id _ {{}_{\umm }} , \mathsf{0} ), \overline{\psi } _ {\doiss } (\id _ {{}_{\umm }} , \mathsf{0}\to\mathsf{1} ) ) .$$

\item The morphisms are modifications. In other words, a morphism $\mathfrak{m} : (w, \psi )\to (w', \psi ') $ is determined by a morphism $\mathfrak{m}: w\to w' $ in $\AAAA (\umm ) $ such that $$\AAAA (\dd ^0)(\mathfrak{m} )\cdot \psi  = \psi ' \cdot \AAAA (\dd ^1)(\mathfrak{m} ) .$$
\end{enumerate}
\end{rem}
By definition,  if $\BBBB : \Delta _ {\mathrm{Str}}\to \AAA $ is a $2$-functor, 
an object $\mathrm{lax}\textrm{-}\mathcal{D}\mathrm{esc}\left(\BBBB \right) $
is the lax descent object $\lim (\mathfrak{D} , \BBBB )$ of $\BBBB $ if
and only if there is a $2$-natural isomorphism (in $y$) $$\AAA (y, \mathrm{lax}\textrm{-}\mathcal{D}\mathrm{esc}\left(\BBBB \right) )\cong 
\lim (\mathfrak{D} , \AAA (y , \BBBB -) ) .$$ 
Equivalently,  a lax descent object of $\BBBB$ is, if it exists, an object $\lim(\mathfrak{D},\BBBB) $ of $\AAA $
together with a pair 
%
\pu
%
\pu
\begin{equation}
\left(\diag{universalonecelllaxdescentda}, \quad\diag{universaltwocelllaxdescentpsi}\right)
\end{equation} 
of a morphism $\dd^{\left(\mathfrak{D},\BBBB\right)}  $ and a $2$-cell $\Psi^{\left(\mathfrak{D},\BBBB\right)} $ in $\AAA $ satisfying the following three properties.
\begin{enumerate} 
\item For each pair
 $$( h: y\to \BBBB (\umm ),\,  \beta : \BBBB ( \dd ^1 )\cdot h\Rightarrow \BBBB ( \dd ^0 )\cdot h ) $$
in which $h$ is a morphism and $\beta $ is a $2$-cell of $\AAA $ such that  
the equations
%
\pu
%
\pu
%
\pu
%
\pu
\begin{align} 
\diag{laxdescentassociativityleftside}&\qquad =\qquad  \diag{laxdescentassociativityrightside}\label{Associativityequationdescent}\\
\diag{laxdescentidentityleftside}&\qquad=\qquad \diag{laxdescentidentityrightside}\label{Identityequationdescent}
\end{align} 
hold, there is a unique morphism $h^{( \BBBB , \beta) }: y\to \lim (\mathfrak{D} , \BBBB ) $ 
making the diagram
%
\pu
\begin{equation}\label{laxdescentgeneralfactorization_equation}
\diag{laxdescentgeneralfactorization}
\end{equation}
%
\pu
%
\pu
commutative and such that \eqref{eq:lax-descent-univ-equation-now-with-number} holds.
\begin{equation}\label{eq:lax-descent-univ-equation-now-with-number}
\diag{equation_on_the_descent_datum_leftside}\quad  =\quad \diag{equation_on_the_descent_datum_rightside}
\end{equation}
\item The pair $(\dd ^{(\mathfrak{D} , \BBBB )}, \Psi ^{(\mathfrak{D} , \BBBB )} ) $ satisfies the \textit{descent associativity} \eqref{Associativityequationdescent} and the \textit{descent identity} \eqref{Identityequationdescent}. In this case, the unique morphism
induced is clearly the identity on $\lim (\mathfrak{D} , \BBBB )$, that is to say, $$\left(\dd ^{(\mathfrak{D} , \BBBB )}\right) ^{( \BBBB , \Psi ^{(\mathfrak{D} , \BBBB )})} = \id_ {\lim (\mathfrak{D} , \BBBB )} .$$
\item Assume that $(h _ 1, \beta _1 )$ and $(h_0 , \beta _ 0 ) $ are pairs satisfying \eqref{Associativityequationdescent} and \eqref{Identityequationdescent}, inducing 
factorizations $h_1 = \dd^{\left(\mathfrak{D},\BBBB\right)}\circ h_1 ^{(\BBBB , \beta  _1)}$
and $h_0 = \dd^{\left(\mathfrak{D},\BBBB\right)}\circ h_0 ^{(\BBBB , \beta  _0)}$.

For each $2$-cell $$\xi : h_1\Rightarrow h_0 : y\to \BBBB (\umm ) $$ satisfying the equation 
%
\pu
%
\pu
\begin{equation}
\diag{equation_two_cell_for_descent_left_side}\quad =\quad \diag{equation_two_cell_for_descent_right_side}
\end{equation}
there is a unique $2$-cell $$\xi ^{(\BBBB , \beta _ 1, \beta_0)} : h_1 ^{(\BBBB , \beta  _1)}\Rightarrow h _0^{(\BBBB , \beta _ 0 )} : y\to\lim (\mathfrak{D} , \BBBB )  $$ 
such that 
%
\pu
\begin{equation}
\diag{xilaxdescenttwocellproperty}=\quad \xi .
\end{equation}
\end{enumerate}

\subsection{The two-dimensional cokernel diagram}\label{subsectiondefinicaodehighercokernel}
Let $p: e\to b $ be a morphism of a $2$-category $\AAA $. $\AAA $ \textit{has the two-dimensional cokernel diagram of $p $} if
$\AAA $ has the opcomma object
%
\pu
\begin{equation}
\diag{opcomma_p_along_p}
\end{equation}
of $p$ along itself and the pushout \eqref{pushoutsquaredefiningbububp} of $\d ^0 $ along $\d ^1$.
%
\pu
\begin{equation}\label{pushoutsquaredefiningbububp}
\diag{pushoutdiagramfordefiningtwodimensionalcokerneldiagram}
\end{equation}
\textit{Henceforth, in this section, we assume that $\AAA $ has the two-dimensional cokernel diagram of $p $}. We denote by
$$\D ^1 : b\uparrow _p b\to b\uparrow _p b \uparrow _p b $$
the unique morphism such that  the equations
\begin{equation}\label{definitionofD1}
\D ^1 \, \d ^1 = \D^2\, \d ^1 ,\quad \D ^1 \, \d ^0 = \D^0\, \d ^0
\end{equation}
%
\pu
%
\pu
\begin{equation}\label{associativityequationhighercokernel}
\diag{twodimensionalcokerneldiagramassociativityleftsidedefinition}\qquad =\qquad \diag{twodimensionalcokerneldiagramassociativityrightsidedefinition}
\end{equation}
hold, while we denote by $\s ^0 :b\uparrow _ p  b\to b $ the unique morphism such that $\s ^0\cdot \d ^1 = \s ^0 \cdot \d ^0 =  \id _ b $ and 
 \eqref{highercokernelidentity} holds.
%
\pu
%
\pu
\begin{equation}\label{highercokernelidentity}
\diag{twodimensionalcokerneldiagramidentityleftsidedefinition}\qquad =\qquad \diag{twodimensionalcokerneldiagramidentityrightsidedefinition}
\end{equation}
\begin{defi}[Two-dimensional cokernel diagram]\label{definicaodehighercokernel}
Consider the $2$-functor $$\mathcal{H}_p ': \Delta _\mathrm{3}\to \AAA $$  defined by $\H_p ' (d^i : \mathsf{1}\to \mathsf{2} ) = \d ^i $, 
$\H_p ' (d^i : \mathsf{2}\to \mathsf{3} ) = \D ^i $ and $\mathcal{H}_p ' (s ^0) = \s ^0 $. The $2$-functor
$$\mathcal{H}_p : = \mathcal{H}_p '\circ \mathtt{e}_{\Delta _ \mathrm{Str} }: \Delta _\textrm{Str}\to \AAA   $$
\begin{equation}\tag{$\mathcal{H}_p$}
\xymatrix{  b\ar@<2 ex>[rrr]|-{\d ^0 }\ar@<-2ex>[rrr]|-{\d ^1} &&&  b\uparrow _ p  b\ar[lll]|-{\s^0 }
\ar@<2 ex>[rrr]|-{\D ^0}\ar[rrr]|- {\D ^1 }\ar@<-2ex>[rrr]|-{\D  ^2} &&& b\uparrow _p b \uparrow _p b  }
\end{equation}
is called the \textit{two-dimensional cokernel diagram  of $p$}.
\end{defi}

\begin{rem}
	This construction was, for instance, already considered in \cite[pag.~135]{MR1314469} under the name \textit{resolution}, specially in the $2$-category of pretoposes and in the $2$-category of exact categories. 
\end{rem}

\begin{rem}\label{123ample}
The $2$-category of categories $\Cat $ has the two-dimensional cokernel diagram  of any functor. In particular, the two-dimensional cokernel diagram of $\id _ \mathsf{1} $ is
\begin{equation}\tag{$\H _ {\id _ \mathsf{1} }$}
\xymatrix{  \mathsf{1}\ar@<2ex>[rr]|-{d^0}\ar@<-2ex>[rr]|-{d^1} && \mathsf{2}\ar[ll]|-{s^0}
\ar@<2 ex>[rr]|-{d ^0}\ar[rr]|-{d ^1}\ar@<-2ex>[rr]|-{d ^2} && \mathsf{3} }
\end{equation}
which is just the usual inclusion 
of the locally discrete $2$-category $\Delta _ \mathrm{3} $ in $\Cat $. 
\end{rem}

By the definitions of $\D ^1 $ and $\s ^0 $ (\eqref{associativityequationhighercokernel} and \eqref{highercokernelidentity}), the 
pair $$(p : e\to b,\quad \upalpha : \d ^1 \cdot p\Rightarrow \d ^0 \cdot p : e \to b\uparrow _ p b ) $$ satisfies the
descent associativity and identity w.r.t. $\mathcal{H}_p : \Delta _\textrm{Str}\to \AAA  $ (that is to say, \eqref{Associativityequationdescent}
and \eqref{Identityequationdescent} w.r.t. $\mathcal{H}_p$). Hence, if $\AAA   $ has the lax descent object
$$(\lim (\mathfrak{D}, \mathcal{H}_p), \dd ^{p},  \Psi ^{p}) : = (\lim (\mathfrak{D}, \mathcal{H}_p), \dd ^{(\mathfrak{D} , \mathcal{H}_p )},  \Psi ^{(\mathfrak{D} , \H _ p )})$$
of $\mathcal{H}_p $, by the universal property of the lax descent object, there is a unique morphism $$p^\mathcal{H}:= p ^{(\mathcal{H}_p  , \upalpha )}  : e\to \lim (\mathfrak{D}, \mathcal{H}_p) $$ such that 
%
\pu
\begin{equation}\label{laxdescentfactorizationofthehighercokernel}
\diag{semanticdescent_factorization_of_p}
\end{equation}
commutes and the equation
%
\pu
%
\pu
\begin{equation}
	\diag{equation_on_the_descent_datum_facorization_twodimensionalcokerneldiagram_leftside} \quad = \quad\diag{equation_on_the_descent_datum_facorization_twodimensionalcokerneldiagram_rightside}
\end{equation}
holds.

Inspired by our main result (Theorem \ref{principal}), we establish the following terminology.
\begin{defi}
Assume that $\AAA$ has the lax descent object of $\mathcal{H}_p $.
The lax	descent factorization induced by $\mathcal{H}_p $ and the universal $2$-cell of the opcomma object $b\uparrow_pb$ given in 
\eqref{laxdescentfactorizationofthehighercokernel} is called the \textit{semantic lax descent factorization of $p$}.
\end{defi}

\begin{rem}
Given an object $x $ and a morphism $p: e\to b $ of $\AAA $ as above, 
the factorization induced 
by the universal property of the 
lax descent category
$\lim (\mathfrak{D}, \AAA (x,\mathcal{H}_p -) )$ and
by the pair 
%
\pu
\begin{equation}
\left( \AAA (x, p),\quad \diag{image_by_of_opcomma_p_along_p_Ax}\right)
\end{equation}
is given by
%
\pu
\begin{equation}\label{imaginglaxdescentfactorizationofthehighercokernel}
\diag{image_semanticdescent_factorization_of_p}
\end{equation}
in which
\begin{eqnarray*}
\AAA (x, p )^{(\AAA (x,\mathcal{H}_p -), \AAA (x, \upalpha )   )}  : & \AAA (x, e ) & \to \lim (\mathfrak{D}, \AAA (x,\mathcal{H}_p -) )\cong \AAA (x, \lim (\mathfrak{D}, \mathcal{H}_p))\\
& g & \mapsto  (p\cdot g, \upalpha \ast \id _ g )\\
& \chi : g\Rightarrow g' & \mapsto \id _ p \ast \chi\\
&&\\
\dd ^{\left(\mathfrak{D} ,  \AAA (x,\mathcal{H}_p -)   \right)}  :   & 
\lim (\mathfrak{D}, \AAA (x,\mathcal{H}_p -) )
 & \to 
\AAA (x, b )\\
& (f, \psi ) & \mapsto  f\\
& \xi  & \mapsto \xi .
\end{eqnarray*}
Clearly, if $\AAA $ has the lax descent object of the two-dimensional cokernel diagram $ \H _p $, the factorization given in
\eqref{imaginglaxdescentfactorizationofthehighercokernel}
is isomorphic
to the factorization
\begin{equation}
\AAA(x,p) = \AAA(x,\dd^{p})\circ \AAA(x,p^\H)
\end{equation}
given by the image of \eqref{laxdescentfactorizationofthehighercokernel} by $\AAA (x,-): \AAA \to \Cat$,
since the Yoneda embedding 
creates any existing lax descent objects in $\AAA $.
\end{rem}

\begin{rem}
It should be noted that, assuming that we can construct $\H _ p $ in $\AAA $, the factorization \eqref{imaginglaxdescentfactorizationofthehighercokernel} 
always exists, since $\Cat $ has lax descent objects (lax descent categories). 

Moreover, since opcomma objects (weighted colimits in general) might not be preserved by the Yoneda embedding, the definition of the factorization given in \eqref{imaginglaxdescentfactorizationofthehighercokernel}
 does not coincide with the definition of semantic lax descent factorization of $\AAA (x, p ) $ in $\Cat $. 

For instance, consider the example of Remark \ref{123ample}. For any object $x $ of $\Cat $,
clearly the opcomma object of $\Cat[x, \id_ {\mathsf{1}}]$ along itself
is isomorphic to the opcomma object of $ \id_ {\mathsf{1}} $ along itself, that is to say, $\mathsf{2} $. Hence,
since there is a category $x $ such that  $\Cat[x, \mathsf{2} ] $ is not isomorphic to $\mathsf{2} $, this
shows that the Yoneda embedding does not preserve the opcomma object $\id_ {\mathsf{1}}\uparrow \id_ {\mathsf{1}} $.
\end{rem}

\begin{rem}[Duality: two-dimensional kernel diagram and the induced factorization]\label{smaenotionoffactorization}
The codual notion of that of the two-dimensional cokernel diagram gives the same notion of
factorization (assuming the existence of the suitable lax descent object): namely, the factorization given in  \eqref{laxdescentfactorizationofthehighercokernel} of the morphism $p$.

The dual concept of the two-dimensional cokernel diagram, the \textit{$2$-dimensional kernel diagram} of $l:b\to e$, if it exists, is a $2$-functor 
$$\H ^l : \Delta _{\textrm{Str}} ^{\op }\to \AAA   $$
$$\xymatrix{  
b\downarrow ^ l  b\downarrow ^ l  b
\ar@<3 ex>[rrr]|-{\D _0 ^{l\downarrow l} }\ar[rrr]|- {\D _1^{l\downarrow l} }\ar@<-3ex>[rrr]|-{\D  _2 ^{l\downarrow l}}
&&&
b\downarrow ^ l  b
\ar@<3 ex>[rrr]|-{\d _0  ^{l\downarrow l } }\ar@<-3ex>[rrr]|-{\d _1 ^{l\downarrow l } }
&&&
b\ar[lll]|-{\s^{l\downarrow l}_0 }  }$$
constructed from 
suitable
comma objects and pullbacks. 

In this case,
we get the lax codescent factorization induced by the $2$-dimensional kernel diagram  of $l$ \eqref{laxdcoescentfactorizationofthehigherkernel}, provided that
the 
lax
\textit{codescent object}  of $\H ^l $ exists. Herein, we call the factorization \eqref{laxdcoescentfactorizationofthehigherkernel} the \textit{semantic lax codescent factorization of $l$}. 
%
\pu
\begin{equation}\label{laxdcoescentfactorizationofthehigherkernel}
\diag{dual_semanticcodescent_factorization_of_l}
\end{equation}
In the special case of the $2$-category $\Cat $, the $2$-dimensional kernel diagram was, for instance, also considered in \cite[pag.~544]{MR2107402}
under the name \textit{higher kernel} and at \cite[Proposition~4.7]{MR1935980} under the name \textit{congruence}.
\end{rem}

\section{Semantic factorization}\label{semanticandfurther}
Assuming that $\AAA $ has suitable Eilenberg-Moore objects, the  
\textit{semantics-structure} adjunction (\textit{e.g.} \cite[Section~2]{MR0299653}) gives rise to what is called herein the \textit{semantic factorization of a tractable morphism $p$}.
In this section, we recall the semantic factorization of morphisms that   have codensity monads. Before doing so, we recall the definition  
 of the Eilenberg-Moore object of a given monad.

\subsection{Eilenberg-Moore object}
Recall that a monad
in a $2$-category $\AAA $ is a quadruple 
%
\pu
%
\pu

$$\t = \left(b,\quad t: b\to b ,\quad \diag{multiplication_of_the_monad_definition},\, \diag{unit_of_the_monad_definition} \right) $$ in which $b$ is an object,
$t$ is a morphism and $m, \eta $ are $2$-cells in $\AAA $ such that the equations
%
\pu

%
\pu

%
\pu
%
\pu
\begin{eqnarray}
\diag{leftsideoftheequationassociativityofmonad}  &\qquad = \qquad& \diag{rightsideoftheequationassociativityofmonad}\\
\diag{firstsideoftheequationidenityofamonad} &\qquad =\qquad &  \diag{secondsideoftheequationidenityofamonad}\qquad = \qquad \id_t
\end{eqnarray}
hold.
A monad can be seen as a $2$-functor
$\t : \mnd\to \AAA $
from the free monad $2$-category $\mnd = \Sigma \Delta  $ to $\AAA $ (\textit{e.g.} \cite[pag.~178]{MR0401868} or \cite[Sect.~6]{2016arXiv160604999L}).  
If it exists, the \textit{Eilenberg-Moore object}, also called the \textit{object of algebras}, is a special weighted limit of $\t $. More precisely, 
given a monad $\t $ in $\AAA $, 
the object $b^\t $ is the Eilenberg-Moore object of $\t $ if and only if
there is a $2$-natural isomorphism (in $y$)
$$\AAA ( y , b ^\t )\cong \AAA (y, b) ^{\AAA (y, \t )} $$
in which $\AAA (y, b) ^{\AAA (y, \t )}$ is the Eilenberg-Moore category of the monad $$(\AAA (y, b ), \AAA (y, t ), \AAA (y, m ), \AAA (y, \eta )) $$ in $\Cat $.
This means that, if the Eilenberg-Moore object $b ^\t$   of $\t = (b, t, m, \eta ) $ exists, it is characterized by the following universal property.
There is a pair 
%
\pu
\begin{equation}
\left(\uu ^\t : b ^\t\to b ,\quad \diag{algebra_multiplication_of_the_monad_definition} \right)
\end{equation}
in which $\uu ^\t $ is a morphism, and $\mu ^\t $ is a $2$-cell in $\AAA $ satisfying the following three properties.
\begin{enumerate} 
\item For each pair
 $( h: y\to b, \beta : t\cdot h\Rightarrow h ) $ 
in which $h$ is a morphism and $\beta $ is a $2$-cell in $\AAA $ making the equations  
%
\pu

%
\pu

%
\pu
%
\pu
\begin{eqnarray}
\diag{algebra_rightsideoftheequationassociativityofmonad}&\qquad =\qquad  & \diag{algebra_leftsideoftheequationassociativityofmonad}\label{Algebraassociativityextensions}\\
\diag{algebra_firstsideoftheequationidenityofamonad} &\qquad  =\qquad  & \diag{algebra_secondsideoftheequationidenityofamonad}\label{i_Algebraassociativityextensions}
\end{eqnarray}	
hold,  there is a unique morphism $h^{( \t , \beta) }: y\to b ^\t $ such that the equation
$\mu^\t\ast \id_{h^{( \t , \beta) }} = \beta $.
It should be noted that, in this case, we get in particular that
%
\pu
\begin{equation}\label{generalalgebrafactorization_universalproperty_equation}
\diag{generalalgebrafactorization_universalproperty}
\end{equation}
commutes.

\item The pair $(\uu ^\t, \mu ^\t ) $ satisfies the \textit{algebra associativity \eqref{Algebraassociativityextensions} and identity \eqref{i_Algebraassociativityextensions} equations}. In this case, the unique morphism
induced is clearly the identity on $b ^\t$.

\item Assume that 
$h_1 ^{(\t , \beta_1  )}, h _0^{(\t , \beta _0 )} : y\to b ^\t $
are morphisms in $\AAA $. For each $2$-cell 
$$\xi :\uu^\t\cdot h_1 ^{(\t , \beta_1  )} \Rightarrow \uu^\t\cdot h _0^{(\t , \beta _0 )} : y\to b $$
such that the equation
%
\pu
%
\pu
\begin{equation}
\diag{righttwocell_algebrastructure_monad_resulting}\quad =\quad \diag{lefttwocell_algebrastructure_monad_resulting}
\end{equation}
is satisfied, there is a unique $2$-cell $$\xi _{(\t, \beta_1 , \beta _0)} : h_1^{(\t , \beta_1  )}\Rightarrow h _0^{(\t , \beta _0 )} : y\to b ^\t  $$ 
such that 
%
\pu
\begin{equation}
\diag{xilaxdescenttwocellproperty_algebras}=\quad \xi .
\end{equation}

\end{enumerate}

\begin{rem}[Duality: Kleisli objects and co-Eilenberg-Moore objects]
The dual of the notion of Eilenberg-Moore object of a monad is called the \textit{Kleisli object} of a monad,
while the codual is called the \textit{co-Eilenberg-Moore object}, or \textit{object of coalgebras}, of a comonad.
In the special case of $\Cat $, these notions coincide with the usual ones (\textit{e.g.} \cite[Section~5]{MR0299653}).
\end{rem}

\subsection{Kan extensions}\label{importantnotationcodensitymonads}

Let $f: z\to y  $ and $g: z\to x $ be morphisms of a $2$-category $\AAA $. The right Kan extension of $f $ along $g $ is, if it exists, 
the right reflection $\ran _ g f $ of $f$ along the functor
$$\AAA (g, y ) : \AAA (x, y)\to \AAA (z, y). $$
This means that the right Kan extension is actually a pair 
$$\left( \ran _ g f : x\to y , \gamma ^{ \ran _ g f } : \left(\ran _ g f\right)\cdot g\Rightarrow f 
\right) $$ 
of a morphism $\ran _ g f $ and a $2$-cell $\gamma ^{ \ran _ g f }$, called the universal $2$-cell,  such that, for each morphism $h: x\to y $ of $\AAA $,
%
\pu
%
\pu
\begin{equation}
\diag{firstsideoftheequation_definition_of_Kan_Extensions}\qquad \mapsto \qquad\diag{secondsideoftheequation_definition_of_Kan_Extensions}
\end{equation}
defines a bijection $\AAA (x,y)(h, \ran _ g f)\cong \AAA (z,y)(h \cdot g,  f) $.

\begin{rem}[Duality: right lifting and left Kan extension]
The dual notion of that of a right Kan extension is called \textit{right lifting} (see \cite[Section~1]{MR0463261}),
while the codual notion is called the \textit{left Kan extension}. 
Finally, of course, we also have the codual notion of the right lifting: the \textit{left lifting}.
\end{rem}

Let $p_0 : e\to b_0, p_1 : e\to b_1 $  be morphisms of a $2$-category $\AAA $.
Assume that $\AAA $ has the opcomma object $p_0\uparrow p_1$ and $$\alpha ^{p_0\uparrow p_1} : \delta ^1 _{p_0\uparrow p_1 }\cdot p_0\Rightarrow    
\delta ^0 _{p_0\uparrow p_1 }\cdot p_1  $$ is the universal $2$-cell that 
gives $p_0\uparrow p_1 $ as the opcomma object of $p_1 $ along $p_0 $, as in \ref{OPCOMMAOBJECTSSECTIONWEIGHTEDLIMITS} (Eq.~\ref{universal_two_cell_opcomma_object}). In this case, we have:

\begin{lem}\label{teoremadeultimomomento}
Given a morphism
$h: p_0\uparrow p_1\to y $,
the following statements are equivalent.
\begin{enumerate}[label=\roman*)]
\item The pair $(h,\, \id _ { h\cdot \delta ^0_{p_0\uparrow p_1}} ) $ is the right Kan extension of 
$h\cdot \delta ^0_{p_0\uparrow p_1}$ along $\delta ^0_{p_0\uparrow p_1}$.\label{1opcommasupport} 
\item The pair $(h\cdot \delta ^1_{p_0\uparrow p_1} ,\,  \id _ h \ast \alpha ^{p_0\uparrow p_1} ) $ is the right Kan 
extension of $h\cdot \delta ^0_{p_0\uparrow p_1}\cdot p_1 $ along $p_0$.\label{2opcommasupport}
\end{enumerate}  
\end{lem}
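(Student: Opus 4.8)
The plan is to derive the equivalence directly from the universal property of the opcomma object $p_0\uparrow p_1$ (spelled out in \ref{OPCOMMAOBJECTSSECTIONWEIGHTEDLIMITS}) together with the defining bijection of the right Kan extension (\ref{importantnotationcodensitymonads}). Both \ref{1opcommasupport} and \ref{2opcommasupport} are, by their very definitions, assertions about hom-categories with a fixed codomain $y$, so no $2$-dimensional (co)limit beyond $p_0\uparrow p_1$ itself enters and the whole argument lives inside $\AAA$. Conceptually this is the familiar fact that, in a comma situation, one projection realises a right Kan extension with identity counit precisely when the complementary leg, paired with the comma $2$-cell, realises a right Kan extension along the opposite arrow.

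Write $\delta^i:=\delta^i_{p_0\uparrow p_1}$ and $\alpha:=\alpha^{p_0\uparrow p_1}$. First I would reformulate \ref{1opcommasupport}. Unwinding the definition of right Kan extension, the pair $(h,\id_{h\delta^0})$ is $\ran_{\delta^0}(h\delta^0)$ if and only if, for every $k\colon p_0\uparrow p_1\to y$, whiskering with $\delta^0$ is a bijection $\AAA(p_0\uparrow p_1,y)(k,h)\cong\AAA(b_1,y)(k\delta^0,h\delta^0)$. By the part of the universal property of $p_0\uparrow p_1$ governing $2$-cells, a $2$-cell $\theta\colon k\Rightarrow h$ is the same datum as a pair $(\theta\ast\id_{\delta^0},\theta\ast\id_{\delta^1})$ satisfying the compatibility equation \eqref{definition_for_twocells_for_opcommaobjects} with $\alpha$; hence \ref{1opcommasupport} is equivalent to the statement that for every $k$ and every $\xi_0\colon k\delta^0\Rightarrow h\delta^0$ there is a unique $\xi_1\colon k\delta^1\Rightarrow h\delta^1$ with
\[
(\id_h\ast\alpha)\cdot(\xi_1\ast\id_{p_0})=(\xi_0\ast\id_{p_1})\cdot(\id_k\ast\alpha).
\]

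With this reformulation the two implications are short. For \ref{2opcommasupport}$\Rightarrow$\ref{1opcommasupport}: given $k$ and $\xi_0$, set $\beta:=(\xi_0\ast\id_{p_1})\cdot(\id_k\ast\alpha)\colon (k\delta^1)p_0\Rightarrow h\delta^0 p_1$ and apply the universal property of $(h\delta^1,\id_h\ast\alpha)$ as $\ran_{p_0}(h\delta^0 p_1)$ to obtain the required unique $\xi_1$. For \ref{1opcommasupport}$\Rightarrow$\ref{2opcommasupport}: given $m\colon b_0\to y$ and $\beta\colon mp_0\Rightarrow h\delta^0 p_1$, apply the $1$-dimensional part of the universal property of $p_0\uparrow p_1$ to the triple $(h\delta^0,m,\beta)$ to produce $k\colon p_0\uparrow p_1\to y$ with $k\delta^0=h\delta^0$, $k\delta^1=m$ and $\id_k\ast\alpha=\beta$; feeding this $k$ and $\xi_0:=\id_{h\delta^0}$ into the reformulated \ref{1opcommasupport} yields a unique $\xi_1\colon m\Rightarrow h\delta^1$ with $(\id_h\ast\alpha)\cdot(\xi_1\ast\id_{p_0})=\beta$, which is exactly the factorisation the Kan extension universal property of \ref{2opcommasupport} demands, and its uniqueness follows from that in \ref{1opcommasupport} since any competing factorisation again witnesses a compatible pair over $\xi_0=\id_{h\delta^0}$.

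I do not expect a genuine obstacle: once the dictionary between the compatibility equation \eqref{definition_for_twocells_for_opcommaobjects} and the defining bijection of the right Kan extension is in place, both directions are immediate and the uniqueness clauses transfer verbatim. The only delicate point is clerical — keeping the variances straight (which leg of $p_0\uparrow p_1$ carries $p_0$ and which carries $p_1$, and the order of vertical pasting), so that the $2$-cell $\beta$ appearing on the Kan-extension side is correctly identified with $(\xi_0\ast\id_{p_1})\cdot(\id_k\ast\alpha)$ rather than with its transpose.
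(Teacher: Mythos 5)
Your proposal is correct and follows essentially the same route as the paper's proof: both directions pivot on translating, via the two-dimensional universal property of $p_0\uparrow p_1$, between $2$-cells into $h$ and compatible pairs over $\delta^0$ and $\delta^1$, with the one-dimensional universal property supplying the auxiliary morphism $k$ (the paper's $h'$) in the direction \ref{1opcommasupport}$\Rightarrow$\ref{2opcommasupport}. The explicit reformulation of \ref{1opcommasupport} you state up front is exactly the pivot the paper uses implicitly, and your handling of the uniqueness clauses matches the paper's.
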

\begin{proof}
Assuming \ref{1opcommasupport}, given a $2$-cell 
$$\check{\beta} : h'_1 \cdot p_0\Rightarrow h\cdot \delta ^0_{p_0\uparrow p_1}\cdot p_1 : e\to y, $$
we conclude, by the universal property of the opcomma object, that there is a unique 
morphism $h': p_0\uparrow p_1\to y $ such that 
%
\pu
%
\pu
\begin{equation}
\diag{firstdiagram_for_the_proof_of_opcomma_result_about_Kan_Extensions}\quad = \quad  \diag{seconddiagram_for_the_proof_of_opcomma_result_about_Kan_Extensions}
\end{equation}
and $h' \cdot \delta ^0_{p_0\uparrow p_1} = h\cdot \delta ^0_{p_0\uparrow p_1} $. 

By the universal property of the Kan extension, there is a unique $2$-cell 
$\underline{\beta }: h'\Rightarrow h $ such that the equation
%
\pu
%
\pu
\begin{equation}
\diag{Leftside_KanExtension_Identity_Support_Lemma_Opcomma_Objects}\quad = \quad
\diag{KanExtension_Identity_Support_Lemma_Opcomma_Objects}
\end{equation}
is satisfied.

By the universal 
property of the opcomma object (see \eqref{definition_for_twocells_for_opcommaobjects}), this means that 
$\underline{\beta }\ast \id _ {\delta ^1_{p_0\uparrow p_1}} $
is the unique $2$-cell such that
%
\pu
%
\pu
\begin{equation}
\diag{uniquetwocellbetaunderlineoftheproofsupportopcomma}\quad =\quad  \diag{uniquetwocellbetaunderlineoftheproofsupportopcommadois}\quad = \quad \check{\beta }.
\end{equation}
This proves \ref{2opcommasupport}.

Reciprocally, assuming \ref{2opcommasupport}, by the universal property of the right Kan extension 
$$\ran_{p_0} \left(h\cdot \delta ^0 _{p_0\uparrow p_1}\cdot p_1\right) =  (h\cdot \delta ^1_{p_0\uparrow p_1} ,\, \id _ h \ast \alpha ^{p_0\uparrow p_1} ) $$ of the hypothesis, we have that, given any $2$-cell 
$$\beta _0 : h'\cdot \delta ^0 _{p_0\uparrow p_1}\Rightarrow   h\cdot \delta ^0 _{p_0\uparrow p_1}, $$
 there is a unique $2$-cell  $\beta _1 : h'\cdot\delta ^1 _{p_0\uparrow p_1}\Rightarrow h\cdot \delta ^1 _{p_0\uparrow p_1}  $
such that
$$\xymatrix{
&
e
\ar[ld]|-{p_0}
\ar[rd]|-{p_1}
&
&
&
e
\ar[ld]|-{p_0}
\ar[rd]|-{p_1}
&
\\
b
\ar[rd]|-{h'_1 }
&
\xRightarrow{\id _ {h'}\ast\, \alpha ^{p_0\uparrow p_1} }
&
b
\ar@/^1.2pc/[ld]|-{h_0}
\ar@/_1.2pc/[ld]|-{h'_0}
\ar@{}[ld]|-{\xRightarrow{\mbox{ } \beta _ 0 \mbox{ } } }
\ar@{}[r]|-{=}
&
b
\ar@/^1.2pc/[rd]|-{h_1}
\ar@/_1.2pc/[rd]|-{h_1'}
\ar@{}[rd]|-{\xRightarrow{\mbox{ } \beta _ 1 \mbox{ } } }
&
\xRightarrow{\id _ {h} \ast\, \alpha ^{p_0\uparrow p_1} }
&
b
\ar[ld]|-{h_0 }
\\
&
y
&
&
&
y
&
} $$
holds, in which,  for each $i\in\left\{ 1,2 \right\} $,  $h_i ': = h ' \cdot \delta ^i _{p_0\uparrow p_1}$ and $h_i : = h  \cdot \delta ^i _{p_0\uparrow p_1}$.

By the universal property of the opcomma $p_0\uparrow p_1 $, this implies that there is a unique $\beta: h'\Rightarrow h $ such that $\beta \ast \id _ {\delta ^0 _{p_0\uparrow p_1} } = \beta _0 $. Hence we get \ref{1opcommasupport}.
\end{proof}

\begin{defi}[Codensity monad]\label{definitioncodensitymonadsection}
A morphism $p: e\to b $ of a $2$-category $\AAA $ \textit{has the codensity monad} if the right Kan extension $( \ran _p p, \gamma )$  of $p$ along itself
exists. 
Assuming that $\AAA $ has the codensity monad of $p$ and
denoting $\ran _p p $  by $t$, we consider: 
\begin{itemize}
\renewcommand\labelitemi{--}
\item  the $2$-cell $m : t^2\Rightarrow t $ such that 
%
\pu
%
\pu

\begin{equation}\label{definitionofcodensitymultiplication}
\diag{left_side_codensity_multiplication_definition}\qquad =\qquad\diag{right_side_codensity_multiplication_definition}
\end{equation}
holds;
\item the $2$-cell $\eta : \id _ b\Rightarrow t $ such that \eqref{definitionofcodensityidentity} holds.
%
\pu
%
\pu
\begin{equation}\label{definitionofcodensityidentity}
\diag{left_side_codensity_identity_definition}\qquad = \qquad \diag{right_side_codensity_identity_definition}
\end{equation}
\end{itemize}
In this case, by the universal property of the right Kan extension of $p$ along itself, 
the quadruple  $\t  = (b , t, m, \eta ) $ is a monad called the \textit{codensity monad} of $p$. 
\end{defi}

Assuming that $\t  = (b , t, m, \eta ) $ is the \textit{codensity monad} of $p: e\to b $ as above,
by \eqref{definitionofcodensitymultiplication}  and \eqref{definitionofcodensityidentity}, 
it is clear that the pair $(p: e\to b ,\, \gamma : t\cdot p\Rightarrow p ) $ satisfies the algebra associativity and
identity equations w.r.t. the monad $\t $ (that is to say, \eqref{i_Algebraassociativityextensions} and \eqref{Algebraassociativityextensions} w.r.t. the monad $\t $). Hence, assuming that $\AAA $ has the Eilenberg-Moore object $\left( b^\t ,\, \uu ^\t : b^\t \to b,\, \mu ^\t \right)  $  of the monad $\t $, by the universal property, there is a unique $p^\t := p^{(\t , \gamma)} $
such that
%
\pu
\begin{equation}\label{semanticfactorizationofamorphismthathascodensitymonad}
\diag{semantic_factorization_of_p_definition_dednat}
\end{equation}
commutes and $\mu ^\t\ast \id _ {p ^\t } = \gamma  $. 
\begin{defi}
The factorization given 
by \eqref{semanticfactorizationofamorphismthathascodensitymonad} is called herein the 
\textit{semantic factorization} of $p$.
\end{defi}
For each object $x $, assuming the existence of the semantic factorization of $p$, we can take its 
image  by the representable $2$-functor
$\AAA (x, -) : \AAA \to \Cat $, getting the factorization 
\begin{equation}\label{simpleimageofthesemanticfactorizationbyrepresentable}
\AAA (x, p ) = \AAA (x,\uu ^\t )\circ \AAA (x,p^\t  ).
\end{equation}
Since the Yoneda embedding creates any existing Eilenberg-Moore object of $\AAA $, the factorization
\eqref{simpleimageofthesemanticfactorizationbyrepresentable} 
coincides up to isomorphism with the
\textit{factorization of $\AAA (x, p ) $ induced by $\left( \AAA (x, p ),\AAA (x, \gamma )\right) $ and $\AAA (x, b) ^{\AAA (x, \t ) }$}; that is to say, the commutative triangle 
%
\pu
\begin{equation}\label{imagingsemanticfactorizationofamorphismthathascodensitymonad}
\diag{imageofAx_semantic_factorization_of_p_definition_dednat}
\end{equation}
which is given by
\begin{eqnarray*}
\AAA (x, p )^{\left( \AAA (x, \t ) ,  \AAA (x, \gamma )\right) }    : 
& \AAA (x, e ) & \to \AAA (x, b) ^{\AAA (x, \t ) }\\
& g & \mapsto  (p\cdot g, \gamma \ast \id _ g )\\
& \chi : g\Rightarrow g' & \mapsto \id _ p \ast \chi \\
&&\\
\uu ^{\AAA (x, \t)} :   & 
\AAA (x, b) ^{\AAA (x, \t ) }
 & \to 
\AAA (x, b )\\
& (f, \beta ) & \mapsto  f\\
& \xi  & \mapsto \xi{.}
\end{eqnarray*}

\begin{rem}
Let $p$ be a morphism of $\AAA $ which has the codensity monad $\t $. Since
 $\Cat $ has Eilenberg-Moore objects,
 the
factorization of $\AAA (x, p ) $ induced by $\left( \AAA (x, p ),\AAA (x, \gamma )\right) $ and $\AAA (x, b) ^{\AAA (x, \t ) }$ as above always
 exists, even if $\AAA $ does not have the Eilenberg-Moore object of $\t $ .
\end{rem}

\begin{rem}[Duality: op-codensity monad]
The codual notion of the notion of codensity monad is that of \textit{density comonad}, which is induced by the left Kan extension
of the morphism along itself, assuming its existence. 

The dual notion is herein called \textit{op-codensity monad}. Notice that, if it exists, the op-codensity monad 
of a morphism is induced by the right lifting of the morphism through itself. Finally, of course, we have also the codual notion of the 
op-codensity monad, called herein the \textit{op-density comonad}.

Therefore, we also have factorizations: assuming the existence of the Kleisli object of the 
op-codensity monad of a morphism, we get the \textit{op-semantic factorization}. Codually, we have the
\textit{co-semantic factorization} of a morphism that has the density comonad, provided that the $2$-category
has its co-Eilenberg-Moore object.
\end{rem}

\subsection{Right adjoint morphism}

Recall that an adjunction inside a $2$-category $\AAA $ is a quadruple 
$$\left(l : b\to e,\, p:e\to b ,\, \varepsilon : lp\Rightarrow \id _ e ,\, \eta : \id _ b\Rightarrow pl \right) $$ in which $l, p $ are $1$-cells and $\varepsilon, \eta $ are $2$-cells of $\AAA $ satisfying the \textit{triangle identities}. This means that 
%
\pu
%
\pu
\begin{equation}
\diag{first_diagram_triangle_identity}\qquad\qquad\qquad \diag{second_diagram_triangle_identity}
\end{equation}
are, respectively, the identities $\id _ l : l\Rightarrow l $ and $\id _ p : p\Rightarrow p $. In this case, $p$ is right adjoint to $l$ and we denote the adjunction
by $( l\dashv p, \varepsilon , \eta ) : b\to e $.

If $(l\dashv p,\varepsilon , \eta ): b\to e $ is an adjunction in a $2$-category $\AAA $, $p$ has the codensity monad and the op-density comonad. More precisely, in this case, the
pair $(pl, \id _p \ast \varepsilon )$ is the right Kan extension of $p$ along itself and 
$(lp , \eta \ast \id _ p ) $ is the left lifting of $p $ through itself. Hence, the codensity monad of $p$ coincides with the monad $\t = (b, pl, \id _ p \ast \varepsilon \ast \id _ l , \eta ) $ induced by the 
adjunction, while the op-density comonad coincides with the comonad $(e, lp, \id _ l \ast \eta \ast \id _ p , \varepsilon ) $  induced by the adjunction. 
Codually, if $(l\dashv p,\varepsilon , \eta ): b\to e $ is an adjunction,  the density comonad and the op-codensity  monad induced by $l: b\to e $ are 
the same of
those
induced
 by the adjunction.

Assuming the existence of the Eilenberg-Moore object of the
monad (codensity monad $\t $) induced by the adjunction  $(l\dashv p,\varepsilon , \eta )$, the semantic factorization is the usual factorization 
of the right adjoint morphism through the object of algebras. 
Dually and codually, assuming the existence of the suitable weighted limits and colimits, we get all the four usual factorizations of $l$ and $p$.

More precisely, the op-semantic factorization of $l: b\to e$ is the usual Kleisli factorization
%
\pu
\begin{equation}
\diag{usualfactorizationKleisli}
\end{equation}
w.r.t.  the induced monad ${(b, pl, \id _ p \ast \varepsilon \ast \id _ l , \eta )}$. Codually and dually, the co-semantic and the coop-semantic factorizations are, respectively,
the usual factorization of $l$  through the co-Eilenberg-Moore object, and the usual factorization of $p$ through the co-Kleisli object of the comonad $(e, lp, \id _ l \ast \eta \ast \id _ p , \varepsilon ) $.

\begin{defi}[Preservation of a Kan extension]\label{PreservationKanExtension}
Let $\left( \ran _ g f , \gamma ^{ \ran _ g f }  \right) $ be the right Kan extension of $f: z\to y$ along $g$ in a $2$-category $\AAA $.
A morphism $\delta : y\to y ' $ \textit{preserves the right Kan extension $ \ran _ g f  $ } if 
%
\pu
\begin{equation}
\left(\delta \circ \ran _ g f,\qquad \diag{firstsideoftheequation_definition_preservation_of_Kan_Extensions}\right)
\end{equation}
gives the right Kan extension of the morphism $\delta \cdot f $ along $g $.
Furthermore, the right Kan extension $\left( \ran _ g f , \gamma ^{ \ran _ g f }  \right) $ 
 is \textit{absolute} if it is preserved by any morphism with domain in $y $.
\end{defi}

\begin{rem}[Duality: respecting liftings]
The dual notion of that of preservation of a Kan extension is that of  \textit{respecting} a
lifting.  If a pair $\left( \rlift _ g f , \upgamma ^{ \rlift _ g f }  \right) $ is the right lifting of $f$ through 
$g$, a morphism $\delta : y'\to y  $ \textit{respects the right lifting of $f $ through $g$ } if 
$\left( \left(\rlift _ g f\right)\cdot \delta , \upgamma ^{ \rlift _ g f }\ast \id _ \delta  \right) $ is the right lifting 
of $f\cdot \delta $ through $g$.
\end{rem}

\begin{rem}
In some contexts, such as in the case of  $2$-categories endowed with Yoneda structures~\cite{MR0463261}, we have a stronger notion of Kan extensions: the \textit{pointwise Kan extensions} (for instance, see \cite[Theorem~I.4.3]{MR0280560} for the case of the $2$-category of $V$-enriched categories).
Although this concept plays a fundamental role in the theory of Kan extensions, we do not use
this notion in our main theorem. However, we mention them in our examples and, herein, \textit{a pointwise Kan extension of a functor in $\Cat $ is just a Kan extension that is preserved by any representable functor}. See \cite[Section~X.5]{MR1712872} 
for basic aspects of pointwise Kan extensions and their constructions via conical (co)limits.
\end{rem}

If $(l\dashv p,\varepsilon , \eta ): b\to e $ is an adjunction in a $2$-category $\AAA $, $p$ preserves any right Kan extension with codomain in $b$.
Furthermore:

\begin{theo}[Dubuc-Street~\cite{MR0280560, MR0463261}]\label{DUBUCSTREET}
If $p:e\to b $ is a morphism in a $2$-category $\AAA $, the following statements are equivalent.
\begin{enumerate}[label=\roman*)]
\item The pair $(l , \varepsilon ) $ is the right Kan extension of $\id _ e  $ along $p$ and it is preserved by $p$.\label{1DUBUCSTREET}
\item The pair $(l , \varepsilon ) $ is the right Kan extension of $\id _ e$ along $p$ and it is absolute.\label{2DUBUCSTREET}
\item The morphism $p$ has a left adjoint $l$, with the counit $\varepsilon : lp\Rightarrow \id _ e $.\label{3DUBUCSTREET}
\end{enumerate}
In particular, if $p: e\to b $  has a left adjoint, then it has the codensity monad and the right Kan extension of $p$ along itself is absolute.
\end{theo}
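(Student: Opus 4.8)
The plan is to prove the cycle of implications \ref{3DUBUCSTREET} $\Rightarrow$ \ref{2DUBUCSTREET} $\Rightarrow$ \ref{1DUBUCSTREET} $\Rightarrow$ \ref{3DUBUCSTREET}, and then to read off the final ``in particular'' assertion. The only tools needed are the universal property of right Kan extensions --- in the form of the natural bijection $\AAA(x,y)(h,\ran_g f)\cong\AAA(z,y)(h\cdot g,f)$, $\beta\mapsto\gamma^{\ran_g f}\cdot(\beta\ast\id_g)$ recalled above --- together with the interchange law for $2$-cells.

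For \ref{3DUBUCSTREET} $\Rightarrow$ \ref{2DUBUCSTREET}, I would first isolate the general fact that, given an adjunction $(l\dashv p,\varepsilon,\eta): b\to e$, for every morphism $f: e\to y$ the pair $(f\cdot l,\ \id_f\ast\varepsilon)$ is the right Kan extension $\ran_p f$ of $f$ along $p$: the assignment $\theta\mapsto(\id_f\ast\varepsilon)\cdot(\theta\ast\id_p)$ is a bijection $\AAA(b,y)(h,fl)\to\AAA(e,y)(hp,f)$ whose two-sided inverse is $\phi\mapsto(\phi\ast\id_l)\cdot(\id_h\ast\eta)$, each composite collapsing by exactly one of the two triangle identities together with interchange. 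Taking $f=\id_e$ gives $(l,\varepsilon)=\ran_p\id_e$. This Kan extension is moreover absolute: for a morphism $\delta$ with domain $e$ (the codomain of $\id_e$) we have $\delta\cdot l=(\delta\cdot\id_e)\cdot l$ and $\id_\delta\ast\varepsilon=\id_\delta\ast(\id_{\id_e}\ast\varepsilon)$, so the same general fact applied to $f=\delta$ exhibits $(\delta l,\ \id_\delta\ast\varepsilon)$ as $\ran_p\delta$, i.e.\ $\delta$ preserves $\ran_p\id_e$ in the sense of Definition \ref{PreservationKanExtension}. The implication \ref{2DUBUCSTREET} $\Rightarrow$ \ref{1DUBUCSTREET} is then immediate, since $p: e\to b$ is in particular a morphism with domain $e$.

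The substantive step is \ref{1DUBUCSTREET} $\Rightarrow$ \ref{3DUBUCSTREET}. Assume $(l,\varepsilon)=\ran_p\id_e$ and that $p$ preserves it, so that $(pl,\ \id_p\ast\varepsilon)$ is the right Kan extension $\ran_p p$. Feeding $\id_b: b\to b$ together with the $2$-cell $\id_p:\id_b\cdot p\Rightarrow p$ into the universal property of $\ran_p p$ produces a unique $2$-cell $\eta:\id_b\Rightarrow pl$ with $(\id_p\ast\varepsilon)\cdot(\eta\ast\id_p)=\id_p$ --- this is the first triangle identity. It remains to establish the second, namely $(\varepsilon\ast\id_l)\cdot(\id_l\ast\eta)=\id_l$. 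Since both sides are $2$-cells $l\Rightarrow l$, and the bijection from the universal property of $\ran_p\id_e=l$, applied with $h=l$, is injective, it suffices to verify that $\varepsilon\cdot\big(((\varepsilon\ast\id_l)\cdot(\id_l\ast\eta))\ast\id_p\big)$ equals the image $\varepsilon\cdot(\id_l\ast\id_p)=\varepsilon$ of $\id_l$ --- i.e.\ that the left-hand side equals $\varepsilon$. Expanding it and applying the interchange identity $\varepsilon\cdot(\varepsilon\ast\id_{lp})=\varepsilon\cdot(\id_{lp}\ast\varepsilon)$ (interchange for $\varepsilon$ against itself), it rewrites as $\varepsilon\cdot\big(\id_l\ast\big((\id_p\ast\varepsilon)\cdot(\eta\ast\id_p)\big)\big)=\varepsilon\cdot(\id_l\ast\id_p)=\varepsilon$, by the first triangle identity. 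Hence $(l\dashv p,\varepsilon,\eta)$ is an adjunction.

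I expect this last $2$-cell manipulation --- extracting the second triangle identity from the first one together with the two universal properties --- to be the only real obstacle; the key move is to use injectivity of the Kan-extension bijection to reduce an equality of $2$-cells into $l$ to an equality of $2$-cells into $\id_e$, which then unwinds by pure interchange. Finally, for the last assertion: if $p$ has a left adjoint $l$, the general fact from the first step, applied with $f=p$, shows that $\ran_p p$ exists --- so by Definition \ref{definitioncodensitymonadsection} $p$ has the codensity monad --- and equals $(pl,\ \id_p\ast\varepsilon)$; running the same computation for morphisms $\delta$ with domain $b$ shows this right Kan extension is absolute.
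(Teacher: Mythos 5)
Your proposal is correct and follows essentially the same route as the paper: (iii)$\Rightarrow$(ii) via the general fact that an adjunction makes $(f\cdot l,\id_f\ast\varepsilon)$ the (absolute) right Kan extension $\ran_p f$, the trivial (ii)$\Rightarrow$(i), and then (i)$\Rightarrow$(iii) by extracting $\eta$ from the universal property of $\ran_p p=(pl,\id_p\ast\varepsilon)$ and deducing the second triangle identity from the first via interchange and the injectivity of the bijection furnished by $\ran_p\id_e=(l,\varepsilon)$. The paper phrases that last step as a pasting-diagram equality followed by an appeal to the same universal property, which is exactly your injectivity argument.
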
 
\begin{proof}
See \cite[Theorem~I.4.1]{MR0280560} or \cite[Propositions~2]{MR0463261}.
\end{proof}

\begin{rem}[Mate correspondence]\label{Mate Correspondence Theorem PhD}
Assume that
\begin{center}
$(l_1\dashv p_1) := (l_1\dashv p_1 , \varepsilon_1  , \eta_1  ): b_1\to e_1   $
and $(l_0\dashv p_0) := (l_0\dashv p_0, \varepsilon _0 , \eta _ 0 ): b_0\to e_0   $ 	
\end{center}
in a $2$-category $\AAA $. Recall that we have the \textit{mate correspondence}~\cite[Proposition~2.1]{MR0357542}.  
More precisely, given $1$-cells $h_b: b_0\to b_1 $ and $ h_e: e_0\to e_1 $ of $\AAA $,
there is a bijection $$\AAA (e_0,b_1)\left(h_b\cdot p_ 0 ,\, p_1\cdot h_e\right)\cong \AAA (b_0,e_1)\left( l_1\cdot h_b,\, h_e\cdot l_ 0 \right) $$ 
defined by
%
\pu
%
\pu
\begin{equation}
\diag{Twocell_Mate_Definition}\qquad\mapsto\qquad \diag{First_Mate_Definition}
\end{equation}
whose inverse is given by 
%
\pu
%
\pu
\begin{equation}
\diag{SecondTwocell_Mate_Definition}\qquad\mapsto\qquad \diag{Second_Mate_Definition}
\end{equation}
The image of a $2$-cell $\beta : h_b\cdot p_ 0 \Rightarrow  p_1\cdot h_e$ by the isomorphism  $\AAA (e_0,b_1)(h_b\cdot p_ 0 , p_1\cdot h_e)\cong \AAA (b_0,e_1)( l_1\cdot h_b, h_e\cdot l_ 0 ) $ above is called the mate of $\beta $ under the adjunction $l_0\dashv p_0 $
and $l_1\dashv p_1 $.
\end{rem}

\section{Main theorems}\label{MainTheoremsSection}
Let $\AAA $ be a $2$-category and $p: e\to b $ a morphism of $\AAA $. Throughout this section, 
we assume that $p$ has the 
codensity monad $\t  = (b , t, m, \eta )$, in which $\left( t, \gamma\right) $ is the right Kan extension of
$p$ along itself. Furthermore, we assume that $\AAA $
has the two-dimensional cokernel diagram $\H _ p :\Delta _\textrm{Str} \to \AAA $ of $p$.

 \textit{We follow the notation respectively established in  
\ref{definitioncodensitymonadsection}, \ref{subsectiondefinicaodehighercokernel} and \ref{definicaodehighercokernel} for the
codensity monad of $p$,	the two-dimensional cokernel diagram of $p$ and the morphisms involved in the diagram. }

 \textit{Moreover, by the universal property of the opcomma object $b\uparrow _ p b $, there is a unique morphism $\ell $ such that
\eqref{eq:defining-l-eqs} and \eqref{eq:defining-l-pasting}  hold. Henceforth, we denoted this morphism by $\ell : b\uparrow _ p b\to b $.}
\begin{equation}\label{eq:defining-l-eqs}
	\ell 	\cdot \d ^0 = \id _ b, \quad \ell \cdot \d ^1 = t, 
\end{equation}
%
\pu
%
\pu
\begin{equation}\label{eq:defining-l-pasting} 
	\diag{firstequationwrtmaintheoremscodensity_monada_Kanextensionopcommaobjects}\qquad =\qquad  \diag{firstequationwrtmaintheoremscodensity_monada_Kanextension} 
\end{equation} 

\begin{prop}\label{rightkanextensionoftheidentityalongd0}
The pair $(\ell , \id _ {\id _ {b}  } ) $ is the right Kan extension of $\id _{b}$  along $\d ^0 : b\to b\uparrow _ p b $.
\end{prop}
\begin{proof}
By Lemma \ref{teoremadeultimomomento}, since $(t , \gamma ) $ is the right Kan extension of $\ell\cdot \d^0\cdot p=p$ along $p$, we get that $(\ell , \gamma ) $
is the right Kan extension of $\ell\cdot \d^0 = \id _ b $ along $\d ^0 $.
\end{proof}

\begin{prop}[Condition]\label{conditionofpreservation}
The right Kan extension $(t , \gamma ) $ of $p$ along itself  is preserved by $\d ^0 : b\to b\uparrow _ p b $ if and only if $\ell $ is left adjoint to $\d ^0 $. In
this case, we have an adjunction $$(\ell\dashv \d ^0, \id _ {\id _ b}, \ueta ): b\uparrow _ p b\to b .$$
\end{prop}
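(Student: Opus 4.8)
The plan is to deduce the statement from the Dubuc--Street theorem (Theorem~\ref{DUBUCSTREET}) applied to the morphism $\d^0\colon b\to b\uparrow_p b$, with $\ell$ playing the role of the candidate left adjoint and $\id_{\id_b}$ the candidate counit. By Proposition~\ref{rightkanextensionoftheidentityalongd0} we already know that $(\ell,\id_{\id_b})$ is the right Kan extension of $\id_b$ along $\d^0$; hence, for this particular $\ell$, conditions~\ref{1DUBUCSTREET} and~\ref{3DUBUCSTREET} of Theorem~\ref{DUBUCSTREET} reduce to ``$\d^0$ preserves the right Kan extension $(\ell,\id_{\id_b})$'' and ``$\ell$ is left adjoint to $\d^0$, with counit $\id_{\id_b}$'', which are therefore equivalent; moreover, in the positive case Theorem~\ref{DUBUCSTREET} provides a unit $\ueta$ making $(\ell\dashv\d^0,\id_{\id_b},\ueta)\colon b\uparrow_p b\to b$ an adjunction. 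So the only real work is to show that ``$\d^0$ preserves $\ran_p p=(t,\gamma)$'' is the same condition as ``$\d^0$ preserves $\ran_{\d^0}\id_b=(\ell,\id_{\id_b})$''.

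For this I would apply Lemma~\ref{teoremadeultimomomento} with $p_0=p_1=p$ and $h:=\d^0\ell\colon b\uparrow_p b\to b\uparrow_p b$. From Proposition~\ref{rightkanextensionoftheidentityalongd0} one has $\ell\cdot\d^0=\id_b$ and $\ell\cdot\d^1=t$, so that $h\cdot\d^0=\d^0$ and $h\cdot\d^1=\d^0 t$; and from the defining $2$-cell equation $\id_\ell\ast\upalpha=\gamma$ of $\ell$ (also in Proposition~\ref{rightkanextensionoftheidentityalongd0}) one computes
\[
\id_h\ast\upalpha=\id_{\d^0\ell}\ast\upalpha=\id_{\d^0}\ast\left(\id_\ell\ast\upalpha\right)=\id_{\d^0}\ast\gamma .
\]
With this, condition~\ref{1opcommasupport} of Lemma~\ref{teoremadeultimomomento} for $h$ reads ``$(\d^0\ell,\id_{\d^0})$ is the right Kan extension of $\d^0$ along $\d^0$'', which is precisely the assertion that $\d^0$ preserves $\ran_{\d^0}\id_b=(\ell,\id_{\id_b})$; while condition~\ref{2opcommasupport} reads ``$(\d^0 t,\id_{\d^0}\ast\gamma)$ is the right Kan extension of $\d^0\cdot p$ along $p$'', which, by the displayed identity, is precisely the assertion that $\d^0$ preserves $\ran_p p=(t,\gamma)$. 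Since Lemma~\ref{teoremadeultimomomento} asserts the equivalence of~\ref{1opcommasupport} and~\ref{2opcommasupport}, the two preservation statements coincide; together with the first paragraph this yields the ``only if'' direction and the claimed adjunction.

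For the ``if'' direction it is cleanest to argue directly rather than running the chain backwards: if $\ell$ is left adjoint to $\d^0$ (with any counit whatsoever), then $\d^0$ is a right adjoint morphism, hence it preserves every right Kan extension of a morphism with codomain $b$ (the fact recalled just before Theorem~\ref{DUBUCSTREET}); in particular it preserves $\ran_p p=(t,\gamma)$, the right Kan extension of $p\colon e\to b$ along itself. The one step requiring any care is the identification in the middle paragraph --- recognizing that the two auxiliary Kan-extension statements produced by Lemma~\ref{teoremadeultimomomento} for $h=\d^0\ell$ are literally the two preservation conditions in question, and verifying the $2$-cell equality $\id_{\d^0}\ast\gamma=\id_h\ast\upalpha$; everything else is a direct appeal to Proposition~\ref{rightkanextensionoftheidentityalongd0}, Lemma~\ref{teoremadeultimomomento} and Theorem~\ref{DUBUCSTREET}.
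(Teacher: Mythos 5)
Your proposal is correct and follows essentially the same route as the paper: both reduce the statement to Lemma \ref{teoremadeultimomomento} applied to $h=\d^0\ell$ (identifying its two conditions with the two preservation statements via Proposition \ref{rightkanextensionoftheidentityalongd0}) and then invoke the Dubuc--Street theorem to convert preservation of $(\ell,\id_{\id_b})$ by $\d^0$ into the adjunction $(\ell\dashv\d^0,\id_{\id_b},\ueta)$. Your explicit computation of $\id_{\d^0\ell}\ast\upalpha=\id_{\d^0}\ast\gamma$ and your separate, direct ``if'' direction (right adjoints preserve right Kan extensions) merely make explicit what the paper leaves implicit.
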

\begin{proof}
It follows from Lemma \ref{teoremadeultimomomento} and Proposition \ref{rightkanextensionoftheidentityalongd0} that $(\d ^0 \cdot \ell , \id _ {\d ^0} ) $ is the right Kan extension of $\d ^0 $ along itself
if and only if 
$(\d^0 \cdot t , \id_{\d ^0} \ast \gamma ) $ is the right Kan extension
of $\d ^0 \cdot p $ along $p$.

By Proposition \ref{rightkanextensionoftheidentityalongd0} and Theorem \ref{DUBUCSTREET}, we know that $\d ^0 $ preserves
the right Kan extension $(\ell ,\id _ {\id _ b} ) $ of $\id _ {b} $ along $\d ^0 $  if and only if $\ell \dashv \d ^0 $ with the counit $\id _ b$.
\end{proof}

We postpone the discussion on examples and counterexamples of morphisms satisfying the condition of Proposition \ref{conditionofpreservation} (see \ref{toyexamples}).
For now, we only observe that:
\begin{prop}\label{okforrightadjoints}
	If the morphism $p: e\to b $ has a left adjoint, then it satisfies the condition of Proposition \ref{conditionofpreservation}. 
	
	In particular,
	since $\Cat $ has the two-dimensional cokernel diagram  of any functor, any right adjoint functor satisfies the condition of 
	Proposition \ref{conditionofpreservation}.
\end{prop}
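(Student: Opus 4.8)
The plan is to deduce the statement directly from the Dubuc--Street theorem (Theorem \ref{DUBUCSTREET}) together with the definition of an absolute Kan extension (Definition \ref{PreservationKanExtension}), so that essentially no computation is needed.

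First I would recall that, by the final clause of Theorem \ref{DUBUCSTREET}, the mere existence of a left adjoint for $p\colon e\to b$ already guarantees that $p$ has the codensity monad and that the right Kan extension $(t,\gamma)=(\ran_p p,\gamma)$ of $p$ along itself is \emph{absolute}. Next I would unwind what this means: by Definition \ref{PreservationKanExtension}, the right Kan extension of $f\colon z\to y$ along $g$ is absolute precisely when it is preserved by every morphism whose domain is the codomain $y$ of $f$ (equivalently, of the extension $\ran_g f$). In the case at hand $f=g=p$ and $y=b$, so $(t,\gamma)$ is preserved by every morphism out of $b$; in particular it is preserved by the universal morphism $\d^0\colon b\to b\uparrow_p b$ of the opcomma object $b\uparrow_p b$. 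This is exactly the hypothesis appearing in Proposition \ref{conditionofpreservation}, so $p$ satisfies that condition, as required.

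For the ``in particular'' clause, I would note that $\Cat$ has all weighted colimits, hence the opcomma object $b\uparrow_p b$ and the pushout \eqref{pushoutsquaredefiningbububp}, and so the two-dimensional cokernel diagram of every functor. Thus the standing assumptions under which Proposition \ref{conditionofpreservation} is formulated hold for any functor $p$ in $\Cat$, and applying the first part with $\AAA=\Cat$ yields the assertion for every right adjoint functor.

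I do not anticipate a genuine obstacle here; the only point requiring care is to line up the variables of Definition \ref{PreservationKanExtension} with the present situation, checking that the codomain of $\ran_p p$ is $b$ so that $\d^0\colon b\to b\uparrow_p b$ genuinely counts among the morphisms witnessing absoluteness.
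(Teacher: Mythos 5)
Your proposal is correct and is essentially the paper's own argument: the proof given there likewise invokes the Dubuc--Street theorem to conclude that $\ran_p p$ exists and is absolute, whence it is preserved by $\d^0$ in particular. The extra care you take in unwinding Definition \ref{PreservationKanExtension} is harmless and fills in exactly the step the paper leaves implicit.
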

\begin{proof}
	By the Dubuc-Street Theorem (Theorem \ref{DUBUCSTREET}), if $p: e\to b $ is a right adjoint morphism of the $2$-category $\AAA $, we get, in particular, that $\ran_p p \cong p\circ \ran_p \id_ e $ exists and is absolute. 
\end{proof}

We prove below that the condition of Proposition \ref{conditionofpreservation} on the morphism $p$ also implies that \textit{$\D^0\d^0 $ has a left adjoint $\ella $}. This result is going to be particularly useful to the proof of our main theorem (Theorem~\ref{principal}).

\begin{prop}\label{definitionella}
Assume that $p: e\to b $ satisfies the condition of Proposition \ref{conditionofpreservation}.
There is an adjunction $\left(\ella\dashv  \D ^0 \cdot \d ^0, \id _ {\id _ b }, \etta  \right) : b\uparrow _ p b\uparrow _ p b\to b $. Moreover, the equations
$$\ella \cdot \D ^2 = t\cdot \ell ,\qquad   \ella \cdot \D ^0  =  \ell , \qquad \etta \ast \id _{\D ^0} =\id_{\D ^0 } \ast\ueta $$   
are satisfied. Furthermore, the $2$-cell $\etta _2 $ given by the pasting
$$\xymatrix{
b\uparrow _p b 
\ar@{{}{ }{}}[dd]|-{=}
\ar@/_1.5pc/[dd]|-{\D ^2 \d ^0  \ell}
\ar@/^1.5pc/[dd]|-{\D ^0 \d^1 \ell}
\ar@/_5pc/@{{}{ }{}}[dd]|-{\xRightarrow{\id _ {\D ^2 }\ast \ueta } }
\ar@/^5pc/@{{}{ }{}}[dd]|-{\xRightarrow{ \id_ {\D ^0 }\ast \ueta\ast \id _ {\d ^1 \ell }   }  }
\ar@/_10.5pc/[dd]|-{\D ^2}
\ar@/^10.5pc/[dd]|-{\D ^0 \d ^0 \ell \d ^1 \ell =   \D ^0 \,\d ^0 \, t\, \ell }
\\
\\
b\uparrow _ p b\uparrow _ p b
}$$
is equal to $\etta\ast \id _ {\D ^2}$.
\end{prop}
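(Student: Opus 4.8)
The plan is to read everything off the pushout that defines $b\uparrow_pb\uparrow_pb$ together with the adjunction $\ell\dashv\d^0$ furnished by Proposition~\ref{conditionofpreservation}; since $b\uparrow_pb\uparrow_pb$ is a pushout rather than an opcomma object, Lemma~\ref{teoremadeultimomomento} is not directly available, but it turns out not to be needed. First I recall that $\ella$ is the unique morphism $b\uparrow_pb\uparrow_pb\to b$ with $\ella\cdot\D^0=\ell$ and $\ella\cdot\D^2=t\cdot\ell$, obtained from the universal property of the pushout \eqref{pushoutsquaredefiningbububp}: the cocone compatibility $\ell\cdot\d^1=(t\ell)\cdot\d^0$ holds because both sides equal $t$, using $\ell\d^1=t$ and $\ell\d^0=\id_b$ from Proposition~\ref{rightkanextensionoftheidentityalongd0}. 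This immediately yields the equations $\ella\cdot\D^2=t\ell$ and $\ella\cdot\D^0=\ell$ and, in particular, $\ella\cdot\D^0\d^0=\ell\d^0=\id_b$, so that a counit $\id_{\id_b}\colon\ella\,(\D^0\d^0)\Rightarrow\id_b$ is well-typed.

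Next I build the unit $\etta\colon\id_{b\uparrow_pb\uparrow_pb}\Rightarrow\D^0\d^0\,\ella$ via the universal property of the pushout for $2$-cells. Because $\D^0\d^0\ella\cdot\D^0=\D^0\d^0\ell$ and $\D^0\d^0\ella\cdot\D^2=\D^0\d^0\,t\ell$, it suffices to give $2$-cells $\xi_1\colon\D^0\Rightarrow\D^0\d^0\ell$ and $\xi_0\colon\D^2\Rightarrow\D^0\d^0\,t\ell$ with $\xi_1\ast\id_{\d^1}=\xi_0\ast\id_{\d^0}$ (the pushout square being $\D^0\d^1=\D^2\d^0$), and to take $\xi_1:=\id_{\D^0}\ast\ueta$ and $\xi_0:=\etta_2$. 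The one real computation is this compatibility check: whiskering the pasting $\etta_2$ by $\d^0$, the triangle identity $\ueta\ast\id_{\d^0}=\id_{\d^0}$ of $\ell\dashv\d^0$ collapses the component $\id_{\D^2}\ast\ueta$ to an identity, while $\ell\d^0=\id_b$ turns the component $\id_{\D^0}\ast\ueta\ast\id_{\d^1\ell}$ into $\id_{\D^0}\ast\ueta\ast\id_{\d^1}$, which is $\xi_1\ast\id_{\d^1}$ by the pushout square. Hence there is a unique such $\etta$, satisfying $\etta\ast\id_{\D^0}=\id_{\D^0}\ast\ueta$ and $\etta\ast\id_{\D^2}=\etta_2$ — precisely the remaining displayed equation and the last assertion of the proposition.

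It then remains to verify the triangle identities for $(\ella\dashv\D^0\d^0,\id_{\id_b},\etta)$; as the counit is an identity, these read $\etta\ast\id_{\D^0\d^0}=\id_{\D^0\d^0}$ and $\id_\ella\ast\etta=\id_\ella$. The first is immediate: $\etta\ast\id_{\D^0\d^0}=(\id_{\D^0}\ast\ueta)\ast\id_{\d^0}=\id_{\D^0}\ast(\ueta\ast\id_{\d^0})=\id_{\D^0\d^0}$. For the second I use the uniqueness clause of the pushout $2$-cell universal property: it is enough that $\id_\ella\ast\etta$ and $\id_\ella$ have the same whiskerings by $\D^0$ and by $\D^2$. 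Whiskering by $\D^0$ gives $\id_{\ella\D^0}\ast\ueta=\id_\ell\ast\ueta=\id_\ell$ by the other triangle identity of $\ell\dashv\d^0$; whiskering by $\D^2$ gives $\id_\ella\ast\etta_2=(\id_\ell\ast\ueta\ast\id_{\d^1\ell})\cdot(\id_{t\ell}\ast\ueta)$, and each factor is an identity — again by $\id_\ell\ast\ueta=\id_\ell$ together with $\ell\d^1=t$ — so the composite is $\id_{t\ell}=\id_{\ella\D^2}$. These agree with $\id_\ella\ast\id_{\D^0}$ and $\id_\ella\ast\id_{\D^2}$, whence $\id_\ella\ast\etta=\id_\ella$ and the proof is complete.

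I do not expect a conceptual obstacle: the whole argument is bookkeeping with the pushout universal property and the two triangle identities $\ueta\ast\id_{\d^0}=\id_{\d^0}$ and $\id_\ell\ast\ueta=\id_\ell$ of $\ell\dashv\d^0$. The only places that demand attention are the compatibility computation $\etta_2\ast\id_{\d^0}=\id_{\D^0}\ast\ueta\ast\id_{\d^1}$ used to produce $\etta$, and keeping domains and codomains straight when whiskering the two-step pasting $\etta_2$ in the last triangle identity.
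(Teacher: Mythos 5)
Your proof is correct and follows essentially the same route as the paper: $\ella$ and $\etta$ are produced from the one- and two-dimensional universal properties of the pushout \eqref{pushoutsquaredefiningbububp} with the same data ($\xi_1=\id_{\D^0}\ast\ueta$, $\xi_0=\etta_2$, compatibility via $\ueta\ast\id_{\d^0}=\id_{\d^0}$ and $\ell\d^0=\id_b$), and the triangle identities are verified exactly as in the paper, the second one by uniqueness of $2$-cells out of the pushout together with $\id_\ell\ast\ueta=\id_\ell$. No gaps.
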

\begin{proof}
By the universal property of the pushout $b\uparrow _ p b\uparrow _ p b$  of $\d ^0 $  along $\d ^1 $ (as defined in \eqref{pushoutsquaredefiningbububp}), we have that:
\begin{itemize}
\renewcommand\labelitemi{--}
\item there is a unique morphism 
$\ella : b\uparrow _ p b\uparrow _ p b\to b  $ 
such that the diagram
%
\pu
\begin{equation}
\diag{basicdiagramthatcommutesforthedefinitionofella} 
\end{equation}
commutes; 
\item there is a unique $2$-cell  
$ \etta : \id _ {b\uparrow _ p b\uparrow _ p b}\Rightarrow \D ^0\d ^0 \cdot \ella $ such that 
$\etta \ast \id _{\D ^0} =\id_{\D ^0 } \ast\ueta $ and $\etta\ast \id _ {\D ^2} = \etta _2  $, because
$ \etta _ 2 \ast \id _{\d ^0 } $ is equal to the composition of $2$-cells
$$\xymatrix@=4em{
\D ^2\cdot \d ^0 
=
\D ^0 \cdot \d ^1
\ar@{=>}[r]^-{\id _ {\D ^0 } \ast \ueta \ast \id _ {\d ^1 }   }
&
\D ^0\,\d ^0 \cdot \ell\, \d ^1
=
\D ^0 \, \d ^0 \cdot t
}$$
since $\id _ {\D ^2 }\ast   \ueta \ast \id _ {\d ^ 0 } = \id _ { \D ^2\,\d ^0  } $, 
and 
$\id_ {\D ^0 }\ast \ueta\ast \id _ {\d ^1 \ell }\ast \id _ {\d ^ 0 } =  \id _ {\D ^0 } \ast \ueta \ast \id _ {\d ^1 } $.  
\end{itemize}

Furthermore, 
$\etta \ast \id _ {\D ^0 \d ^0} =  \etta \ast \id _ {\D ^0 }\ast \id _{ \d ^0} =  \id _ {\D ^0 }\ast\ueta \ast \id _{ \d ^0}$
is a horizontal composition of identities, since $\ueta\ast \id _ {\d ^0} = \id _{\d ^0 }  $. This proves one of the triangle identities for the adjunction
$\ella\dashv \D ^0 \d ^0 $. 

Finally, by the universal property of the pushout $b\uparrow _ p b\uparrow _ p b $ of $\d ^0 $ along $\d ^1 $, 
the $2$-cell $\id _ {\ella }\ast \etta  $ is the identity on $\ella $, since:
\begin{itemize}
\renewcommand\labelitemi{--}
\item $\left(\id _ {\ella }\ast \etta\right) \ast \id _ {\D ^0 } $ is equal to
$$\id _ {\ella }\ast \etta \ast \id _ {\D ^0 }  = \id _ {\ella }\ast  \id_{\D ^0 } \ast\ueta = \id _ {\ell } \ast \ueta  = \id _ {\ell } = \id _ {\ella\D ^0}; $$
\item $\left(\id _ {\ella }\ast \etta\right) \ast \id _ {\D ^2 } = \id _ {\ella }\ast \etta _2  $ is, by the definition of $\ella $ and $\etta _2 $, equal to
$$\xymatrix{
b\uparrow _p b 
\ar[dd]|-{t\cdot \ell}
\ar@/_2.5pc/@{{}{ }{}}[dd]|-{\xRightarrow{\id _ {t }\ast \id_ {\ell } \ast \ueta } }
\ar@/^2.5pc/@{{}{ }{}}[dd]|-{\xRightarrow{ \id_ {\ell }\ast \ueta\ast \id _ {\d ^1 \ell }   }  }
\ar@/_5.5pc/[dd]|-{t\cdot \ell}
\ar@/^5.5pc/[dd]|-{t\cdot \ell}
\\
\\
b
}$$
which is a vertical composition of identities, since $\id _ {\ell }\ast \ueta $ is equal to the identity by the triangle identity of the adjunction
$(\ell\dashv \d ^0 , \id _ {\id _ b} , \ueta ) $. 
\end{itemize}
This completes the proof that $(\ella \dashv \D ^0 \d ^0 ,\, \id _ {\id _ b},\, \etta ) $ is an adjunction.
\end{proof}
In order to prove Theorem \ref{principal}, we consider the $2$-cells defined in Lemma \ref{important2cells}. Before defining them,
it should be noted that:

\begin{lem}[$\ella\cdot \D ^1 $]\label{ellaD1}
Assume that $p$ satisfies the condition of Proposition \ref{conditionofpreservation}. The morphism $\ella\cdot \D ^1 $ is the unique morphism such that the equations
$$\left(\ella \D ^1\right)\cdot \d ^1 = t^2, \quad\left(\ella \D ^1\right) \cdot \d ^0 = \id _ b\quad\mbox{and}\quad \id _ {\ella \D ^1} \ast \upalpha = \left( \id _ {t} \ast \gamma \right) \cdot \gamma $$
hold.
\end{lem}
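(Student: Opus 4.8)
The plan is to identify $\ella\cdot\D^{1}$ by means of the universal property of the opcomma object $b\uparrow_{p}b$ of $p$ along $p$ (see~\ref{OPCOMMAOBJECTSSECTIONWEIGHTEDLIMITS}): a morphism $h\colon b\uparrow_{p}b\to b$ is completely determined by the triple $\bigl(h\cdot\d^{0},\ h\cdot\d^{1},\ \id_{h}\ast\upalpha\bigr)$, whose third component is a $2$-cell $(h\cdot\d^{1})\cdot p\Rightarrow(h\cdot\d^{0})\cdot p$, and, conversely, every such triple is realized by a unique $h$. Hence it is enough to show that $\ella\cdot\D^{1}$ realizes the triple $\bigl(\id_{b},\ t^{2},\ (\id_{t}\ast\gamma)\cdot\gamma\bigr)$; its third entry is a legitimate $2$-cell $t^{2}\cdot p\Rightarrow p$, so this triple does determine a unique morphism $b\uparrow_{p}b\to b$, and once $\ella\cdot\D^{1}$ is seen to realize it, the asserted uniqueness follows at once.

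For the two $1$-cell equations I would combine Eq.~\eqref{definitionofD1}, namely $\D^{1}\d^{0}=\D^{0}\d^{0}$ and $\D^{1}\d^{1}=\D^{2}\d^{1}$, with the identities $\ella\cdot\D^{0}=\ell$ and $\ella\cdot\D^{2}=t\cdot\ell$ of Proposition~\ref{definitionella} and the defining equations $\ell\cdot\d^{0}=\id_{b}$, $\ell\cdot\d^{1}=t$ of Proposition~\ref{rightkanextensionoftheidentityalongd0}, which give
\[
(\ella\cdot\D^{1})\cdot\d^{0}=\ella\cdot\D^{0}\cdot\d^{0}=\ell\cdot\d^{0}=\id_{b},\qquad (\ella\cdot\D^{1})\cdot\d^{1}=\ella\cdot\D^{2}\cdot\d^{1}=t\cdot\ell\cdot\d^{1}=t^{2}.
\]
For the $2$-cell equation, the essential input is the associativity equation Eq.~\eqref{associativityequationhighercokernel} that defines $\D^{1}$: once its boundary $1$-cells are rewritten by Eq.~\eqref{definitionofD1} and by the commutativity $\D^{2}\d^{0}=\D^{0}\d^{1}$ of the pushout square~\eqref{pushoutsquaredefiningbububp}, it amounts to the equality of $2$-cells $\id_{\D^{1}}\ast\upalpha=(\id_{\D^{2}}\ast\upalpha)\cdot(\id_{\D^{0}}\ast\upalpha)$. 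Whiskering this equality on the left by $\ella$ and using the interchange law together with the associativity of horizontal composition turns it into $\id_{\ella\cdot\D^{1}}\ast\upalpha=(\id_{\ella\cdot\D^{2}}\ast\upalpha)\cdot(\id_{\ella\cdot\D^{0}}\ast\upalpha)$; then $\id_{\ella\cdot\D^{0}}\ast\upalpha=\id_{\ell}\ast\upalpha=\gamma$ by the third defining equation of $\ell$ in Proposition~\ref{rightkanextensionoftheidentityalongd0}, and $\id_{\ella\cdot\D^{2}}\ast\upalpha=\id_{t\cdot\ell}\ast\upalpha=\id_{t}\ast(\id_{\ell}\ast\upalpha)=\id_{t}\ast\gamma$, so that $\id_{\ella\cdot\D^{1}}\ast\upalpha=(\id_{t}\ast\gamma)\cdot\gamma$, exactly as claimed.

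I do not expect any conceptual difficulty: the whole argument runs on the universal properties of the opcomma object and of the pushout already exploited in Propositions~\ref{rightkanextensionoftheidentityalongd0} and~\ref{definitionella}. The one step that must be carried out carefully is the $2$-cell computation, where one has to keep straight which occurrences of $\upalpha$ have been whiskered by $\D^{0}$, $\D^{1}$ and $\D^{2}$, and to use Eq.~\eqref{definitionofD1} and the pushout square in a consistent way to identify the composite $1$-cells appearing as the sources and targets of the pasted $2$-cells; getting precisely these identifications right is what legitimizes the short chain of equalities above.
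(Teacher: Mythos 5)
Your proposal is correct and follows essentially the same route as the paper's proof: both derive the two $1$-cell equations from Eq.~\eqref{definitionofD1} together with $\ella\cdot\D^0=\ell$, $\ella\cdot\D^2=t\cdot\ell$, $\ell\cdot\d^0=\id_b$, $\ell\cdot\d^1=t$, and both obtain the $2$-cell equation by whiskering the defining associativity equation \eqref{associativityequationhighercokernel} of $\D^1$ with $\ella$ and using $\id_\ell\ast\upalpha=\gamma$, $\id_{t\ell}\ast\upalpha=\id_t\ast\gamma$. You are merely a bit more explicit than the paper about the uniqueness clause (via the universal property of the opcomma object) and about the interchange-law bookkeeping, which the paper leaves implicit.
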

\begin{proof}
In fact, by the definitions of $\ell $ (see Proposition \ref{rightkanextensionoftheidentityalongd0}) and 
$\ella $ (see Proposition \ref{definitionella}),
 the equations
\begin{eqnarray*}
	\left(\ella \D ^1\right)\cdot \d ^1 = \ella\cdot \D ^2\d ^1 = t\ell\cdot \d ^1 = t^2, &&
\left(\ella \D ^1\right) \cdot \d ^0 = \ella\cdot \D ^0\d ^0 = \ell\cdot \d^0 = \id _ b,  
\\
\id _ {\ella }\ast \id _ {\D ^0 }\ast \upalpha = \id _ {\ell }\ast \upalpha = \gamma &\mbox{ and } &
\id _ {\ella }\ast \id _ {\D ^2 }\ast \upalpha = \id _ {t\ell }\ast \upalpha = \id _ t\ast \gamma
\end{eqnarray*} 
hold. Therefore, by the definition of $\D ^1 $ (see \ref{subsectiondefinicaodehighercokernel} and, more particularly, \eqref{definitionofD1} and \eqref{associativityequationhighercokernel}),  we get the results.
\end{proof}

\begin{lem}[$\uptheta$ and $\uplambda $]\label{important2cells}
Assume that $p: e\to b $ satisfies the condition of Proposition \ref{conditionofpreservation}.
There are $2$-cells
$$
\uptheta : \s ^0 \Rightarrow \ell \, : b\uparrow _ p b\to b , \qquad
\uplambda :  \ella\cdot \D  ^1\Rightarrow \ell \, : b\uparrow _ p b\to b
$$ 
such that the equations
$$\uptheta \ast \id _ {\d ^1} = \eta , \quad \uptheta \ast \id _ {\d ^0} = \id _ {\id _ b}, \quad \uplambda\ast \id _ {\d ^1} = m , \quad \uplambda\ast \id _ {\d ^0}
= \id _ {\id _ b} 
$$
are satisfied.
\end{lem}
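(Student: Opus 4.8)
The plan is to obtain both $2$-cells directly from the part of the universal property of the opcomma object $b\uparrow_pb$ concerning $2$-cells --- the one governed by the compatibility condition \eqref{definition_for_twocells_for_opcommaobjects} --- applied with target $h':=\ell$ in both cases, and with source $h:=\s^0$ in the first case and $h:=\ella\cdot\D^1$ in the second. All the inputs needed are already recorded: $\ell\cdot\d^0=\id_b$, $\ell\cdot\d^1=t$ and $\id_\ell\ast\upalpha=\gamma$ by Proposition~\ref{rightkanextensionoftheidentityalongd0}; $\s^0\cdot\d^1=\s^0\cdot\d^0=\id_b$ together with $\id_{\s^0}\ast\upalpha=\id_p$ by \eqref{highercokernelidentity}; and $(\ella\D^1)\cdot\d^0=\id_b$, $(\ella\D^1)\cdot\d^1=t^2$ together with $\id_{\ella\D^1}\ast\upalpha=(\id_t\ast\gamma)\cdot\gamma$ by Lemma~\ref{ellaD1}.

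First I would construct $\uptheta$. I feed into the universal property the pair $\xi_0:=\id_{\id_b}\colon\s^0\d^0\Rightarrow\ell\d^0$ and $\xi_1:=\eta\colon\s^0\d^1\Rightarrow\ell\d^1$, both of which are well typed by the identifications above. The compatibility condition \eqref{definition_for_twocells_for_opcommaobjects} that has to be checked, after replacing $\id_\ell\ast\upalpha$ by $\gamma$ and $\xi_0$ by the relevant identity, amounts to the equality $\gamma\cdot(\eta\ast\id_p)=\id_{\s^0}\ast\upalpha$. Its left-hand side is the identity $2$-cell on $p$ by the defining equation \eqref{definitionofcodensityidentity} of the unit $\eta$ of the codensity monad, and its right-hand side is the identity $2$-cell on $p$ by the identity equation \eqref{highercokernelidentity} in the definition of $\s^0$. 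So the universal property supplies a unique $2$-cell $\uptheta\colon\s^0\Rightarrow\ell$ with $\uptheta\ast\id_{\d^1}=\eta$ and $\uptheta\ast\id_{\d^0}=\id_{\id_b}$, which is exactly the claim.

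For $\uplambda$ I would run the identical argument with $\xi_0:=\id_{\id_b}\colon(\ella\D^1)\d^0\Rightarrow\ell\d^0$ and $\xi_1:=m\colon(\ella\D^1)\d^1\Rightarrow\ell\d^1$. Here the condition \eqref{definition_for_twocells_for_opcommaobjects} reduces to the equality $\gamma\cdot(m\ast\id_p)=\id_{\ella\D^1}\ast\upalpha$. By Lemma~\ref{ellaD1} the right-hand side is the canonical composite $(\id_t\ast\gamma)\cdot\gamma\colon t^2p\Rightarrow p$, and by the defining equation \eqref{definitionofcodensitymultiplication} of the multiplication $m$ of the codensity monad the left-hand side is that same composite. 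So the universal property yields a unique $\uplambda\colon\ella\cdot\D^1\Rightarrow\ell$ with $\uplambda\ast\id_{\d^1}=m$ and $\uplambda\ast\id_{\d^0}=\id_{\id_b}$, as required. It is worth noting that the condition of Proposition~\ref{conditionofpreservation} enters only to make $\ella$, hence $\ella\cdot\D^1$, available (through Proposition~\ref{definitionella} and Lemma~\ref{ellaD1}); the construction of $\uptheta$ does not use it.

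The whole argument is essentially bookkeeping. The one place that needs genuine care --- and which I expect to be the main, if minor, obstacle --- is unwinding the pasting in \eqref{definition_for_twocells_for_opcommaobjects} for each of the two pairs $(\xi_0,\xi_1)$ and recognising the resulting identities as, respectively, the defining equation \eqref{definitionofcodensityidentity} of $\eta$ and the defining equation \eqref{definitionofcodensitymultiplication} of $m$, the second one combined with the computation already packaged in Lemma~\ref{ellaD1}.
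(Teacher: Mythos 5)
Your proof is correct and is essentially the paper's own argument: both obtain $\uptheta$ and $\uplambda$ from the $2$-cell part of the universal property of $b\uparrow_p b$ applied to the pairs $(\xi_0,\xi_1)=(\id_{\id_b},\eta)$ and $(\id_{\id_b},m)$, and both verify the compatibility condition \eqref{definition_for_twocells_for_opcommaobjects} by exactly the computations $\left(\id_\ell\ast\upalpha\right)\cdot\left(\eta\ast\id_p\right)=\gamma\cdot\left(\eta\ast\id_p\right)=\id_p=\id_{\s^0}\ast\upalpha$ and $\gamma\cdot\left(m\ast\id_p\right)=\gamma\cdot\left(\id_t\ast\gamma\right)=\id_{\ella\cdot\D^1}\ast\upalpha$, using the definitions of $\ell$, $\eta$, $\s^0$, $m$ and Lemma \ref{ellaD1}. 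Your added remark that the hypothesis of Proposition \ref{conditionofpreservation} enters only to make $\ella\cdot\D^1$ available is accurate.
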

\begin{proof}
In fact, by the universal property of the opcomma object $b\uparrow _p b $ of $p $ along itself:
\begin{itemize}
\renewcommand\labelitemi{--}
\item there is a unique $2$-cell $\uptheta : \s ^0 \Rightarrow \ell $ such that 
$\uptheta \ast \id _ {\d ^1} = \eta $ and $\uptheta \ast \id _ {\d ^0} = \id _ {\id _ b} $, since 
$$\left( \id _ \ell \ast \upalpha \right)\cdot \left( \eta \ast \id _ p \right) = \gamma \cdot \left( \eta \ast \id _ p \right)  = \id _ p = \id _ {\s ^0 }\ast \upalpha
$$
by the definitions of $\ell $, $\eta $ and $\s ^0$;
\item there is a unique $2$-cell $\uplambda :  \ella\cdot \D  ^1\Rightarrow \ell $ such that 
$\uplambda\ast \id _ {\d ^1} = m $ and $ \uplambda\ast \id _ {\d ^0}
= \id _ {\id _ b} $, since it follows from the definitions of $\ell $ and  $m : t ^2\Rightarrow t $ that
$$ \left( \id _ \ell \ast \upalpha \right)\cdot  (m\ast \id _ p ) = \gamma \cdot (m\ast \id _ p ) = \gamma \cdot (\id _ t \ast \gamma ) = \id _ {\ella\cdot \D ^1}\ast \upalpha   $$
by Lemma \ref{ellaD1}.
\end{itemize}
\end{proof}

\begin{theo}\label{principal}
Assume that $p$ satisfies the condition of Proposition \ref{conditionofpreservation}. We get that
the factorization defined in \eqref{imaginglaxdescentfactorizationofthehighercokernel}
is $2$-naturally (in $x$) isomorphic  to the factorization defined in \eqref{imagingsemanticfactorizationofamorphismthathascodensitymonad}.
\end{theo}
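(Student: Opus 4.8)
The plan is to prove the statement pointwise: for each object $x$ of $\AAA$ I will exhibit an isomorphism of categories between the domains of the two factorizations, $\lim(\mathfrak{D},\AAA(x,\mathcal{H}_p-))$ and $\AAA(x,b)^{\AAA(x,\t)}$, commuting with both legs and $2$-natural in $x$. First I record that, since $\mathcal{H}_p=\mathcal{H}_p'\circ\mathtt{e}_{\Delta_{\mathrm{Str}}}$ with $\mathcal{H}_p'$ a $2$-functor on the \emph{locally discrete} $2$-category $\Delta_{\mathrm{3}}$ (functoriality being guaranteed by $\D^1\d^1=\D^2\d^1$, $\D^1\d^0=\D^0\d^0$ and the pushout relation $\D^2\d^0=\D^0\d^1$), the $2$-functor $\AAA(x,\mathcal{H}_p-)$ sends each structure $2$-cell $\sigma_{ij}$, $\mathfrak{n}_k$ of $\Delta_{\mathrm{Str}}$ to an identity. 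Hence an object of $\lim(\mathfrak{D},\AAA(x,\mathcal{H}_p-))$ is just a pair $(w,\psi)$ with $w\colon x\to b$ and $\psi\colon\d^1w\Rightarrow\d^0w$ satisfying $\id_{\D^1}\ast\psi=(\id_{\D^0}\ast\psi)\cdot(\id_{\D^2}\ast\psi)$ and $\id_{\s^0}\ast\psi=\id_w$, a morphism $(w,\psi)\to(w',\psi')$ being a $2$-cell $\mathfrak{m}\colon w\Rightarrow w'$ with $(\id_{\d^0}\ast\mathfrak{m})\cdot\psi=\psi'\cdot(\id_{\d^1}\ast\mathfrak{m})$.

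Next I would define the comparison functor $F_x$ by $F_x(w,\psi):=(w,\id_\ell\ast\psi)$ and $F_x(\mathfrak{m}):=\mathfrak{m}$; here $\id_\ell\ast\psi$ is a $2$-cell $tw\Rightarrow w$ because $\ell\d^1=t$ and $\ell\d^0=\id_b$ (Proposition~\ref{rightkanextensionoftheidentityalongd0}). Checking that $(w,\id_\ell\ast\psi)$ is an $\AAA(x,\t)$-algebra is a pasting computation: moving the whiskerings of $\uptheta\colon\s^0\Rightarrow\ell$ and $\uplambda\colon\ella\D^1\Rightarrow\ell$ (Lemma~\ref{important2cells}) past $\psi$ via the interchange law, and using $\uptheta\ast\id_{\d^0}=\id_{\id_b}$, $\uptheta\ast\id_{\d^1}=\eta$, $\uplambda\ast\id_{\d^0}=\id_{\id_b}$, $\uplambda\ast\id_{\d^1}=m$ together with $\ella\D^0=\ell$, $\ella\D^2=t\ell$ (Proposition~\ref{definitionella}, Lemma~\ref{ellaD1}), the algebra unit law collapses to the descent identity $\id_{\s^0}\ast\psi=\id_w$ and the algebra associativity law collapses to the descent associativity. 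That $\mathfrak{m}$ is then an algebra morphism follows by whiskering its defining equation by $\ell$. Compatibility with the legs is immediate: $\uu^{\AAA(x,\t)}\circ F_x=\dd^{(\mathfrak{D},\AAA(x,\mathcal{H}_p-))}$ since both forget $\psi$, and $F_x\circ\AAA(x,p)^{(\AAA(x,\mathcal{H}_p-),\AAA(x,\upalpha))}=\AAA(x,p)^{(\AAA(x,\t),\AAA(x,\gamma))}$ because $\id_\ell\ast\upalpha=\gamma$ (Proposition~\ref{rightkanextensionoftheidentityalongd0}).

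Then I would construct the inverse $G_x$ using the adjunction $\ell\dashv\d^0$ of Proposition~\ref{conditionofpreservation}, with trivial counit $\id_{\id_b}$ and unit $\ueta$: for an algebra $(w,\beta)$ I set $G_x(w,\beta):=(w,\widehat\beta)$ with $\widehat\beta:=(\id_{\d^0}\ast\beta)\cdot(\ueta\ast\id_{\d^1w})$ the transpose of $\beta$, and $G_x(\mathfrak{m}):=\mathfrak{m}$. The triangle identities ($\id_\ell\ast\ueta=\id_\ell$ and $\ueta\ast\id_{\d^0}=\id_{\d^0}$) give at once $\id_\ell\ast\widehat\beta=\beta$ and $\widehat{\id_\ell\ast\psi}=\psi$, so once $G_x$ is a well-defined functor it is two-sided inverse to $F_x$, and since both functors are the identity on underlying $2$-cells, $F_x$ is an isomorphism of categories. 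Functoriality of $G_x$ on morphisms is again the whiskering-by-$\ell$ computation read in reverse. To see that $(w,\widehat\beta)$ is a descent datum I would first note the auxiliary identity $\id_{\s^0}\ast\ueta=\uptheta$ (both sides are $2$-cells $\s^0\Rightarrow\ell$ whose whiskering by $\d^0$ is $\id_{\id_b}$, so they agree by the universal property of $\ell=\ran_{\d^0}\id_b$); then the descent identity for $(w,\widehat\beta)$ reduces to the algebra unit law. For the descent associativity of $(w,\widehat\beta)$ — an equation between $2$-cells $\D^2\d^1w\Rightarrow\D^0\d^0w$ — I would use that $\D^0\d^0$ is right adjoint to $\ella$ with trivial counit (Proposition~\ref{definitionella}), so that whiskering by $\ella$ is injective on $2$-cells with codomain $\D^0\d^0w$; after whiskering by $\ella$, and invoking $\ella\D^0=\ell$, $\ella\D^2=t\ell$, the description of $\ella\D^1$ in Lemma~\ref{ellaD1}, and the $2$-cell $\uplambda$, the equation becomes precisely the algebra associativity law $\beta\cdot(m\ast\id_w)=\beta\cdot(\id_t\ast\beta)$, which holds. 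Finally, $2$-naturality of $(F_x)_x$ in $x$ is clear, since $F_x$ only relabels $\psi$ as $\id_\ell\ast\psi$ and hence commutes with whiskering by any $1$-cell $f\colon x'\to x$.

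I expect the verification that $(w,\widehat\beta)$ satisfies the \emph{descent associativity} axiom to be the main obstacle: it is exactly this step that forces the use of the adjunction $\ella\dashv\D^0\d^0$ (Proposition~\ref{definitionella}) and of the auxiliary $2$-cell $\uplambda$ (Lemma~\ref{important2cells}), and that requires matching $\ella\D^1$, $\ella\D^0$, $\ella\D^2$ with $t^2$, $\id_b$, $t$ and the codensity multiplication $m$ through Lemma~\ref{ellaD1}. Everything else is routine bookkeeping with the interchange law and the triangle identities.
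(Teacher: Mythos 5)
Your proposal is correct and follows essentially the same route as the paper's proof: the comparison $(h,\beta)\mapsto(h,\id_\ell\ast\beta)$, the bijection on $2$-cells coming from the adjunction $\ell\dashv\d^0$, the reduction of the descent identity to the algebra unit law via $\uptheta$, and the reduction of descent associativity to algebra associativity by whiskering with $\ella$ using the adjunction $\ella\dashv\D^0\d^0$ and the $2$-cell $\uplambda$. The only difference is presentational — you build an explicit inverse $G_x$ via the unit $\ueta$ and re-verify the descent axioms for the transpose, whereas the paper packages both directions at once as an ``if and only if'' through the mate correspondence, so no separate verification is needed.
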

\begin{proof}
Recall that \eqref{imaginglaxdescentfactorizationofthehighercokernel} is the factorization of $\AAA (x, p ) $  
 induced by the pair $$\left( \AAA (x, p), \AAA (x, \upalpha )  \right) $$ and the universal property of the 
lax descent category of
$\AAA (x,\mathcal{H}_p -) : \Delta _ \mathrm{Str}\to \Cat $;
and \eqref{imagingsemanticfactorizationofamorphismthathascodensitymonad} is the factorization of $\AAA (x, p ) $ induced by the pair $(\AAA (x, p ), \AAA (x, \gamma )) $ and the universal property of the 
Eilenberg-Moore category $\AAA (x, b) ^{\AAA (x, \t ) }$ of the monad $\AAA (x, \t ) $.

Observe that, 
since $(\ell\dashv \d ^0 , \id _ {\id _ b}, \ueta ): b\uparrow _ p b\to b  $ is an adjunction by Proposition \ref{conditionofpreservation},
 for each morphism $h : x\to b $ of $\AAA $ (\textit{i.e.} for each object of $\AAA (x, b ) $), there is a bijection 
$$\AAA (x, b\uparrow _ p b )(\d ^1\cdot h, \d^0\cdot h )\cong \AAA (x, b)(\ell\cdot\d ^1\cdot h, \ell\cdot\d^0\cdot h ) =
 \AAA (x, b)(t\cdot h,  h ) $$ defined by $\beta\mapsto \id _ {\ell }\ast \beta $,
that is to say, the mate correspondence under the identity adjunction $\id _ x\dashv \id _ x $  and the adjunction $(\ell\dashv \d ^0 , \id _ {\id _ b}, \ueta )$, see Remark \ref{Mate Correspondence Theorem PhD}.

Given an object  $h $ of $\AAA (x, b) $, we prove below that a $2$-cell $\beta : \d ^1\cdot h\Rightarrow \d^0\cdot h  $ 
satisfies the descent associativity and identity (\eqref{Associativityequationdescent} and  \eqref{Identityequationdescent}) w.r.t. $\H_p $ if and only if its corresponding
$2$-cell $\id _ {\ell }\ast \beta $ satisfies 
the algebra associativity and identity equations w.r.t. $\t $ (\eqref{Algebraassociativityextensions} and
\eqref{i_Algebraassociativityextensions}).

\begin{enumerate}
\item Observe that, given a $2$-cell $\beta : \d ^1\cdot h\Rightarrow \d^0\cdot h  $,
by the definition of $\uptheta $ in Lemma \ref{important2cells}, we get that
\begin{eqnarray*} 
\id _ {\s ^0}\ast \beta &=& \id _ {\s^0\cdot \d ^0 \cdot h}\cdot \left( \id _ {\s ^0}\ast \beta\right)\\
&=& \left( \id _ { \id _ b   }\ast \id _ h \right) \cdot \left( \id _ {\s ^0}\ast \beta\right)\\
&=& \left( \left(\uptheta\ast \id _ {\d ^0} \right)\ast \id _ h \right)\cdot \left( \id _ {\s ^0}\ast \beta\right)\\
&=& \left( \uptheta\ast \id _ {\d^0 \cdot h}\right)\cdot \left( \id _ {\s ^0}\ast \beta\right)\\
&=& \uptheta \ast \beta
\end{eqnarray*}  
which, by the interchange law,  is equal to the left side of the equation
$$\left(\id _ \ell  \ast \beta \right)\cdot \left( \uptheta \ast \id _ {\d ^1}\ast \id _ h \right ) = \left(\id _ \ell  \ast \beta \right)\cdot 
\left( \eta\ast \id _ h \right ) $$
which holds by Lemma \ref{important2cells}. Thus, of course, $\left(\id _ \ell  \ast \beta \right)\cdot 
\left( \eta\ast \id _ h \right ) $ is the identity on $h$ if and only if $\id _ {\s ^0}\ast \beta = \left(\id _ \ell  \ast \beta \right)\cdot 
\left( \eta\ast \id _ h \right ) $ is the identity on $h$ as well. This proves that  $\left(h, \beta\right) $ satisfies the descent identity \eqref{Identityequationdescent} w.r.t. $\H_p $ if and only if $\left( h,  \id _ \ell\ast \beta\right) $
satisfies the algebra identity equation w.r.t. $\t $ \eqref{i_Algebraassociativityextensions}.
\item Recall the adjunction $\left(\ella\dashv \D ^0 \d ^0 ,\, \id_{\id _b},\, \etta\right) $ of Proposition 
\ref{definitionella}.  Given a $2$-cell $\beta : \d ^1\cdot h\Rightarrow \d^0\cdot h  $, consider the $2$-cells defined by the pastings below.
%
\pu
%
\pu
\begin{eqnarray}
\upbeta_1 &\qquad \colon = \qquad & \diag{betaumofthemainproof}\\
\upbeta _c &\qquad\colon = \qquad & \diag{betacofthemainproof}
\end{eqnarray}
We have that $\upbeta _c = \upbeta _ 1 $ if, and only if, 
 $\left(h, \beta\right) $  satisfies the descent associativity \eqref{Associativityequationdescent} w.r.t. $\H _p $. Therefore,  by the mate correspondence
under the identity adjunction $\id_ x \dashv \id _ x $ and the adjunction 
$(\ella\dashv \D ^0 \d ^0 , \id_{\id _b}, \etta) $,
we conclude that $\left(h, \beta\right) $  satisfies the \textit{descent associativity} \eqref{Associativityequationdescent} w.r.t. $\H _p $ if, and only if,
\begin{equation}\label{mateequationofdescentassociativityfortheproof}
\id _ {\ella }\ast \upbeta _ c = \id _ {\ella }\ast \upbeta _ 1.
\end{equation} 
Now, we observe that:
\begin{enumerate}
\item Since
$\ella \cdot \D ^2 = t\cdot \ell $  and $  \ella \cdot \D ^0  =  \ell $, 
we have that
%
\pu
\begin{equation}\label{eq:assoc1} 
\id _ {\ella }\ast \upbeta _ c\quad = \quad \diag{algebra_rightsideoftheequationassociativityofmonadforthemainproof}
\end{equation}
\item By Lemma \ref{important2cells},
\begin{eqnarray*}
\id _{\ella}\ast \upbeta _ 1 &=&
\left( \id_ {\ella\D ^1}\ast \beta \right)\\
 &=& \left( \id _ {\id _ b}\ast \id _ h \right) \cdot\left( \id_ {\ella\D ^1}\ast \beta \right)\\
& = &  \left( \uplambda\ast \id _ {\d ^0 }\ast \id _ h \right) \cdot \left( \id_ {\ella\D ^1}\ast \beta \right) 
\end{eqnarray*}
which, by the interchange law and Lemma \ref{important2cells}, is equal to 
%
\pu
\begin{equation}\label{eq:assoc2} 
\uplambda \ast \beta = (\id _ \ell \ast \beta )\cdot ( \uplambda\ast \id _ {\d ^1}\ast \id _ {h} ) \quad =\quad \diag{algebra_leftsideoftheequationassociativityofmonadforthemainproof}. 
\end{equation}

\end{enumerate}
Therefore \eqref{mateequationofdescentassociativityfortheproof} holds if, and only if, the pasting \eqref{eq:assoc1} is equal to \eqref{eq:assoc2}.   

This completes the proof that  $\left( h , \beta\right) $ satisfies descent associativity w.r.t. $\H _p $  if, and only if, $\id _ \ell \ast \beta $ satisfies the algebra associativity \eqref{Algebraassociativityextensions} w.r.t. $\t $.
\end{enumerate}
The above implies that the association $(h, \beta )\mapsto (h, \id _ \ell\ast \beta ) $ 
gives a bijection between the objects of  $ \lim \left( \mathfrak{D} , \AAA (x, \H _ p - )\right) $ and
$\AAA (x, b)^{\AAA (x, \t )} $. 

Given objects $(h _ 1, \beta _1 )$ and $(h_0 , \beta _ 0 ) $ 
of $\lim \left( \mathfrak{D} , \AAA (x, \H _ p - )\right)$, by the mate correspondence under the identity adjunction and $\ell\dashv \d ^0 $, a $2$-cell $$\xi : h_1\Rightarrow h_0 : x\to b $$ 
satisfies the equation
$$\xymatrix@=1.5em{
&
x
\ar@{{}{ }{}}@/^0.6pc/[dd]|-{\xRightarrow{\mbox{ }\beta _ 0\mbox{ } } }
\ar[rd]|-{h_0}
\ar@/^1.2pc/[ld]|-{h_0}
\ar@/_1.2pc/[ld]|-{h_1}
\ar@{}[ld]|-{\xRightarrow{\mbox{ } \xi  \mbox{ } } }
&
&
&
x
\ar@{{}{ }{}}@/_0.6pc/[dd]|-{\xRightarrow{\mbox{ }\beta _ 1\mbox{ } } }
\ar[ld]|-{h_1 }
\ar@/^1.2pc/[rd]|-{h_0}
\ar@/_1.2pc/[rd]|-{h_1}
\ar@{}[rd]|-{\xRightarrow{\mbox{ } \xi  \mbox{ } }} 
&
\\
b
\ar[rd]|-{\d ^1  }
&
&
b
\ar[ld]|-{\d ^0  }
&
b
\ar@{}[l]|-{=}
\ar[rd]|-{\d^1 }
&
&
b
\ar[ld]|-{\d^0 }
\\
&
b\uparrow _ p b
&
&
&
b\uparrow _p b 
&
}$$ 
if and only if the mate of the left side is equal to the mate of the right side, which means
$$\xymatrix@=1.5em{
&
x
\ar@{{}{ }{}}@/^0.6pc/[dd]|-{\xRightarrow{\mbox{ }\beta _ 0\mbox{ } } }
\ar[rd]|-{h_0}
\ar@/^1.2pc/[ld]|-{h_0}
\ar@/_1.2pc/[ld]|-{h_1}
\ar@{}[ld]|-{\xRightarrow{\mbox{ } \xi  \mbox{ } } }
&
&
&
x
\ar@{{}{ }{}}@/_0.6pc/[dd]|-{\xRightarrow{\mbox{ }\beta _ 1\mbox{ } } }
\ar[ld]|-{h_1 }
\ar@/^1.2pc/[rd]|-{h_0}
\ar@/_1.2pc/[rd]|-{h_1}
\ar@{}[rd]|-{\xRightarrow{\mbox{ } \xi  \mbox{ } }} 
&
\\
b
\ar[rd]|-{\d ^1  }
&
&
b
\ar[ld]|-{\d ^0  }
&
b
\ar@{}[l]|-{=}
\ar[rd]|-{\d^1 }
&
&
b
\ar[ld]|-{\d^0 }
\\
&
b\uparrow _ p b\ar[d]^-{\ell }
&
&
&
b\uparrow _p b \ar[d]^-{\ell }
&
\\
&
b
&
&
&
b
&
}$$ 
which is precisely the condition of being a morphism of algebras in $\AAA (x, b)^{\AAA (x, \t )} $.
In other words, this proves that $\xi $ gives a morphism between $(h _ 1, \beta _1 )$ and $(h_0 , \beta _ 0 ) $ 
in $\lim \left( \mathfrak{D} , \AAA (x, \H _ p - )\right) $ if and only if it gives a morphism between $(h _ 1, \id _ \ell\ast \beta _1 )$ and $(h_0 , \id _ \ell\ast \beta _0 ) $ in $\AAA (x, b)^{\AAA (x, \t )} $.

Finally, given the facts above, we can conclude that we actually can define
\begin{eqnarray*}
\lim \left( \mathfrak{D} , \AAA (x, \H _ p - )\right) &\to & \AAA (x, b)^{\AAA (x, \t )}\\
(h, \beta ) &\mapsto & (h, \id _\ell\ast \beta ) \\
\xi   &\mapsto & \xi 
\end{eqnarray*}
which is clearly functorial and, hence, it defines an invertible functor (since it is bijective on objects and fully faithful as proved above). 

This invertible
functor is $2$-natural in $x$, giving a $2$-natural isomorphism between  \eqref{imaginglaxdescentfactorizationofthehighercokernel} and 
\eqref{imagingsemanticfactorizationofamorphismthathascodensitymonad}.
\end{proof}

\begin{theo}[Main Theorem]\label{maintheoremofthepaper}
Assume that  $\ran _p p $ exists and is preserved
by the morphism $\delta  ^0 : b\to b\uparrow_p b  $. 	 We have that the semantic factorization \eqref{semanticfactorizationofamorphismthathascodensitymonad} of $p$ is 
isomorphic to the 
semantic lax descent factorization \eqref{laxdescentfactorizationofthehighercokernel} of $p$, either one existing if the other does.
\end{theo}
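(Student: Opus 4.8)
The plan is to deduce the statement from Theorem~\ref{principal} together with the Yoneda embedding of $2$-categories, handling the ``either one existing if the other does'' clause by a representability argument. First I would note that the hypothesis of the present theorem --- that $\ran_p p$ exists and is preserved by $\d^0:b\to b\uparrow_p b$ --- is exactly the condition of Proposition~\ref{conditionofpreservation} that is required in Theorem~\ref{principal}; since $\AAA$ is assumed to have the two-dimensional cokernel diagram $\H_p$ of $p$, Theorem~\ref{principal} applies and yields a $2$-natural isomorphism (in $x$)
\[
\lim\!\left(\mathfrak{D},\AAA(x,\H_p-)\right)\;\cong\;\AAA(x,b)^{\AAA(x,\t)}.
\]
Inspecting the comparison functor constructed in the proof of Theorem~\ref{principal}, namely $(h,\beta)\mapsto(h,\id_\ell\ast\beta)$ and $\xi\mapsto\xi$, one sees that it is the identity on underlying $1$-cells $h:x\to b$; hence the isomorphism above is an isomorphism of factorizations over $\AAA(x,p)$. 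Indeed, post-composing it with $\dd^{\left(\mathfrak{D},\AAA(x,\H_p-)\right)}$ gives $\uu^{\AAA(x,\t)}$, and pre-composing it with $\AAA(x,p)^{(\AAA(x,\H_p-),\AAA(x,\upalpha))}$ gives $\AAA(x,p)^{(\AAA(x,\t),\AAA(x,\gamma))}$, using only the identity $\id_\ell\ast(\upalpha\ast\id_g)=\gamma\ast\id_g$. This is exactly the assertion of Theorem~\ref{principal}: the factorization of Eq.~\eqref{imaginglaxdescentfactorizationofthehighercokernel} is $2$-naturally isomorphic in $x$ to that of Eq.~\eqref{imagingsemanticfactorizationofamorphismthathascodensitymonad}.

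Next I would settle the existence clause. Both $\Cat$-valued $2$-functors above are defined unconditionally. The $2$-functor $x\mapsto\AAA(x,b)^{\AAA(x,\t)}$ is $2$-naturally representable if and only if the Eilenberg-Moore object $b^\t$ exists, in which case it is represented by $b^\t$, with $\uu^{\AAA(x,\t)}$ corresponding to $\AAA(-,\uu^\t)$ --- this is the universal property recalled in Section~\ref{semanticandfurther}. Dually, $x\mapsto\lim(\mathfrak{D},\AAA(x,\H_p-))$ is $2$-naturally representable if and only if the lax descent object $\lim(\mathfrak{D},\H_p)$ exists in $\AAA$, with $\dd^{(\mathfrak{D},\AAA(x,\H_p-))}$ corresponding to $\AAA(-,\dd^p)$, since the Yoneda embedding creates any existing lax descent object. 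Because the two $2$-functors are isomorphic, one is representable exactly when the other is; therefore $\mathrm{lax}\textrm{-}\mathcal{D}\mathrm{esc}(\H_p)=\lim(\mathfrak{D},\H_p)$ exists if and only if $b^\t$ exists.

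Assuming both objects exist, I would conclude by the Yoneda lemma for $2$-categories. Composing the two representability isomorphisms of the previous paragraph with the isomorphism of Theorem~\ref{principal} gives a $2$-natural isomorphism $\AAA(-,\lim(\mathfrak{D},\H_p))\cong\AAA(-,b^\t)$, which, the Yoneda embedding $\AAA\to[\AAA^\op,\Cat]$ being locally an isomorphism, is induced by a unique isomorphism $\iota:\lim(\mathfrak{D},\H_p)\xrightarrow{\sim}b^\t$ in $\AAA$. Tracing the two legs of the factorizations through these identifications --- using the compatibility over $\AAA(x,p)$ recorded in the first paragraph, and that $\AAA(x,p^\H)$, $\AAA(x,\dd^p)$ correspond respectively to $\AAA(x,p)^{(\AAA(x,\H_p-),\AAA(x,\upalpha))}$, $\dd^{(\mathfrak{D},\AAA(x,\H_p-))}$, and likewise $\AAA(x,p^\t)$, $\AAA(x,\uu^\t)$ to $\AAA(x,p)^{(\AAA(x,\t),\AAA(x,\gamma))}$, $\uu^{\AAA(x,\t)}$ --- one obtains $\iota\cdot p^{\H}=p^\t$ and $\uu^\t\cdot\iota=\dd^{p}$. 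This says precisely that the semantic factorization of $p$ (Eq.~\eqref{semanticfactorizationofamorphismthathascodensitymonad}) is isomorphic to the semantic lax descent factorization of $p$ (Eq.~\eqref{laxdescentfactorizationofthehighercokernel}).

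I do not expect a serious obstacle here, since the real work was already done in Theorem~\ref{principal}; the delicate part is purely bookkeeping in the last step, namely verifying that the isomorphism $\iota$ supplied by Yoneda identifies the whole factorization triangles and not merely their apexes. That verification unwinds, via the explicit descriptions of $\dd^{(\mathfrak{D},\AAA(x,\H_p-))}$, $\uu^{\AAA(x,\t)}$, $p^{\H}$ and $p^\t$ in Section~\ref{laxdescentobjectsdetails} and Section~\ref{semanticandfurther}, to the single observation that the comparison functor of Theorem~\ref{principal} is the identity on underlying $1$-cells; once that is noted, compatibility with both legs is automatic.
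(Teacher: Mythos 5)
Your proposal is correct and follows exactly the route the paper intends: the paper's own proof is the one-line observation that the theorem is a direct consequence of Theorem~\ref{principal}, with the representability/Yoneda bookkeeping (that the Yoneda embedding creates existing Eilenberg--Moore objects and lax descent objects, so the $2$-natural isomorphism of the $\Cat$-valued factorizations transfers to an isomorphism of the factorizations in $\AAA$, either side existing if the other does) left implicit. You have simply written out that bookkeeping explicitly, and your verification that the comparison functor is the identity on underlying $1$-cells is the right way to see that the whole triangles, not just the apexes, are identified.
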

\begin{proof}
It is clearly a direct consequence of Theorem \ref{principal}.
\end{proof}

Recall that, since the result above works for any $2$-category, we have the dual results. For instance, we have Theorem \ref{codualmaintheoremofthepaper} and Theorem \ref{dualmaintheoremofthepaper}.

\begin{theo}[Codual]\label{codualmaintheoremofthepaper}
Let $l: b\to e $ be a morphism of $\AAA $ satisfying the following conditions:
\begin{enumerate}
\item  $\AAA $ has the
two-dimensional cokernel diagram  of $l$;
\item the left Kan extension $\lan _ l l $ of $l$ along itself exists (that is to say, $l$ has the density comonad);
\item the left Kan extension $\lan _ l l $ is preserved by $\delta _ {l\uparrow l} ^1 : e\to l\uparrow l  $.
\end{enumerate}
The diagram of the co-semantic factorization of $l$ is isomorphic to the  semantic lax 
descent factorization of $l$ 
either one existing if the other does.
\end{theo}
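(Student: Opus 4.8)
The plan is to deduce the statement from the Main Theorem (Theorem~\ref{maintheoremofthepaper}) by applying it in the $2$-category $\AAA ^\co $, in which the directions of the $2$-cells --- but not of the morphisms --- are reversed, so that $\AAA ^\co (x,y) = \AAA (x,y)^\op $ for all objects $x,y $. The first step is to record the dictionary relating the ingredients of the Main Theorem, formed in $\AAA ^\co $, with those of the statement above, formed in $\AAA $. A right Kan extension in $\AAA ^\co $ is precisely a left Kan extension in $\AAA $, so ``$l$ has the codensity monad in $\AAA ^\co $'' is exactly condition~(2), that $l$ has the density comonad $\lan _ l l $ in $\AAA $; the Eilenberg--Moore object of that monad in $\AAA ^\co $ is, by definition, the co-Eilenberg--Moore object of the comonad $\lan _ l l $ in $\AAA $; hence the semantic factorization of $l$ formed in $\AAA ^\co $ is precisely the co-semantic factorization of $l$ in $\AAA $.

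Next I would treat the cokernel side. Opcomma objects and pushouts are stable under the $\co$-duality --- their defining universal properties involve only equations of $2$-cells, which are insensitive to reversing $2$-cell directions --- so condition~(1) ensures that $\AAA ^\co $ has the two-dimensional cokernel diagram of $l$; moreover, reading off Definition~\ref{definicaodehighercokernel} together with \eqref{definitionofD1}, \eqref{associativityequationhighercokernel} and \eqref{highercokernelidentity} and the pushout square~\eqref{pushoutsquaredefiningbububp}, this diagram is the two-dimensional cokernel diagram of $l$ in $\AAA $ with the two universal morphisms $\delta _ {l\uparrow l} ^0 $ and $\delta _ {l\uparrow l} ^1 $ of the opcomma object interchanged (and, correspondingly, $\D ^0 $ and $\D ^2 $ interchanged, while $\D ^1 $ and $\s ^0 $ are unchanged). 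Consequently the preservation hypothesis of the Main Theorem in $\AAA ^\co $ --- that $\ran _ l l $ be preserved by the universal morphism $\delta ^0 $ of the opcomma object --- becomes, after this interchange and the identification of preservation of right Kan extensions in $\AAA ^\co $ with preservation of left Kan extensions in $\AAA $ (the codual of Definition~\ref{PreservationKanExtension}), exactly condition~(3): that $\lan _ l l $ be preserved by $\delta _ {l\uparrow l} ^1 $.

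Finally, by Remark~\ref{smaenotionoffactorization} the codual of the construction of the two-dimensional cokernel diagram together with its lax descent factorization yields the same notion of factorization; the underlying mechanism is that every generating $2$-cell of $\Delta _ \mathrm{Str} $ (Definition~\ref{deltaestritodescidasecao1}) is invertible, which furnishes an isomorphism $\Delta _ \mathrm{Str} \cong \Delta _ \mathrm{Str} ^\co $ --- identity on objects and $1$-cells, inverting $2$-cells, and swapping $\dd ^0 \leftrightarrow \dd ^1 $, $\DD ^0 \leftrightarrow \DD ^2 $, $\sigma _ {01} \leftrightarrow \sigma _ {12} $ and $\mathfrak{n} _ 0 \leftrightarrow \mathfrak{n} _ 1 $ --- compatible with the weight $\mathfrak{D} $, so that the semantic lax descent factorization of $l$ formed in $\AAA ^\co $ is, up to isomorphism, the semantic lax descent factorization of $l$ in $\AAA $. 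With the whole dictionary in place, Theorem~\ref{maintheoremofthepaper} applied to the morphism $l$ of $\AAA ^\co $ gives an isomorphism between the semantic factorization of $l$ and the semantic lax descent factorization of $l$ in $\AAA ^\co $, either one existing if the other does; transporting it along the dictionary yields the asserted isomorphism in $\AAA $.

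I expect the only genuine work to be bookkeeping: tracking the universal morphisms and $2$-cells of the opcomma object, the pushout and the weight $\mathfrak{D} $ through the $\co$-duality so as to match correctly the ``$\delta ^0 $'' of the Main Theorem's preservation hypothesis with the ``$\delta _ {l\uparrow l} ^1 $'' of condition~(3); no new $2$-categorical input is required beyond what already goes into Theorems~\ref{principal} and~\ref{maintheoremofthepaper}.
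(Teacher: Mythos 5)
Your proposal is correct and follows essentially the same route as the paper: the paper's proof is exactly the observation that the semantic lax descent factorization is self-codual (Remark \ref{smaenotionoffactorization}) together with an application of Theorem \ref{maintheoremofthepaper} in $\AAA^\co$. You simply make explicit the bookkeeping (the swap $\delta^0\leftrightarrow\delta^1$, $\D^0\leftrightarrow\D^2$, and the translation of codensity/Eilenberg--Moore data into density/co-Eilenberg--Moore data) that the paper leaves implicit.
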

\begin{proof}
By the observations on the self-coduality of the factorization in Remark \ref{smaenotionoffactorization}, we
get the result from Theorem \ref{maintheoremofthepaper}.
\end{proof}
\begin{theo}[Dual]\label{dualmaintheoremofthepaper}
Let $l: b\to e $ be a morphism of $\AAA $ satisfying the following conditions:
\begin{enumerate}
\item  $\AAA $ has the
higher kernel of $l$;
\item the right lifting of $l$ through itself exists (that is to say, $l$ has the op-codensity monad);
\item the right lifting of $l$ through itself is respected by the arrow $\delta ^ {l\downarrow l} _0 : l\downarrow l\to b  $.
\end{enumerate}
The diagram of the op-semantic factorization of $l$  
is $2$-naturally isomorphic to the semantic lax codescent factorization of $l$ (see Remark \ref{smaenotionoffactorization} and \eqref{laxdcoescentfactorizationofthehigherkernel}).
\end{theo}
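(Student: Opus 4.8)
The plan is to deduce the statement from Theorem \ref{maintheoremofthepaper} --- or, for the $2$-natural refinement, from Theorem \ref{principal} --- applied inside the opposite $2$-category $\AAA^{\op}$, with $l$ regarded as a morphism $e\to b$ of $\AAA^{\op}$. The first step is to record the dictionary relating the constructions appearing here to those of Theorem \ref{maintheoremofthepaper}. Opcomma objects in $\AAA^{\op}$ are comma objects in $\AAA$, so the two-dimensional cokernel diagram of $l$ computed in $\AAA^{\op}$ is exactly the higher kernel $\H^{l}:\Delta_{\mathrm{Str}}^{\op}\to\AAA$ of $l$, and the universal morphism $\delta^{0}$ of the opcomma object of $l$ along itself in $\AAA^{\op}$ is the universal arrow $\delta^{l\downarrow l}_{0}:l\downarrow l\to b$. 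Moreover, the right Kan extension $\ran_{l}l$ in $\AAA^{\op}$ is the right lifting of $l$ through $l$ in $\AAA$ (hence the codensity monad of $l$ in $\AAA^{\op}$ is the op-codensity monad of $l$ in $\AAA$), and a morphism preserves a right Kan extension in $\AAA^{\op}$ precisely when it respects the corresponding right lifting in $\AAA$. Under this dictionary, the three listed conditions are literally the hypotheses of Theorem \ref{maintheoremofthepaper} for $l$ viewed inside $\AAA^{\op}$.

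The second step is to identify the two factorizations in $\AAA$ with the ones produced by Theorem \ref{principal} in $\AAA^{\op}$. The Kleisli object of a monad in $\AAA$ is, by definition, its Eilenberg--Moore object computed in $\AAA^{\op}$; hence the op-semantic factorization of $l$, namely the factorization through the Kleisli object of the op-codensity monad, is the semantic factorization of $l$ in $\AAA^{\op}$ (Eq.~\eqref{semanticfactorizationofamorphismthathascodensitymonad} read in $\AAA^{\op}$). Likewise, the lax codescent object in $\AAA$ is the lax descent object in $\AAA^{\op}$, so --- as already noted in Remark \ref{smaenotionoffactorization} --- the semantic lax codescent factorization of $l$ (Eq.~\eqref{laxdcoescentfactorizationofthehigherkernel}) is the semantic lax descent factorization of $l$ in $\AAA^{\op}$ (Eq.~\eqref{laxdescentfactorizationofthehighercokernel} read in $\AAA^{\op}$). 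Applying Theorem \ref{principal} inside $\AAA^{\op}$ then gives a $2$-natural isomorphism, in the object $x$, between the images of these two factorizations under the representables $\AAA^{\op}(x,-)=\AAA(-,x)$; since the Yoneda embedding of $\AAA^{\op}$ creates the weighted limits in question, one of the two factorizations in $\AAA$ exists whenever the other does, and they are then $2$-naturally isomorphic, which is the assertion.

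The only delicate point --- and the step I expect to be the main obstacle --- is keeping track of which of the four $2$-categorical duals is in play at each stage: passing to $\AAA^{\op}$ reverses $1$-cells but not $2$-cells, so one must consistently match ``$\ran$ in $\AAA^{\op}$'' with ``right lifting in $\AAA$'' (not with ``left Kan extension''), ``Eilenberg--Moore object in $\AAA^{\op}$'' with ``Kleisli object in $\AAA$'', and ``opcomma in $\AAA^{\op}$'' with ``comma in $\AAA$''. Once this dictionary is fixed there is nothing further to compute beyond what is already contained in Theorem \ref{principal} and Remark \ref{smaenotionoffactorization}.
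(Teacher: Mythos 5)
Your proposal is correct and matches the paper's (implicit) argument: the paper states this result as a formal dual of Theorem \ref{maintheoremofthepaper} with no written proof, the intended justification being exactly the application of Theorem \ref{principal}/Theorem \ref{maintheoremofthepaper} in $\AAA^{\op}$ under the dictionary opcomma/comma, right Kan extension/right lifting, codensity/op-codensity monad, Eilenberg--Moore/Kleisli object, lax descent/lax codescent object. Your careful spelling out of that dictionary, and of the Yoneda step for the existence claim, is precisely what the paper leaves to the reader.
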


As a consequence of Theorem \ref{maintheoremofthepaper} and its duals, by Proposition \ref{okforrightadjoints}, we get:

\begin{theo}[Adjunction]\label{ADJUNCTIONVERSION}
Let $(\mathfrak{l} \dashv \mathfrak{p}, \varepsilon , \eta  ): \mathfrak{b}\to \mathfrak{e} $ be an adjunction in $\AAA $. We have the following:
\begin{enumerate}
\item if $\AAA $ has the two-dimensional cokernel diagram  of $\mathfrak{p}$, then the semantic lax 
descent factorization~\eqref{laxdescentfactorizationofthehighercokernel} of $\mathfrak{p}$
coincides up to isomorphism with the usual factorization of $\mathfrak{p}$ through the Eilenberg-Moore object, either one existing if the other does;
\item  if $\AAA $ has the two-dimensional kernel diagram of $\mathfrak{l} $, then the semantic lax 
codescent factorization of $\mathfrak{l} $  
coincides up to isomorphism with the usual factorization of $\mathfrak{l}$ through the Kleisli object, either one existing if the other does;
\item \label{coalgebrasmain} if $\AAA $ has the two-dimensional cokernel diagram  of $\mathfrak{l} $, then the  
semantic lax 
descent factorization of $\mathfrak{l}$
coincides up to isomorphism with the usual factorization of $\mathfrak{l}$ through the co-Eilenberg-Moore object, either one existing if the other does;
\item if $\AAA $ has the two-dimensional kernel diagram  of $\mathfrak{p} $, then the semantic lax 
codescent factorization  of $\mathfrak{p}$
coincides up to isomorphism with the usual factorization of $\mathfrak{p} $ through the co-Kleisli object, either one existing if the other does.
\end{enumerate}
\end{theo}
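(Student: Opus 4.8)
The plan is to obtain all four items directly from the Main Theorem (Theorem~\ref{maintheoremofthepaper}) and its three duals---the codual (Theorem~\ref{codualmaintheoremofthepaper}), the dual (Theorem~\ref{dualmaintheoremofthepaper}), and the remaining coop-dual---combined with the identifications recorded in the subsection on right adjoint morphisms: for a right adjoint morphism the codensity monad is the monad induced by the adjunction and its semantic factorization is the classical Eilenberg--Moore factorization, and dually for the op-codensity monad, the density comonad, and the op-density comonad. Concretely, item (1) will be Theorem~\ref{maintheoremofthepaper} applied to $\mathfrak{p}$; item (3) will be Theorem~\ref{codualmaintheoremofthepaper} applied to $\mathfrak{l}$; item (2) will be Theorem~\ref{dualmaintheoremofthepaper} applied to $\mathfrak{l}$; and item (4) will be the coop-dual of Theorem~\ref{maintheoremofthepaper} applied to $\mathfrak{p}$.

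First I would check that the Kan-extension, respectively lifting, hypotheses of those four statements hold automatically here. Since $\mathfrak{l}\dashv\mathfrak{p}$, the morphism $\mathfrak{p}$ is a right adjoint and $\mathfrak{l}$ a left adjoint, so Proposition~\ref{okforrightadjoints}---which is itself merely a packaging of the Dubuc--Street Theorem~\ref{DUBUCSTREET}---and its three duals, obtained by reading Proposition~\ref{okforrightadjoints} in $\AAA^{\op}$, $\AAA^{\co}$ and $\AAA^{\coop}$, yield at once that $\ran_{\mathfrak{p}}\mathfrak{p}$ and $\lan_{\mathfrak{l}}\mathfrak{l}$ exist and are absolute, and that the right lifting of $\mathfrak{l}$ through $\mathfrak{l}$ and the left lifting of $\mathfrak{p}$ through $\mathfrak{p}$ exist and are respected by every morphism. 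In particular each of these Kan extensions is preserved by, and each of these liftings respected by, the relevant universal $1$-cell of the pertinent opcomma or comma object---for instance, the arrow $\d^0 : \mathfrak{b}\to \mathfrak{b}\uparrow_{\mathfrak{p}}\mathfrak{b}$ in the case of item (1). Hence, as soon as the two-dimensional cokernel diagram, respectively the two-dimensional kernel diagram, assumed in each item exists, the full hypotheses of Theorem~\ref{maintheoremofthepaper} and of each of its duals are satisfied, so that each of those theorems applies verbatim.

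It then only remains to rename the conclusions. By the subsection on right adjoint morphisms, the codensity monad of $\mathfrak{p}$ is the monad $\mathfrak{t}=(\mathfrak{b},\mathfrak{p}\mathfrak{l},\id_{\mathfrak{p}}\ast\varepsilon\ast\id_{\mathfrak{l}},\eta)$ induced by the adjunction and the semantic factorization~\eqref{semanticfactorizationofamorphismthathascodensitymonad} of $\mathfrak{p}$ is precisely the usual factorization of $\mathfrak{p}$ through the Eilenberg--Moore object $\mathfrak{b}^{\mathfrak{t}}$; dually, the op-codensity monad of $\mathfrak{l}$ is this same $\mathfrak{t}$ and the op-semantic factorization of $\mathfrak{l}$ is the usual Kleisli factorization; codually, the density comonad of $\mathfrak{l}$ is the comonad $(\mathfrak{e},\mathfrak{l}\mathfrak{p},\id_{\mathfrak{l}}\ast\eta\ast\id_{\mathfrak{p}},\varepsilon)$ induced by the adjunction, with co-semantic factorization the usual factorization of $\mathfrak{l}$ through the co-Eilenberg--Moore object; and, by the coop-dual of this identification, the op-density comonad of $\mathfrak{p}$ is again that comonad and its coop-semantic factorization is the usual factorization of $\mathfrak{p}$ through the co-Kleisli object. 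Substituting these four identifications into the four applications of Theorem~\ref{maintheoremofthepaper} and its duals from the previous paragraph yields the four displayed isomorphisms, the clause ``either one existing if the other does'' being inherited unchanged.

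The work here is essentially bookkeeping; the only real point of vigilance is to keep the four dualities consistent---in particular, that ``preservation of a right Kan extension'' dualizes to ``respecting a right lifting'' (the remark following Definition~\ref{PreservationKanExtension}), so that Proposition~\ref{okforrightadjoints} and the Dubuc--Street theorem it rests on transport without change to each of the four dual $2$-categories, and that the semantic, op-semantic, co-semantic and coop-semantic factorizations of an adjoint morphism get matched with the correct one among the Eilenberg--Moore, Kleisli, co-Eilenberg--Moore and co-Kleisli constructions. Once that dictionary is pinned down, no calculation remains.
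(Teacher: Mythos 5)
Your proposal is correct and follows exactly the route the paper takes: the paper states this theorem as an immediate consequence of Theorem \ref{maintheoremofthepaper} and its duals together with Proposition \ref{okforrightadjoints} (via Dubuc--Street), plus the identifications of the (op-/co-)semantic factorizations of an adjoint morphism with the Eilenberg--Moore, Kleisli, co-Eilenberg--Moore and co-Kleisli factorizations made in the subsection on right adjoint morphisms. Your matching of the four items to the four dualities, and your verification that the preservation/respecting hypotheses hold automatically by absoluteness, is precisely the intended argument.
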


\subsection{(Counter)examples of morphisms satisfying Proposition \ref{conditionofpreservation}}
\label{toyexamples}
	Even $\Cat $ has morphisms that do not satisfy the condition of Proposition \ref{conditionofpreservation}. 
	
	For instance, the inclusion of the domain
	$ d^1 : \mathsf{1}\to\mathsf{2} $ has the codensity monad. 	More precisely $\ran _{d^1} d^1 $ is given by $\id _ {\mathsf{2}} : \mathsf{2}\to \mathsf{2} $ with the unique $2$-cell (natural
	transformation)
	$d^1\Rightarrow d^1 $. However,
	in this case, $\delta _ {d^1\uparrow d^1} ^0 $ is the inclusion
	$$\xymatrix{
		\mathsf{0}
		\ar[d]
		\ar@{}[rrrd]|-{\mapsto}
		&
		&
		&
		\mathsf{0}
		\ar[d]
		&
		\mathsf{0}'
		\ar[l]
		\ar[d]
		\\
		\mathsf{1}
		&
		&
		&
		\mathsf{1}
		&
		\mathsf{1}'
	}$$
	which does not preserve the terminal object, since $\mathsf{2} $ has terminal object and $d^1\uparrow d^1 $ does not. Hence $\delta _ {d^1\uparrow d^1} ^0 $ does not have a left adjoint. Actually, 
	it even does not have a codensity monad. Therefore the condition of Proposition \ref{conditionofpreservation} does not hold for $ d^1 : \mathsf{1}\to\mathsf{2} $.
	
	It should be noted that 
	$d ^1 $ is left adjoint to $s ^0 $ and, hence, it does satisfy the codual of the condition of Proposition \ref{conditionofpreservation}. 	More precisely, since $d ^1 : \mathsf{1}\to \mathsf{2} $ is a left adjoint functor, 
	it satisfies the hypothesis of \ref{coalgebrasmain} of Theorem \ref{ADJUNCTIONVERSION}. Hence
	the co-semantic factorization (usual factorization
	through the category of coalgebras) coincides with the semantic lax descent factorization  
	of $d ^1 $. These factorizations are given by  $$d ^1 = d^1 \circ \id _ {\mathsf{1} } .$$

	By Proposition \ref{okforrightadjoints}, any right adjoint morphism satisfies
	Proposition \ref{conditionofpreservation}. The converse is false, that is to say, the condition
	of Proposition \ref{conditionofpreservation} does not imply the existence of a left adjoint.	
	There are simple counterexamples in $\Cat $. In order to construct such an example, we observe that:
\begin{lem}\label{lemarepetido}
Let  $\terminall  _ e: e\to \mathsf{1} $ be a functor between a small category $e $ and the terminal category.
We have that $ \ran _ {\terminall  _ e}\terminall  _ e$ and $ \lan _ {\terminall  _ e}\terminall  _ e$  are given by the identity on $\mathsf{1} $. Therefore the semantic factorization 
and the co-semantic factorization are both given by 
$$\terminall  _ e = \id _ {\mathsf{1} }\circ \terminall  _ e .$$
Moreover, $\terminall  _ e  $ has a left adjoint (right adjoint) if and only if $e$ has initial object (terminal object).
\end{lem}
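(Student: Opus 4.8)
The plan is to unwind the universal properties in play, each of which trivialises because the relevant hom-categories are terminal. First I would record that $\Cat[\mathsf{1},\mathsf{1}]$ is the terminal category and that $\Cat(e,\mathsf{1})$ is terminal as well, there being a unique functor $e\to\mathsf{1}$ and a unique natural transformation between any parallel pair out of $e$. Consequently the functor $\Cat(\terminall_e,\mathsf{1})\colon\Cat(\mathsf{1},\mathsf{1})\to\Cat(e,\mathsf{1})$ is an isomorphism, hence both its right and its left reflection are given by the identity functor; that is, $(\id_{\mathsf{1}},\id_{\terminall_e})$ is simultaneously the right Kan extension $\ran_{\terminall_e}\terminall_e$ and the left Kan extension $\lan_{\terminall_e}\terminall_e$. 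In particular $\terminall_e$ has a codensity monad and a density comonad, and by the defining equations in Definition~\ref{definitioncodensitymonadsection} (and its codual) both are the identity monad, respectively identity comonad, on $\mathsf{1}$, since $m$, $\eta$ and their duals are forced to be the unique available $2$-cells.

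Next I would identify the Eilenberg--Moore object of the identity monad on $\mathsf{1}$. For every object $y$ of $\Cat$ the category $\Cat(y,\mathsf{1})$ is terminal, and the Eilenberg--Moore category of the identity monad it carries is $\Cat(y,\mathsf{1})$ itself; this isomorphism is $2$-natural in $y$, so $\mathsf{1}$ is the Eilenberg--Moore object, with $\uu^{\t}=\id_{\mathsf{1}}$ and $\mu^{\t}$ the identity $2$-cell. By the defining property~\eqref{semanticfactorizationofamorphismthathascodensitymonad} the comparison morphism $\terminall_e^{\t}$ is then forced to be $\terminall_e$ itself, so the semantic factorization of $\terminall_e$ is $\terminall_e=\id_{\mathsf{1}}\circ\terminall_e$. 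Running the codual argument with the identity comonad on $\mathsf{1}$ (whose co-Eilenberg--Moore object is again $\mathsf{1}$) yields the co-semantic factorization $\terminall_e=\id_{\mathsf{1}}\circ\terminall_e$ in exactly the same way.

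For the last assertion I would unwind the definition of adjunction directly. A left adjoint to $\terminall_e$ amounts to a functor $\mathsf{1}\to e$, i.e.\ a choice of object $c$ of $e$, together with natural isomorphisms $e(c,x)\cong\mathsf{1}(\terminall_e c,\terminall_e x)$; since the right-hand side is a singleton for every $x$, this says precisely that $c$ is an initial object of $e$. Conversely, an initial object $0$ of $e$ provides the left adjoint $\mathsf{1}\to e$ picking out $0$, with unit the unique map and counit forced. Applying the same reasoning to $\terminall_{e^{\op}}\colon e^{\op}\to\mathsf{1}$ (equivalently, dualising), $\terminall_e$ has a right adjoint if and only if $e$ has a terminal object.

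The argument has essentially no hard step; the only point requiring care is to keep in mind that $\ran$ and $\lan$ here denote the formal $2$-categorical (co)reflections of Section~\ref{importantnotationcodensitymonads}, not pointwise Kan extensions, and that the evidently canonical data really do satisfy the universal properties of the codensity monad, of the Eilenberg--Moore object and of the semantic factorization --- but each such verification collapses at once to the observation that the ambient hom-category is terminal.
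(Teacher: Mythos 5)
Your proof is correct; the paper states this lemma without any proof, treating it as a routine consequence of the fact that the hom-categories $\Cat(e,\mathsf{1})$ and $\Cat(\mathsf{1},\mathsf{1})$ are terminal, which is exactly the observation your argument turns on at every step (for the Kan extensions, the codensity monad and density comonad, the Eilenberg--Moore and co-Eilenberg--Moore objects, and the comparison morphisms). You also correctly read the second occurrence of $\ran_{\terminall_e}\terminall_e$ in the statement as the left Kan extension $\lan_{\terminall_e}\terminall_e$, which is what the claim about the co-semantic factorization requires.
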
	
Since the \textit{thin} category $\mathbb{R} $ corresponding to the usual preordered set of real numbers does not have initial or terminal objects, the only
	functor $\terminall  _ \mathbb{R} : \mathbb{R}\to \mathsf{1} $ does not have any adjoint. However, it is clear
	that every functor $\mathsf{1}\to b $ preserves the (conical) limit of  $\mathbb{R}\to \mathsf{1} $
	and, hence, any such functor  does preserve  $\ran _ {\terminall  _ \mathbb{R}} \terminall  _ \mathbb{R} $. In particular, $\terminall  _ \mathbb{R}  $
	does satisfy Proposition \ref{conditionofpreservation}.
	
	This proves that, although the morphism $\terminall  _ \mathbb{R} : \mathbb{R}\to \mathsf{1} $  does not satisfy any of the versions of Theorem \ref {ADJUNCTIONVERSION},
	it does satisfy the conditions of Theorem \ref{maintheoremofthepaper}. Hence the
	semantic lax descent factorization 
	of $\terminall  _ \mathbb{R} $ (Eq.~\ref{laxdescentfactorizationofthehighercokernel}) coincides with the semantic factorization of $\terminall  _ \mathbb{R} $. In this case, by Lemma \ref{lemarepetido},  both
	factorizations are given by $$\terminall  _ \mathbb{R} = \id _ {\mathsf{1} }\circ \terminall  _ \mathbb{R} .$$

\begin{rem}
Although  (by Lemma \ref{lemarepetido}) the codensity monad and the density comonad 
of $\terminall  _{\mathsf{1}\sqcup \mathsf{1}} $ are the identity on $\mathsf{1} $, the functor $\terminall  _{\mathsf{1}\sqcup \mathsf{1}} $ does not satisfy the condition of Proposition \ref{conditionofpreservation} nor the codual.

In fact,
	a functor $\mathsf{1} \to b $ preserves 
	$\ran _ {\terminall  _{\mathsf{1}\sqcup \mathsf{1}} } \terminall  _{\mathsf{1}\sqcup \mathsf{1}}  $ ($\lan _{\terminall  _{\mathsf{1}\sqcup \mathsf{1}} } \terminall  _{\mathsf{1}\sqcup \mathsf{1}}  $)
	if and only if  $\mathsf{1} \to b $ preserves binary products (binary coproducts) which does happen if and only if the image of $\mathsf{1}\to b $ is a preterminal
	(preinitial) object (see \cite[Remark~4.5]{arXiv:1711.02051} for instance). 
	
	The opcomma category of $\terminall  _{\mathsf{1}\sqcup \mathsf{1}}  $ along itself is the category with two distinct objects and two parallel arrows between them: hence it does
	not have any preterminal or preinitial objects. This shows that neither  $\ran _ {\terminall  _{\mathsf{1}\sqcup \mathsf{1}}}  \terminall  _{\mathsf{1}\sqcup \mathsf{1}}  $ nor $\lan _{\terminall  _{\mathsf{1}\sqcup \mathsf{1}}}  \terminall  _{\mathsf{1}\sqcup \mathsf{1}}  $ is preserved by any functor $\mathsf{1}\to \terminall  _{\mathsf{1}\sqcup \mathsf{1}} \uparrow \terminall  _{\mathsf{1}\sqcup \mathsf{1}}  $.

This proves that the functor $\terminall  _{\mathsf{1}\sqcup \mathsf{1}}  : \mathsf{1}\sqcup \mathsf{1}\to \mathsf{1} $
does not satisfy the hypotheses of Theorem \ref{maintheoremofthepaper}. Since $\Cat $ has Eilenberg-Moore objects, two-dimensional cokernel diagrams  and lax descent objects,
we have the semantic lax descent factorization of $\terminall  _{\mathsf{1}\sqcup \mathsf{1}} $
and the semantic factorization. However, in this case, they do not coincide. More precisely, they are respectively given by the
commutative triangles below.
$$\xymatrix{
\mathsf{1}\sqcup \mathsf{1}
\ar[rr]|-{\terminall  _{\mathsf{1}\sqcup \mathsf{1}}}
\ar[rd]|-{\id _ {\mathsf{1}\sqcup \mathsf{1} }  }
&
&
\mathsf{1}
&
&
\mathsf{1}\sqcup \mathsf{1}
\ar[rr]|-{\terminall  _{\mathsf{1}\sqcup \mathsf{1}}}
\ar[rd]|-{\terminall  _{\mathsf{1}\sqcup \mathsf{1}}}
&
&
\mathsf{1}
\\
&
\mathsf{1}\sqcup \mathsf{1}
\ar[ru]|-{\terminall  _{\mathsf{1}\sqcup \mathsf{1}} }
&
&
&
&
\mathsf{1}
\ar[ru]|-{\id _ {\mathsf{1} }  }
&
}$$

\end{rem}

\section{Monadicity and effective faithful morphisms}

In this section, we show direct consequences of Theorem \ref{maintheoremofthepaper} on monadicity.
Henceforth, whenever a $2$-category $\AAA $ has the two-dimensional cokernel diagram  $\H _ p $ of a morphism $p: e\to b$, we use the notation of 
\ref{subsectiondefinicaodehighercokernel} and \ref{definicaodehighercokernel}. If $\AAA $ has the two-dimensional
kernel diagram of a morphism $l: e\to b $,
we use the notation of Remark \ref{smaenotionoffactorization}.

Recall that a morphism $\mathfrak{p}: \mathfrak{e}\to \mathfrak{b} $ of a $2$-category $\AAA $ is an equivalence if there is are a 
morphism $\mathfrak{l} : \mathfrak{b}\to \mathfrak{e} $ and invertible $2$-cells $\mathfrak{l}\mathfrak{p}\Rightarrow \id _ \mathfrak{e} $,
$\id _ \mathfrak{b} \Rightarrow \mathfrak{p}\mathfrak{l} $. It is a basic coherence result the fact that, whenever we have such a data, we
can actually get an adjunction $\mathfrak{l} \dashv \mathfrak{p} $ and an adjunction $\mathfrak{p}\dashv \mathfrak{l} $ with invertible units and invertible counits. These adjunctions are called \textit{adjoint equivalences}.

\begin{defi}
Let $\AAAA : \Delta _ \mathrm{Str}\to \AAA $ be a $2$-functor.
We say that the pair 
\begin{equation}
\left( p: e\to b, \uppsi : \AAAA (\dd ^1 ) \cdot p\Rightarrow \AAAA (\dd ^0 ) \cdot p  \right),
\end{equation}
in which $p$ is a morphism and $\uppsi $ is a $2$-cell, is \textit{effective} w.r.t.  $\lim ( \mathfrak{D} , \AAAA ) $ if the following statements hold:
\begin{itemize}
\renewcommand\labelitemi{--}
\item $\AAA $ has the lax descent object $\lim ( \mathfrak{D} , \AAAA ) $;
\item the pair $(p, \uppsi )$ satisfies the descent associativity \eqref{Associativityequationdescent} and identity \eqref{Identityequationdescent} w.r.t. $\AAAA $;
\item the induced factorization $p =\dd ^{(\mathfrak{D} , \AAAA )}\circ p^{( \AAAA , \uppsi ) } $
is such that $p^{( \AAAA , \uppsi ) }$ is an equivalence.
\end{itemize}
\end{defi}

\begin{defi}[effective faithful morphism]\label{effectivemonomorphismdefinition}
Let $p:e\to b $ be a morphism of a $2$-category $\AAA $. The morphism $p$ is an 
\textit{effective faithful morphism}
of $\AAA $ if the following statements hold:
\begin{itemize}
\renewcommand\labelitemi{--}
\item  $\AAA $ has the two-dimensional cokernel diagram of $p$;
\item $\AAA $ has the lax descent object of the 
two-dimensional cokernel diagram  $\H _ p $;
\item  the semantic lax descent factorization \eqref{laxdescentfactorizationofthehighercokernel} of $p$, $p =  \dd ^p \circ p ^\H  $, is such that $p ^\H $
is an equivalence, that is to say, $(p, \upalpha ) $ is effective w.r.t. $\H _p $.
\end{itemize}
\end{defi}
\begin{rem}\label{1dimensionalmotivation}
	The terminology of Definition \ref{effectivemonomorphismdefinition} is motivated by the $1$-dimensional case. In a category with
	suitable pushouts and coequalizers, every morphism $p$ has a factorization 
	induced by the equalizer of the ``cokernel pair'' 
	$$
	\xymatrix{
		b
		\ar@<-0.6ex>[rr]
		\ar@<0.6ex>[rr]
		&&
		b\sqcup _ e b
	}
	$$
	of $p$. If the morphism $p$ is itself the equalizer, $p$ is said to be an  \textit{effective monomorphism}.
\end{rem}

\begin{rem}
The concept of \textit{effective faithful morphism} was already considered in \cite[pag.~142]{MR1314469} under the name \textit{effective descent morphism}. The terminology of \cite[pag.~142]{MR1314469} does not agree with the usual terminology of Grothendieck descent theory within the context of \cite{MR1466540, 2016arXiv160604999L}. Since the present work is intended to be applied to the context of \cite{2016arXiv160604999L}, we adopt the alternative terminology.
\end{rem}

\begin{rem}
If a morphism $p: e\to b $ of a $2$-category $\AAA $ is effective w.r.t. any  
$2$-functor $\AAAA : \Delta _ \mathrm{Str}\to \AAA $, it is clear that $p$ is faithful.
More precisely, in this case, for any object $x$, $\AAA (x, p ): \AAA (x, e)\to \AAA(x,b)$ 
is faithful.
In particular, if $p$ is an effective faithful morphism, then $p$ is faithful. 
\end{rem}

By Remark \ref{smaenotionoffactorization},  the codual of \eqref{laxdescentfactorizationofthehighercokernel}
of a morphism $p$
gives the same factorization, provided that it exists. Hence, we have:

\begin{lem}[Self-coduality]\label{selfcodual}
Let $p $ be a morphism of a $2$-category $\AAA $. The morphism $p$ is an effective faithful morphism in $\AAA $
if and only if the morphism corresponding to $p$ is an effective faithful morphism in $\AAA ^\co $.
\end{lem}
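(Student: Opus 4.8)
The claim is that $p$ is an effective faithful morphism in $\AAA$ if and only if the corresponding morphism is an effective faithful morphism in $\AAA^{\co}$. The whole point, as flagged by Remark \ref{smaenotionoffactorization}, is that the lax descent factorization \eqref{laxdescentfactorizationofthehighercokernel} is self-codual: passing to $\AAA^{\co}$ sends the two-dimensional cokernel diagram $\H_p$ to the two-dimensional kernel diagram of $p$ viewed in $\AAA^{\co}$, and sends the lax descent object to the lax codescent object; but the \emph{factorization} one extracts is literally the same triangle $p = \dd^p\circ p^{\H}$. So the strategy is to unwind the three bullet conditions in Definition \ref{effectivemonomorphismdefinition} and check each is invariant under $(-)^{\co}$.

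First I would spell out what the two-dimensional cokernel diagram of $p$ becomes under $\co$-duality. The opcomma object $b\uparrow_p b$ is the colimit weighted by $P$ of $p$ along $p$; under $(-)^{\co}$, which inverts $2$-cells, a colimit weighted by $P$ in $\AAA$ is precisely a colimit weighted by $P^{\co}$ in $\AAA^{\co}$, but $P$ (being a weight landing in $\Cat$, acted on by $(-)^{\co}$ on $\Cat$) is such that $P^{\co}$ gives again an opcomma-type weight — one should check that $\co$-dualizing the weight $\mathfrak{D}$ for the lax descent object yields the weight for the lax codescent object, and similarly for $P$. The cleanest way is to observe that $\co$-duality on $\Cat$ is an involution and that the defining $2$-natural isomorphism $\AAA(y,\lim(\mathfrak D,\BBBB))\cong\lim(\mathfrak D,\AAA(y,\BBBB-))$ becomes, after $\co$, the defining property of $\lim(\mathfrak D^{\co},\BBBB^{\co})$ in $\AAA^{\co}$; and $\mathfrak D^{\co}$ is the weight computing the lax codescent object (this is exactly the content alluded to in Remark \ref{smaenotionoffactorization} where the $2$-dimensional kernel diagram reuses $\Delta_{\mathrm{Str}}^{\op}$ and the codescent object). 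Hence: $\AAA$ has $\H_p$ iff $\AAA^{\co}$ has the two-dimensional kernel diagram of the corresponding morphism; and $\AAA$ has $\lim(\mathfrak D,\H_p)$ iff $\AAA^{\co}$ has the lax codescent object of that kernel diagram. These handle the first two bullets.

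For the third bullet, I would note that the universal $2$-cell $\upalpha$ of $b\uparrow_p b$ and the universal data $(\dd^p,\Psi^p)$ of the lax descent object are, under $(-)^{\co}$, precisely the corresponding universal data in $\AAA^{\co}$ (with $2$-cells reversed). The factorization $p=\dd^p\circ p^{\H}$ is built purely from composition of $1$-cells and universal properties; since $(-)^{\co}$ fixes $1$-cells and their composition, the comparison morphism $p^{\H}$ in $\AAA$ literally \emph{is} the comparison morphism in $\AAA^{\co}$ for the codual factorization. Finally, $p^{\H}$ is an equivalence in $\AAA$ iff it is an equivalence in $\AAA^{\co}$, because the notion of equivalence (existence of a quasi-inverse $1$-cell together with invertible $2$-cells both ways) is manifestly invariant under inverting the direction of $2$-cells — an invertible $2$-cell stays invertible. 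Combining the three equivalences gives the statement.

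The main obstacle is not conceptual depth but bookkeeping: one must be careful that $\co$-dualizing the explicit weights $P$ and $\mathfrak D$ (with their structural $2$-cells $\sigma_{ij}$, $\mathfrak n_i$, $\overline S$, etc.) really produces the weights defining opcomma/comma and descent/codescent objects, rather than something subtly off by an $\op$-twist; in particular one should confirm that it is the $\co$-dual (inverting $2$-cells), and not the $\coop$-dual, that is relevant here, matching the ``self-codual, not self-dual'' emphasis of the paper. I would treat this as a routine but explicit verification, citing the identification of $\Delta_{\mathrm{Str}}^{\op}$ and the codescent weight already used tacitly in Remark \ref{smaenotionoffactorization}, and otherwise lean on the functoriality of $(-)^{\co}$ on $2$-categories together with uniqueness of weighted (co)limits.
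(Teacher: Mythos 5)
Your overall strategy (check that each of the three clauses of Definition \ref{effectivemonomorphismdefinition} is invariant under $(-)^{\co}$ and that the comparison $1$-cell is literally the same) is the right one, and your treatment of the third clause is fine. But the key identification in the middle of your argument is wrong, and wrong in a way that would break the proof if taken at face value. You claim that passing to $\AAA^{\co}$ sends the two-dimensional cokernel diagram to the two-dimensional kernel diagram and the lax descent object to the lax codescent object. That is what the \emph{dual} $(-)^{\op}$ does: the kernel diagram is built from comma objects and pullbacks (limits), and the codescent object is a colimit, so these arise by reversing $1$-cells, not $2$-cells. The codual $(-)^{\co}$ fixes $1$-cells, hence keeps colimits as colimits and limits as limits; it sends the opcomma object $b\uparrow_p b$, whose universal $2$-cell is $\upalpha:\d^1 p\Rightarrow \d^0 p$, to an opcomma object again, only with the universal $2$-cell reversed --- that is, to the same object with the roles of $\d^0$ and $\d^1$ (and of $\D^0$ and $\D^2$) interchanged. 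Correspondingly, a descent datum for the resulting diagram in $\AAA^{\co}$ is, after the interchange, a $2$-cell $\d^0 h\Rightarrow\d^1 h$ of $\AAA^{\co}$, i.e.\ a $2$-cell $\d^1 h\Rightarrow \d^0 h$ of $\AAA$: the reversal of the $2$-cells is exactly absorbed by the swap of the two legs, so the lax descent object and the factorization $p=\dd^{p}\circ p^{\H}$ are \emph{literally the same} in $\AAA$ and in $\AAA^{\co}$. This is the content of Remark \ref{smaenotionoffactorization}, and it is precisely what the paper's one-line proof invokes.

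The gap matters logically, not just terminologically: Definition \ref{effectivemonomorphismdefinition} applied in $\AAA^{\co}$ asks that $\AAA^{\co}$ have the two-dimensional \emph{cokernel} diagram of $p$ and its lax \emph{descent} object. If your identification were correct, your first two bullet points would instead establish that $\AAA^{\co}$ has a kernel diagram and a codescent object, which is not what the definition requires, and the stated equivalence would not follow --- it would instead relate effective faithfulness in $\AAA$ to the $\op$-dual notion (effective op-faithfulness). Once you replace your identification by the correct one (codualization preserves each ingredient up to the $\d^0\leftrightarrow\d^1$ interchange), the rest of your argument, including the observation that equivalences and the comparison $1$-cell $p^{\H}$ are untouched by $(-)^{\co}$, goes through and reproduces the paper's proof.
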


\begin{defi}[Duality: effective op-faithful morphism]
Let $p: e\to b $ be a morphism of a $2$-category $\AAA $. 
The morphism $p$ is an \textit{effective op-faithful morphism} of $\AAA $ 
if the morphism corresponding to $p$ is an effective faithful morphism in $\AAA ^\op $.
\end{defi}

\begin{defi}[Monadicity, comonadicity, Kleisli morphism]\label{Definitionmonadicity}
Let $p: e\to b $ be a morphism of a $2$-category $\AAA $. We say that $p$ is \textit{monadic} if the following
statements hold:
\begin{itemize}
\renewcommand\labelitemi{--}
\item $p$ has a codensity monad $\t = (t, m, \eta ) $;
\item $\AAA $ has the Eilenberg-Moore object of $\t $;
\item the semantic factorization of  $p= \uu ^\t\circ p^\t $ is such that $p^\t $ is an equivalence.
\end{itemize}
Dually, $l: b\to e $ is a \textit{Kleisli morphism} if the corresponding morphism in $\AAA ^\op $ is monadic, while $l$
is \textit{comonadic} if its corresponding morphism in $\AAA ^\co $ is monadic. 
\end{defi}

By Theorem \ref{maintheoremofthepaper} and its dual versions, we get the following characterizations of monadicity, comonadicity and Kleisli morphisms:

\begin{coro}[Monadicity theorem]\label{FirstMonadicity}
Assume that $\AAA $ has the two-dimensional cokernel diagram  of a morphism $p: e\to b $. 
\begin{enumerate}
\item provided that $\ran _p p $ exists and is preserved by $\d ^0 $, $p$ is monadic if and only if $p$ is an effective faithful morphism;
\item provided that $\lan _p p $ exists and is preserved by $\d ^1 $, $p$ is comonadic if and only if $p$ is an effective faithful morphism.
\end{enumerate}
\end{coro}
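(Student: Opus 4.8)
The plan is to read both equivalences directly off Theorem~\ref{maintheoremofthepaper} and its codual Theorem~\ref{codualmaintheoremofthepaper}, the only remaining work being to match up the universal properties involved. First I would unwind the two notions. By Definition~\ref{Definitionmonadicity}, under the standing assumption that $\ran_p p$ exists, $p$ is monadic precisely when $\AAA$ has the Eilenberg-Moore object $b^\t$ of the codensity monad $\t$ of $p$ and the comparison $p^\t$ in the semantic factorization~\eqref{semanticfactorizationofamorphismthathascodensitymonad} is an equivalence. By Definition~\ref{effectivemonomorphismdefinition}, since $\AAA$ is assumed to have the two-dimensional cokernel diagram $\H_p$ of $p$, the morphism $p$ is an effective faithful morphism precisely when $\AAA$ has the lax descent object $\lim(\mathfrak{D},\H_p)$ and the comparison $p^\H$ in the semantic lax descent factorization~\eqref{laxdescentfactorizationofthehighercokernel} is an equivalence.

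For part~(1), I would then apply Theorem~\ref{maintheoremofthepaper}, whose hypotheses ($\AAA$ has $\H_p$, and $\ran_p p$ exists and is preserved by $\d^0$) are exactly those of part~(1): the semantic factorization of $p$ is isomorphic to the semantic lax descent factorization of $p$, either one existing if the other does. The ``either one existing if the other does'' clause makes the existence of $b^\t$ equivalent to the existence of $\lim(\mathfrak{D},\H_p)$; and when both exist, the isomorphism of factorizations provides an invertible $1$-cell $\phi$ between the two intermediate objects compatible with both legs of each factorization, so that $p^\t$ agrees, up to this isomorphism, with $\phi\circ p^\H$. Since pre- or post-composing an equivalence with an invertible $1$-cell again yields an equivalence, $p^\t$ is an equivalence if and only if $p^\H$ is, and combining this with the unwinding above yields part~(1).

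Part~(2) is the codual statement, argued symmetrically. By Definition~\ref{Definitionmonadicity} together with the duality dictionary, $p$ is comonadic precisely when the co-Eilenberg-Moore object of the density comonad $\lan_p p$ exists and the comparison of the co-semantic factorization of $p$ is an equivalence; Theorem~\ref{codualmaintheoremofthepaper}, whose hypotheses ($\AAA$ has $\H_p$, and $\lan_p p$ exists and is preserved by $\d^1$) coincide with those of part~(2), identifies this co-semantic factorization with the semantic lax descent factorization of $p$, either one existing if the other does. Running the same transport-of-equivalences argument as in part~(1), and recalling that ``effective faithful morphism'' is self-codual by Lemma~\ref{selfcodual}, one obtains part~(2). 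I expect no genuine obstacle here: all the substance lies in Theorem~\ref{principal}, Theorem~\ref{maintheoremofthepaper} and their duals, and the remaining care is the purely formal observation that an isomorphism of factorizations preserves the property of the comparison $1$-cell being an equivalence.
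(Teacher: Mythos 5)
Your proposal is correct and follows exactly the route of the paper's own (much terser) proof: part (1) is read off from Theorem \ref{maintheoremofthepaper} together with the definitions, and part (2) is its codualization via Theorem \ref{codualmaintheoremofthepaper}, Lemma \ref{selfcodual} and the self-coduality observation of Remark \ref{smaenotionoffactorization}. The only difference is that you spell out the transport-of-equivalence step that the paper leaves implicit.
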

\begin{proof}
The first result follows immediately from the definitions and from Theorem \ref{maintheoremofthepaper}. The second one is just its codualization
(see Lemma \ref{selfcodual}, Theorem \ref{codualmaintheoremofthepaper}  and Remark \ref{smaenotionoffactorization}). 
\end{proof}

\begin{coro}[Characterization of Kleisli morphisms]\label{FirstKleisli}
Assume that $\AAA $ has the higher kernel of a morphism $l: b\to e $. 
\begin{enumerate}
\item assuming that $\rlift _l l $ exists and is respected by $\d _0^{l\downarrow l} $, $l$ is a Kleisli morphism if and only if $l$ is an effective op-faithful morphism;
\item assuming that $\llift _p p $ exists and is respected by $\d _1^{l\downarrow l} $, $l$ is comonadic if and only if $l$ is an effective op-faithful morphism.
\end{enumerate}
\end{coro}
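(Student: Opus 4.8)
The plan is to deduce both items from Corollary~\ref{FirstMonadicity} by reading that corollary inside the dual $2$-category $\AAA^\op$, exactly as the second item of Corollary~\ref{FirstMonadicity} was itself deduced from the first by codualization. So the first thing I would do is set up the translation dictionary between the notions in $\AAA$ and in $\AAA^\op$. A morphism $l:b\to e$ of $\AAA$ is a morphism of $\AAA^\op$, and its two-dimensional cokernel diagram computed in $\AAA^\op$ --- which is built out of opcomma objects and pushouts of $\AAA^\op$, i.e.\ comma objects and pullbacks of $\AAA$ --- is precisely the two-dimensional kernel diagram $\H^l$ (the higher kernel) of $l$ in $\AAA$ (Remark~\ref{smaenotionoffactorization}). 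By the duality between right Kan extensions and right liftings, the right Kan extension $\ran_l l$ taken in $\AAA^\op$ is the right lifting $\rlift_l l$ taken in $\AAA$, and similarly $\lan_l l$ in $\AAA^\op$ is $\llift_l l$ in $\AAA$; moreover the hypothesis that such a Kan extension be preserved by the universal $1$-cell $\d^0$ (resp.\ $\d^1$) of the opcomma object $l\uparrow l$ of $\AAA^\op$ translates, under the duality between preserving Kan extensions and respecting liftings, into the hypothesis that the corresponding lifting be respected by the comma-object projection $\d_0^{l\downarrow l}$ (resp.\ $\d_1^{l\downarrow l}$) of $\AAA$. Finally, ``$l$ monadic in $\AAA^\op$'' unwinds to ``$l$ is a Kleisli morphism of $\AAA$'' and ``$l$ an effective faithful morphism of $\AAA^\op$'' unwinds to ``$l$ an effective op-faithful morphism of $\AAA$'', both directly from the definitions (Definition~\ref{Definitionmonadicity} and the definition of effective op-faithful morphism).

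With the dictionary in place, I would simply invoke Corollary~\ref{FirstMonadicity} in $\AAA^\op$: its first item, applied to $l$, yields item~(i) verbatim, and its second item (equivalently, its codualization, via the self-coduality of effective faithful morphisms recorded in Lemma~\ref{selfcodual}), applied to $l$ with $\ran$ and $\d^0$ replaced by $\lan$ and $\d^1$, yields item~(ii). Since Corollary~\ref{FirstMonadicity} is itself a consequence of the Main Theorem~\ref{maintheoremofthepaper}, no further $2$-categorical input is required.

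The only step that requires real attention --- rather than a genuine obstacle --- is the bookkeeping of the four dualities: one must check that the sub-/superscripts $0,1$ on the universal morphisms of (op)comma objects transpose the way claimed under $(-)^\op$, which ultimately reduces to how the codomain- and domain-inclusions $d^0,d^1$ of $\mathsf 2$ (the data defining the weight $P$ for opcomma objects in \ref{OPCOMMAOBJECTSSECTIONWEIGHTEDLIMITS}) behave when $1$-cells are reversed, and that the notions ``monadic'', ``comonadic'' and ``Kleisli morphism'' are each matched with the correct one of the four duals of Theorem~\ref{maintheoremofthepaper}. All of this is routine given Section~\ref{MainTheoremsSection} and the duality remarks collected there, and it introduces nothing new.
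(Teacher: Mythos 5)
Your proposal is correct and is exactly the argument the paper intends: Corollary~\ref{FirstKleisli} is stated without proof as the $\op$-dual of Corollary~\ref{FirstMonadicity} (equivalently, an instance of Theorem~\ref{dualmaintheoremofthepaper}), and your dictionary between opcomma objects/pushouts, $\ran$/preservation, and monadicity in $\AAA^\op$ and comma objects/pullbacks, $\rlift$/respecting, and Kleisli morphisms in $\AAA$ is precisely the bookkeeping that justifies it. The only caveat, which concerns the paper's statement rather than your argument, is that the literal $\op$-dual of item (2) of Corollary~\ref{FirstMonadicity} yields ``co-Kleisli'' (monadic in $\AAA^{\coop}$) rather than ``comonadic'', consistent with Corollary~\ref{KelisliCLEAN}.
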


It is a well known fact that, whenever a morphism is monadic in a $2$-category $\AAA $, it has a left adjoint. In our setting, if $p $ is monadic as in Definition \ref{Definitionmonadicity}, the existence of a left adjoint follows from: (1) 
since $p^\t $ is an equivalence, it has a left adjoint;
(2)  $\uu ^\t $ has always a left adjoint induced by the underlying morphism of the monad $ t: b\to b $, the multiplication $m: t^2\Rightarrow t $
and the universal property of $b ^\t $; and (3) composition of right adjoint morphisms is right adjoint. 
From this fact and Theorem \ref{ADJUNCTIONVERSION}, we get cleaner versions of our monadicity results:

\begin{coro}[Monadicity theorem]\label{MONADICITYCLEAN}
Assume that the $2$-category $\AAA $ has the two-dimensional cokernel diagram  of a morphism $p$. 
\begin{enumerate}
\item The morphism $p$ is monadic if and only if $p$ has a left adjoint and $p$ is an effective faithful morphism.
\item The morphism $p$ is comonadic if and only if $p$ has a right adjoint and $p$ is an effective faithful morphism.
\end{enumerate}
\end{coro}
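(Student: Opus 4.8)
The plan is to deduce this cleaner monadicity characterization by combining the earlier Corollary \ref{FirstMonadicity} with the Dubuc-Street adjoint-functor theorem (Theorem \ref{DUBUCSTREET}), using the observation made just before the statement that a monadic morphism always has a left adjoint. First I would treat part (1). For the ``only if'' direction, suppose $p$ is monadic in the sense of Definition \ref{Definitionmonadicity}. Then $p$ has a codensity monad $\t$, the Eilenberg-Moore object $b^\t$ exists, and $p^\t$ is an equivalence. As recalled in the paragraph preceding the statement, $p = \uu^\t\circ p^\t$ is then a composite of right adjoint morphisms (the equivalence $p^\t$ has a left adjoint, $\uu^\t$ has a left adjoint via the universal property of $b^\t$, and right adjoints compose), so $p$ has a left adjoint. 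Moreover, with a left adjoint in hand, Theorem \ref{DUBUCSTREET} (together with Proposition \ref{okforrightadjoints}) tells us that $\ran_p p$ exists and is absolute, hence in particular preserved by $\d^0$; so the hypotheses of Corollary \ref{FirstMonadicity}(1) are met and monadicity of $p$ yields that $p$ is an effective faithful morphism.

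For the ``if'' direction of part (1), suppose $p$ has a left adjoint and is an effective faithful morphism. By the Dubuc-Street theorem again, the existence of a left adjoint guarantees that $\ran_p p$ exists and is absolute, so it is preserved by $\d^0 : b\to b\uparrow_p b$; thus the proviso of Corollary \ref{FirstMonadicity}(1) is satisfied. Since $p$ is an effective faithful morphism, Corollary \ref{FirstMonadicity}(1) gives that $p$ is monadic. This closes the equivalence in (1).

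Part (2) is then obtained by codualization, exactly as in the proof of Corollary \ref{FirstMonadicity}: apply part (1) in the $2$-category $\AAA^\co$, using Lemma \ref{selfcodual} (self-coduality of the notion of effective faithful morphism) to transport the effective-faithful hypothesis, the fact that ``$p$ has a right adjoint in $\AAA$'' corresponds to ``$p$ has a left adjoint in $\AAA^\co$'', and the fact that comonadicity in $\AAA$ is by definition monadicity in $\AAA^\co$. The existence of the two-dimensional cokernel diagram of $p$ in $\AAA$ is likewise the existence of the two-dimensional cokernel diagram of the corresponding morphism in $\AAA^\co$ (cf.\ Remark \ref{smaenotionoffactorization} on the self-coduality of the construction), so the hypothesis carries over, and (2) follows.

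I do not expect any serious obstacle here: the statement is essentially a repackaging of Corollary \ref{FirstMonadicity} once one feeds in Theorem \ref{DUBUCSTREET} to automatically verify the preservation proviso in the presence of an adjoint. The only point requiring a little care is making explicit that a monadic morphism has a left adjoint, which is precisely the three-step argument ((1) equivalences have left adjoints; (2) $\uu^\t$ has a left adjoint; (3) right adjoints compose) indicated in the paragraph before the corollary; everything else is formal dualization.
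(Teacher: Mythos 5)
Your proposal is correct and follows essentially the same route as the paper: the paper deduces the corollary from the observation that monadic morphisms have left adjoints (via the same three-step argument you give) together with Theorem \ref{ADJUNCTIONVERSION}, which is itself just the Main Theorem combined with Proposition \ref{okforrightadjoints}. Your variant of invoking Corollary \ref{FirstMonadicity} plus Dubuc--Street to verify the preservation proviso is the same reduction packaged slightly differently, and the codualization for part (2) matches the paper's treatment.
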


\begin{coro}[Characterization of Kleisli morphisms]\label{KelisliCLEAN}
Assume that the $2$-category $\AAA $ has the two-dimensional kernel diagram  of a morphism $l$. 
\begin{enumerate}
\item The morphism $l$ is a co-Kleisli morphism if and only if $l$ has a left adjoint and $l$ is an effective op-faithful morphism.
\item The morphism $l$ is Kleisli morphism if and only if $l$ has a right adjoint and $l$ is an effective op-faithful morphism.
\end{enumerate}
\end{coro}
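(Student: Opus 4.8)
The plan is to derive both items from Corollary~\ref{MONADICITYCLEAN} by applying it inside the dual $2$-category $\AAA^{\op}$; the only work is translating the vocabulary across the duality. First I would set up the dictionary. By the definitions of this section, a morphism $l$ of $\AAA$ is a \emph{Kleisli morphism} exactly when the corresponding morphism is monadic in $\AAA^{\op}$, and a \emph{co-Kleisli morphism} exactly when it is monadic in $\AAA^{\coop}=(\AAA^{\op})^{\co}$, i.e.\ comonadic in $\AAA^{\op}$; likewise $l$ is \emph{effective op-faithful} in $\AAA$ exactly when the corresponding morphism is effective faithful in $\AAA^{\op}$. Inverting the direction of $1$-cells interchanges left and right adjoints, and, by Remark~\ref{smaenotionoffactorization}, interchanges the two-dimensional cokernel diagram and the two-dimensional kernel diagram; hence ``$\AAA$ has the two-dimensional kernel diagram of $l$'' means exactly ``$\AAA^{\op}$ has the two-dimensional cokernel diagram of $l$''.

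For item~(2), I would apply item~(1) of Corollary~\ref{MONADICITYCLEAN} in $\AAA^{\op}$ to the morphism corresponding to $l$: under the standing hypothesis that $\AAA$ has the two-dimensional kernel diagram of $l$ (equivalently, $\AAA^{\op}$ has the two-dimensional cokernel diagram of $l$), this morphism is monadic in $\AAA^{\op}$ if and only if it has a left adjoint in $\AAA^{\op}$ and is an effective faithful morphism of $\AAA^{\op}$; rereading each clause in $\AAA$ via the dictionary yields precisely that $l$ is a Kleisli morphism if and only if $l$ has a right adjoint and is an effective op-faithful morphism. For item~(1), I would instead apply item~(2) of Corollary~\ref{MONADICITYCLEAN} in $\AAA^{\op}$: the corresponding morphism is comonadic in $\AAA^{\op}$ if and only if it has a right adjoint in $\AAA^{\op}$ and is effective faithful in $\AAA^{\op}$, which is exactly the statement that $l$ is a co-Kleisli morphism of $\AAA$ if and only if $l$ has a left adjoint and is an effective op-faithful morphism.

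Alternatively, one can argue within $\AAA$: dualizing the observation preceding Corollary~\ref{MONADICITYCLEAN}, every Kleisli morphism has a right adjoint and every co-Kleisli morphism has a left adjoint; by the dual of Theorem~\ref{DUBUCSTREET} a right (left) adjoint morphism automatically satisfies the condition on $\rlift_l l$ ($\llift_l l$) appearing in Corollary~\ref{FirstKleisli}, and Theorem~\ref{ADJUNCTIONVERSION}(2),(4) identifies the semantic lax codescent factorization of a right, respectively left, adjoint with the usual Kleisli, respectively co-Kleisli, factorization, from which the characterizations can be read off. The one point needing care in either approach is the bookkeeping of the four $2$-categorical dualities --- in particular that $\op$ sends ``monadic'' to ``Kleisli'' but ``comonadic'' to ``co-Kleisli'' and ``left adjoint'' to ``right adjoint''; there is no actual computation involved.
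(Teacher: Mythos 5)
Your proposal is correct and follows essentially the same route as the paper: the paper derives this corollary (together with Corollary \ref{MONADICITYCLEAN}) from the observation that (co)monadic and (co-)Kleisli morphisms automatically possess the relevant adjoint, combined with Theorem \ref{ADJUNCTIONVERSION}, which is exactly your second argument, and your primary argument --- dualizing Corollary \ref{MONADICITYCLEAN} into $\AAA^{\op}$ --- is a formally equivalent repackaging of the same facts. The duality dictionary you set up (kernel/cokernel diagram, left/right adjoints, monadic/Kleisli versus comonadic/co-Kleisli, effective faithful versus effective op-faithful) is the correct one.
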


\begin{rem}[Monadicity vs comonadicity]
It should be noted that, unlike Beck's monadicity theorem in $\Cat $, the condition to get monadicity
from a right adjoint morphism coincides with the condition to get comonadicity from a left adjoint morphism; namely, to be an effective faithful morphism.
Of course, as a consequence, we get that, under the conditions of Corollary \ref{MONADICITYCLEAN},
if the morphism $p$ has a left and a right adjoint morphism, the following statements are equivalent:
\begin{enumerate}[label=\roman*)]
\item $p$ is an effective faithful morphism;
\item $p$ is monadic;
\item $p$ is comonadic.
\end{enumerate} 
\end{rem}

\begin{rem}[Beck's monadicity theorem vs formal monadicity theorem]\label{whyitisnotthesamething}
Beck's monadicity theorem states that, in $\Cat $, a functor $p$ is monadic if and only if $p$ has a left adjoint and $p$ creates absolute coequalizers.
By our monadicity theorem, we can conclude that,\textit{ provided that a functor $p:e\to b $ has a left adjoint, $p$ creates absolute coequalizers if and only if 
$p$ is an effective faithful morphism in $\Cat $}. 

However, the effective faithful morphisms in $\Cat $ are not characterized by the property of creation of
absolute coequalizers. For instance, this follows from the fact that, by Lemma \ref{selfcodual}, the concept of effective faithful morphism
is self codual, while the property of creation of absolute coequalizers is not self dual. 

More precisely, one of the  key aspects of duality in $1$-dimensional category theory 
is that the usual $2$-functor $\op $ given by
\begin{eqnarray*}
\op : & \Cat ^\co & \to \Cat  \\
& e & \mapsto e^\op\\
& p: e\to b &\mapsto p ^\op : e^\op\to b^\op \\
&\beta & \mapsto \beta ^\op
\end{eqnarray*}
is invertible. Therefore a functor $p ^\op : e ^\op\to b ^\op $ is an effective faithful morphism in $\Cat $ if and only if 
the morphism $p: e\to b $ is an effective faithful morphism in $\Cat ^\co $. Moreover, by Lemma \ref{selfcodual}, the morphism $p$  is
an effective faithful morphism in $\Cat ^\co $ if and only if the corresponding morphism (functor) $p$
 is an effective faithful morphism in $\Cat $.  Hence, by abuse of notation, $p$ is an effective faithful morphism in $\Cat $ if and only if
$p ^\op $ is an effective faithful morphism in $\Cat $. 
 
It is clear that a functor $p: e\to b $ creates absolute coequalizers if and only if the functor corresponding to $p^\op : e^\op\to b ^\op $ 
creates absolute equalizers. Since there are functors that create absolute coequalizers but do not create absolute equalizers, the property
of creation of absolute equalizers is not self dual. It follows, then,
that there are functors that do create absolute coequalizers but are not effective faithful morphisms.

For instance, consider the usual forgetful functor between the category of free groups and the category of sets.
This functor reflects isomorphisms and preserves  equalizers: hence, since
the category of free groups has equalizers, this forgetful functor creates all equalizers. However, since it has a left adjoint, it does not create absolute
coequalizers and it is not an effective faithful morphism in $\Cat $ (otherwise, it would be monadic). Therefore the image of the morphism corresponding to this
functor in $\Cat  ^\co $ by $\op $ is a functor that creates absolute coequalizers
but it is not an effective faithful morphism in $\Cat $.
\end{rem}

\begin{rem}[Characterization of effective faithful morphisms]\label{characterizationsuggestion}
	In \cite{MR1314469}, Zawadowski gave the characterizations of effective faithful morphisms in some special $2$-categories. Moreover, as a consequence of proof of \cite[Proposition~3.1]{MR2107402}, the effective op-faithful morphisms  in $\Cat $ are
	precisely the functors that are essentially surjective on objects.
\end{rem}

\section*{Acknowledgments}
I am grateful to the \textit{Coimbra Category Theory Group}  for their unfailing support. I am also thankful to
the \textit{Software Technology Group} at Utrecht University 
for their welcoming and supportive environment. 

This work was realized during my postdoctoral fellowship at the Centre for Mathematics, University of Coimbra, in 2018.  My ideas and results for this project were positively influenced by the peaceful  and inspiring atmosphere I found there. 

I want to give special thanks to Marino Gran and Tim Van der Linden, who warmly hosted me at \textit{Universit\'{e} catholique de Louvain} in May 2018, 
where my ideas and work for this project started. In my stay there, I found a very encouraging and productive environment.

Maria Manuel Clementino made my stay at \textit{UCLouvain} even more memorable, allowing me to work with her on exciting problems about Janelidze-Galois Theory.
My discussions on examples of codensity monads with her and Eduardo Dubuc, whom I also wish to thank, were instrumental in coming up with counterexamples to some tempting but wrong strengthening of some of the present paper's results.

After publishing this work, I have received several questions  and suggestions for future work, terminology and bibliography. In these directions, I am specially thankful to George Janelidze,  Zurab Janelidze and Steve Lack.

I want to thank Eduardo Ochs for taking his valuable time to teach me about his \texttt{Dednat6}~\LaTeX~package. During the revision of the present paper, \texttt{Dednat6} proved very efficient to write and  edit diagrams quickly.

\bibliographystyle{plain}
\bibliography{references}

\pu

\end{document}
